\newtheorem{definition}{Definition}[section]
\newtheorem{theorem}[definition]{Theorem}
\newtheorem{lemma}[definition]{Lemma}
\newtheorem{corollary}[definition]{Corollary}
\newtheorem{proposition}[definition]{Proposition}
\newtheorem{notation}[definition]{Notation}
\def\K{\mathbb K}
\def\X{\mathfrak X}
\renewcommand{\tilde}{\widetilde}
\renewcommand{\epsilon}{\varepsilon}
\begin{document}
\title{\bf
Cauchy Pairs and Cauchy Matrices
}
\author{
Alison Gordon Lynch}
\date{}

\maketitle
\begin{abstract}
 Let $\K$ denote a field and let $\X$ denote a finite non-empty set. Let $\text{Mat}_\X(\K)$ denote the $\K$-algebra consisting of the matrices with entries in $\K$ and rows and columns indexed by $\X$. A matrix $C \in \text{Mat}_\X(\K)$ is called {\it Cauchy} whenever there exist mutually distinct scalars $\{x_i\}_{i \in \X}, \{\tilde{x}_i\}_{i \in \X}$ from $\K$ such that $C_{ij} = (x_i - \tilde{x}_j)^{-1}$ for $i, j \in \X$.  In this paper, we give a linear algebraic characterization of a Cauchy matrix. To do so, we introduce the notion of a Cauchy pair.  A Cauchy pair is an ordered pair of diagonalizable linear transformations $(X, \tilde{X})$ on a finite-dimensional vector space $V$ such that $X-\tilde{X}$ has rank 1 and such that there does not exist a proper subspace $W$ of $V$ such that $X W \subseteq W$ and $\tilde{X} W \subseteq W$. Let $V$ denote a vector space over $\K$ with dimension $|\X|$. We show that for every Cauchy pair $(X, \tilde{X})$ on $V$, there exists an $X$-eigenbasis $\{v_i\}_{i \in \X}$ for $V$ and an $\tilde{X}$-eigenbasis $\{w_i\}_{i \in \X}$ for $V$ such that the transition matrix from $\{v_i\}_{i \in \X}$ to $\{w_i\}_{i \in \X}$ is Cauchy. We show that every Cauchy matrix arises as a transition matrix for a Cauchy pair in this way. We give a bijection between the set of equivalence classes of Cauchy pairs on $V$ and the set of permutation equivalence classes of Cauchy matrices in $\text{Mat}_\X(\K)$.

\bigskip
\noindent
{\bf Keywords}:
Cauchy pair, Cauchy matrix.
\hfil\break
\noindent {\bf 2010 Mathematics Subject Classification}: 15A04.
 \end{abstract}
\section{Introduction}

Throughout this paper, let $\K$ denote a field and let $\X$ denote a finite non-empty set. Let $\text{Mat}_\X(\K)$ denote the $\K$-algebra consisting of the matrices with entries in $\K$ and rows and columns indexed by $\X$.

\medskip
We recall the notion of a Cauchy matrix.
\begin{definition}\label{def:cm}
\rm
A matrix $C \in \text{Mat}_{\X}(\K)$ is called {\it Cauchy} whenever there exist mutually distinct scalars $\{x_i\}_{i \in \X}, \{\tilde{x}_i\}_{i\in\X}$ from $\K$ such that the $(i,j)$-entry of $C$ is
\begin{equation}\label{cauchymatrix}
    C_{ij} = \frac{1}{x_i - \tilde{x}_j} \qquad  (i,j\in \X).
\end{equation}
\end{definition}
Cauchy matrices have been studied in the context of rational interpolation problems \cite{gow1992}, efficient algorithms for solving systems of linear equations \cite{Pan2000}, error-correcting codes \cite{blomer1995}, \cite{schindelhauer2013}, and signal processing \cite{Bojanczyk1995}.

\medskip
In this paper, we give a linear algebraic characterization of a Cauchy matrix.  To do so, we introduce a linear algebraic object called a Cauchy pair. Roughly speaking, a Cauchy pair is an ordered pair of diagonalizable linear transformations $(X, \tilde{X})$ on a finite-dimensional vector space such that $X-\tilde{X}$ has rank 1 (see Definition \ref{def:cp} for the precise definition). We show that there exists (in a sense we will make precise) a 1-1 correspondence between Cauchy pairs and Cauchy matrices.

\medskip
We now describe our results in detail. Let $V$ denote a vector space over $\K$ with dimension $|\X|$ and let $(X, \tilde{X})$ be a Cauchy pair on $V$.  We show that each eigenspace of $X$ and each eigenspace of $\tilde{X}$ has dimension 1. Moreover, we show that $X$ and $\tilde{X}$ have no common eigenvalues.  Let $\{x_i\}_{i \in \X}$ (resp. $\{\tilde{x}_i\}_{i \in \X}$) denote the eigenvalues of $X$ (resp. $\tilde{X}$).  We say that $(\{x_i\}_{i \in \X}, \{\tilde{x}_i\}_{i \in \X})$ is {\it eigenvalue data} for $(X, \tilde{X})$. Let $C \in \text{Mat}_\X(\K)$ be the matrix with $(i,j)$-entry $(x_i - \tilde{x}_j)^{-1}$ for $i,j \in \X$. Note that $C$ is Cauchy. Therefore $C$ is invertible \cite[p. 38]{Horn2012}. We show that $C$ is the transition matrix from an $X$-eigenbasis for $V$ to an $\tilde{X}$-eigenbasis for $V$.

\medskip
We just obtained a Cauchy matrix from a Cauchy pair. We will show that every Cauchy matrix arises in this way. For any mutually distinct scalars $\{x_i\}_{i \in \X}, \{\tilde{x}_i\}_{i \in \X}$ from $\K$, we show that there exists a Cauchy pair $(X, \tilde{X})$ on $V$ with eigenvalue data $(\{x_i\}_{i \in \X}, \{\tilde{x}_i\}_{i \in \X})$.  Then the Cauchy matrix with $(i,j)$-entry $(x_i-\tilde{x}_j)^{-1}$ for $i, j \in \X$ is a transition matrix from an $X$-eigenbasis for $V$ to an $\tilde{X}$-eigenbasis for $V$.

\medskip
We now give a more detailed description of the role that a Cauchy matrix plays as a transition matrix for a Cauchy pair. Let $(X, \tilde{X})$ be a Cauchy pair on $V$ with eigenvalue data $(\{x_i\}_{i \in \X}, \{\tilde{x}_i\}_{i \in \X})$. We show that there exists an $X$-eigenbasis (resp. $\tilde{X}$-eigenbasis) for $V$ such that the sum of the basis elements is contained in the 1-dimensional subspace $(X-\tilde{X})V$. We call such a basis {\it $X$-standard} (resp. {\it $\tilde{X}$-standard}). We show that there exists a nonzero bilinear form $\langle \ , \ \rangle : V \times V \to \K$ such that $\langle X v, w\rangle = \langle v, Xw\rangle$ and $\langle \tilde{X} v, w \rangle = \langle v, \tilde{X}w \rangle$ for all $v, w \in V$. We call such a form {\it $(X, \tilde{X})$-invariant}.  We show that any $(X, \tilde{X})$-invariant form is symmetric, non-degenerate, and unique up to multiplication by a nonzero scalar in $\K$.  We show that the elements of an $X$-standard basis (resp. $\tilde{X}$-standard basis) for $V$ are mutually orthogonal with respect any $(X, \tilde{X})$-invariant form.  We define a {\it dual X-standard basis} (resp. {\it dual $\tilde{X}$-standard basis}) to be a basis for $V$ that is dual to an $X$-standard basis (resp. $\tilde{X}$-standard basis) for $V$ with respect to an $(X, \tilde{X})$-invariant form.  We compute the inner products and transition matrices between $X$-standard, $\tilde{X}$-standard, dual $X$-standard, and dual $\tilde{X}$-standard bases. We comment on one of the transition matrices. Let $\{v_i\}_{i \in \X}$ be an $X$-standard basis for $V$ and let $\{w_i\}_{i \in \X}$ be a dual $\tilde{X}$-standard basis for $V$. We show that the transition matrix from $\{v_i\}_{i \in \X}$ to $\{w_i\}_{i \in \X}$ is a nonzero scalar multiple of the Cauchy matrix with $(i,j)$-entry $(x_i - \tilde{x}_j)^{-1}$ for $i, j \in \X$.

\medskip
Below Definition \ref{def:cm}, we mentioned a 1-1 correspondence between Cauchy pairs and Cauchy matrices. We now make this correspondence precise. We say that Cauchy pairs $(X, \tilde{X})$ and $(Y, \tilde{Y})$ are {\it equivalent} whenever $(X, \tilde{X})$ is isomorphic to the Cauchy pair $(Y + \zeta I, \tilde{Y} + \zeta I)$ for some $\zeta \in \K$. We say that matrices $M$ and $N$ are {\it permutation equivalent} whenever there exist permutation matrices $P$ and $Q$ such that $M = PNQ$.  In our main result, we give a bijection between the following two sets:
\begin{itemize}
\item The equivalence classes of Cauchy pairs on $V$.
\item The permutation equivalence classes of Cauchy matrices in $\text{Mat}_\X(\K)$.
\end{itemize}
The bijection sends a Cauchy pair $(X, \tilde{X})$ to the Cauchy matrix with $(i,j)$-entry $(x_i - \tilde{x}_j)^{-1}$ for $i, j \in \X$, where $(\{x_i\}_{i \in \X}, \{\tilde{x}_i\}_{i \in \X})$ is eigenvalue data for $(X, \tilde{X})$.

\medskip
The paper is organized as follows. In Section 2 we discuss some preliminaries. In Sections 3,4 we define Cauchy pairs and discuss their properties and relationships. In Section 5 we discuss how Cauchy matrices are related to Cauchy pairs. In Sections 6--9 we discuss $X$-standard and $\tilde{X}$-standard bases, related scalars, and transition matrices. In Section 10 we introduce an $(X, \tilde{X})$-invariant bilinear form and investigate its properties. Moreover, we compute inner products for $X$-standard bases, $\tilde{X}$-standard bases, and their duals.  In Section 11 we compute the transition matrices between $X$-standard bases, $\tilde{X}$-standard bases, and their duals. In Section 12 we summarize our results on inner products and transition matrices in terms of matrices. In Section 13 we describe the bijective correspondence between Cauchy pairs and Cauchy matrices.

\section{Preliminaries}

In this section, we consider a system of linear equations that has connections to both Cauchy matrices and Cauchy pairs.  Let $\{a_i\}_{i\in\X}, \{b_i\}_{i\in\X}$ denote mutually distinct scalars from $\K$.  We consider the following system of linear equations in variables $\{\lambda_i\}_{i \in \X}$:
\begin{equation}\label{eq:system}
\sum_{i \in \X} \frac{\lambda_i}{a_i - b_j} = 1, \qquad j \in \X.
\end{equation}

We review the solution to $(\ref{eq:system})$. To do so, we recall the Lagrange polynomials \cite{Atkinson1989}.  Let $\lambda$ denote an indeterminate and let $\K[\lambda]$ denote the $\K$-algebra of polynomials in $\lambda$ that have all coefficients in $\K$.
\begin{definition}\label{def:lagrange}\rm \cite[p.132]{Atkinson1989}
\rm
Let $\{c_i\}_{i\in\X}, \{d_i\}_{i\in\X}$ denote scalars from $\K$ such that $\{c_i\}_{i \in \X}$ are mutually distinct.  There exists a unique polynomial $L \in \K[\lambda]$ of degree at most $|\X|-1$ such that $L(c_i) = d_i$ for $i \in \X$.  We call $L$ the {\it Lagrange polynomial} for $(\{c_i\}_{i \in \X}, \{d_i\}_{i \in \X})$.
\end{definition}

The following result is well known.
\begin{lemma}\label{lem:lagrange}{\rm\cite[p.134]{Atkinson1989}}
Let $\{c_i\}_{i\in\X}, \{d_i\}_{i\in\X}$ denote scalars from $\K$ such that $\{c_i\}_{i \in \X}$ are mutually distinct and let $L$ be the Lagrange polynomial for $(\{c_i\}_{i\in\X}, \{d_i\}_{i\in\X})$.  Then
\[
    L(\lambda) = \sum_{i \in \X} d_i \,\prod_{\substack{k \in \X \setminus i}} \frac{\lambda - c_k}{c_i - c_k}.
\]
\end{lemma}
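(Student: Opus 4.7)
My plan is to verify that the polynomial on the right-hand side satisfies both defining properties of the Lagrange polynomial from Definition \ref{def:lagrange}, and then invoke the uniqueness clause of that definition to conclude equality.

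First I would introduce, for each $i \in \X$, the auxiliary polynomial
\[
\ell_i(\lambda) = \prod_{k \in \X \setminus i} \frac{\lambda - c_k}{c_i - c_k},
\]
so that the claimed formula reads $L(\lambda) = \sum_{i \in \X} d_i \, \ell_i(\lambda)$. Each $\ell_i$ is a product of $|\X|-1$ linear factors (the denominators are nonzero scalars because the $c_k$ are mutually distinct), hence $\ell_i \in \K[\lambda]$ has degree exactly $|\X|-1$. Therefore $\sum_{i \in \X} d_i \, \ell_i(\lambda)$ is a polynomial in $\K[\lambda]$ of degree at most $|\X|-1$.

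Next I would check the interpolation condition at each $c_j$. For $i, j \in \X$ with $i \ne j$, one of the factors in $\ell_i(c_j)$ is $(c_j - c_j)/(c_i - c_j) = 0$, so $\ell_i(c_j) = 0$. For $i = j$, each factor in $\ell_j(c_j)$ equals $1$, so $\ell_j(c_j) = 1$. Hence $\ell_i(c_j) = \delta_{ij}$ and
\[
\sum_{i \in \X} d_i \, \ell_i(c_j) = d_j \qquad (j \in \X).
\]
Thus $\sum_{i \in \X} d_i \, \ell_i$ is a polynomial of degree at most $|\X|-1$ that takes the value $d_j$ at $c_j$ for every $j \in \X$. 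By the uniqueness asserted in Definition \ref{def:lagrange}, it must coincide with $L$, which gives the claimed formula.

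There is no genuine obstacle: the only subtlety is the well-definedness of the denominators $c_i - c_k$, which is immediate from the distinctness hypothesis, and the rest is the standard ``Kronecker delta'' evaluation of the Lagrange basis polynomials $\ell_i$.
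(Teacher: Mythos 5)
Your proof is correct and is the standard argument: show that each Lagrange basis polynomial $\ell_i$ has degree $|\X|-1$ and satisfies $\ell_i(c_j)=\delta_{ij}$, then invoke the uniqueness clause of Definition \ref{def:lagrange}. The paper itself supplies no proof for this lemma, citing it as well known, so your write-up simply fills in the routine verification the author omitted.
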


\begin{lemma}\label{lem:lagrcoeff}
Let $\{c_i\}_{i\in\X}, \{d_i\}_{i\in\X}$ denote scalars from $\K$ such that $\{c_i\}_{i \in \X}$ are mutually distinct, and let $L$ be the Lagrange polynomial for $(\{c_i\}_{i\in\X}, \{d_i\}_{i\in\X})$.  Then the coefficient of $\lambda^{|\X|-1}$ in $L$ is
\[
\sum_{i \in \X} d_i \prod_{k \in \X \setminus i}\frac{1}{c_i - c_k}.
\]
\end{lemma}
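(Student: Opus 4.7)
The plan is to apply Lemma \ref{lem:lagrange} directly and read off the leading coefficient term by term. By Lemma \ref{lem:lagrange},
\[
L(\lambda) = \sum_{i \in \X} d_i \prod_{k \in \X \setminus i} \frac{\lambda - c_k}{c_i - c_k}.
\]
For each fixed $i \in \X$, the $i$th summand is a polynomial in $\lambda$ of degree exactly $|\X|-1$, since the numerator $\prod_{k \in \X \setminus i}(\lambda - c_k)$ is monic of degree $|\X|-1$ and the denominator $\prod_{k \in \X \setminus i}(c_i - c_k)$ is a nonzero scalar (nonzero because the $\{c_i\}_{i\in\X}$ are mutually distinct).

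Thus to extract the coefficient of $\lambda^{|\X|-1}$ in $L$, I would simply extract the leading coefficient from each summand and sum. The leading coefficient of $\prod_{k \in \X \setminus i}(\lambda - c_k)$ is $1$, so the leading coefficient of the $i$th summand is $d_i \prod_{k \in \X \setminus i}(c_i - c_k)^{-1}$. Summing over $i \in \X$ gives the stated expression.

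There is no genuine obstacle here: the lemma is essentially a one-line corollary of Lemma \ref{lem:lagrange}. The only point worth checking carefully is that the $i$th summand really has degree $|\X|-1$ rather than something smaller (so that different summands do not secretly cancel at the top degree before we identify the leading coefficient), and this is immediate from the monicity of $\prod_{k \in \X \setminus i}(\lambda - c_k)$.
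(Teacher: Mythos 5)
Your proposal is correct and matches the paper's proof, which simply states that the lemma is immediate from Lemma \ref{lem:lagrange}; you have just written out the term-by-term extraction of the leading coefficient explicitly. (One tiny remark: the $i$th summand has degree exactly $|\X|-1$ only when $d_i \ne 0$, but this does not matter since coefficient extraction is linear.)
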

\begin{proof}
Immediate from Lemma \ref{lem:lagrange}.
\end{proof}

\begin{lemma}\label{lem:lambdas}
For $i \in \X$, define $A_i \in \K$ by
\begin{equation}\label{eq:lambdas}
    A_i = \frac{\prod_{k \in \X} (a_i - b_k)}{\prod_{k \in \X \setminus i} (a_i - a_k)}.
\end{equation}
Then $\{A_i\}_{i \in \X}$ is the unique solution of the system of linear equations (\ref{eq:system}).
\end{lemma}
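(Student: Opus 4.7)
The plan is to prove existence and uniqueness of the solution separately. Existence will follow from a Lagrange-interpolation argument combined with a polynomial identification; uniqueness comes from a standard polynomial-degree argument. Since $(\ref{eq:system})$ is a square linear system, existence of one solution together with uniqueness of solutions to the homogeneous system is sufficient.

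For existence, I would let $L \in \K[\lambda]$ be the Lagrange polynomial for $(\{a_i\}_{i \in \X}, \{\prod_{k \in \X}(a_i - b_k)\}_{i \in \X})$, so that by Definition \ref{def:lagrange} we have $L(a_i) = \prod_{k}(a_i - b_k)$ for each $i \in \X$ and $\deg L \leq |\X| - 1$. Then $\prod_{k}(\lambda - b_k) - L(\lambda)$ is a monic polynomial of degree $|\X|$ in $\lambda$ that vanishes at each of the $|\X|$ distinct points $\{a_i\}_{i \in \X}$, so it must equal $\prod_{k}(\lambda - a_k)$. Setting $\lambda = b_j$ gives $L(b_j) = -\prod_{k}(b_j - a_k)$. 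On the other hand, Lemma \ref{lem:lagrange} yields
$$L(b_j) \;=\; \sum_{i \in \X} \frac{\prod_{k \in \X}(a_i - b_k)}{\prod_{k \in \X \setminus i}(a_i - a_k)} \prod_{k \in \X \setminus i}(b_j - a_k) \;=\; \sum_{i \in \X} A_i \prod_{k \in \X \setminus i}(b_j - a_k).$$
Since $\prod_{k \in \X \setminus i}(b_j - a_k) = \prod_{k \in \X}(b_j - a_k)/(b_j - a_i)$ and $\prod_{k \in \X}(b_j - a_k) \neq 0$ (all scalars are mutually distinct), dividing through by this product gives $\sum_{i \in \X} A_i/(a_i - b_j) = 1$, as required.

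For uniqueness, suppose $\{\mu_i\}_{i \in \X}$ satisfies the homogeneous system $\sum_{i} \mu_i/(a_i - b_j) = 0$ for every $j \in \X$. Multiplying by $\prod_{k \in \X}(a_k - b_j)$ turns this into the assertion that the polynomial $P(\mu) = \sum_{i \in \X} \mu_i \prod_{k \in \X \setminus i}(a_k - \mu) \in \K[\mu]$, which has degree at most $|\X| - 1$, vanishes at each of the $|\X|$ distinct scalars $b_j$. Hence $P \equiv 0$. Evaluating $P$ at $\mu = a_\ell$ annihilates every term with $i \neq \ell$ (since the factor $a_\ell - a_\ell$ appears in that term), yielding $\mu_\ell \prod_{k \in \X \setminus \ell}(a_k - a_\ell) = 0$, so $\mu_\ell = 0$ for each $\ell \in \X$. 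The main obstacle is the existence half, specifically the identification of $\prod_{k}(\lambda - b_k) - L(\lambda)$ with $\prod_{k}(\lambda - a_k)$; the uniqueness half is routine once $P(\mu)$ is set up, since the structure of $P$ makes the scalars $\mu_\ell$ readable off from evaluations at $\mu = a_\ell$.
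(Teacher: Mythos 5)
Your proof is correct, but it takes a genuinely different route from the paper's on both halves. For uniqueness, the paper simply observes that the coefficient matrix of (\ref{eq:system}) is Cauchy and invokes the known invertibility of Cauchy matrices (cited from Horn--Johnson); your polynomial argument for the homogeneous system is self-contained and in effect reproves that nonsingularity from scratch. For existence, the paper fixes $j$, cancels the factor $a_i-b_j$ from each term, and recognizes the resulting sum (via Lemma \ref{lem:lagrcoeff}) as the coefficient of $\lambda^{|\X|-1}$ in the Lagrange interpolant of the degree-$(|\X|-1)$ polynomial $\prod_{k\in\X\setminus j}(\lambda-b_k)$ at the nodes $\{a_i\}_{i\in\X}$, which is monic, so the sum is $1$. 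You instead interpolate the degree-$|\X|$ polynomial $\prod_{k\in\X}(\lambda-b_k)$ at the same nodes, identify the interpolation error as $\prod_{k\in\X}(\lambda-a_k)$, and evaluate at $\lambda=b_j$; this is exactly the partial-fraction identity $\prod_k(\lambda-b_k)/\prod_k(\lambda-a_k)=1+\sum_i A_i/(\lambda-a_i)$ specialized at $b_j$. What each buys: the paper's argument is shorter given the external fact about Cauchy matrices and avoids any division or sign bookkeeping (your final step quietly absorbs a sign when dividing by $\prod_k(b_j-a_k)$, which does work out); yours is fully self-contained and makes the structural reason for the formula (\ref{eq:lambdas}) --- the partial-fraction expansion --- more transparent. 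Both arguments are valid.
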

\begin{proof}
First, observe that the matrix of coefficients for the system of linear equations (\ref{eq:system}) is Cauchy and hence it is invertible.  So if $\{A_i\}_{i \in \X}$ is a solution to (\ref{eq:system}), then it is the unique solution.

\medskip
We now show that $\{A_i\}_{i \in \X}$ is a solution to (\ref{eq:system}). Fix $j \in \X$. We show that $\{A_i\}_{i\in \X}$ satisfies the $j$th equation of (\ref{eq:system}). That is, we show that
\begin{equation}\label{eq:lambdasum}
\sum_{i \in \X} \frac{\prod_{k \in \X \setminus j}(a_i - b_k)}{\prod_{k \in \X \setminus i} (a_i - a_k)} = 1.
\end{equation}
For $i \in \X$, let $d_i = \prod_{k \in \X\setminus j} (a_i - b_k)$. Let $L(\lambda)$ be the Lagrange polynomial for $(\{a_i\}_{i \in \X}, \{d_i\}_{i \in \X})$. By Lemma \ref{lem:lagrcoeff}, the coefficient of $\lambda^{|\X|-1}$ in $L(\lambda)$ is equal to the left side of (\ref{eq:lambdasum}). Thus, we can evaluate the left side of (\ref{eq:lambdasum}) by computing the coefficient of $\lambda^{|\X|-1}$ in $L(\lambda)$.

\medskip
Let $f(\lambda) = \prod_{k \in \X \setminus j}(\lambda-b_k)$.  Then $f(\lambda)$ has degree $|\X|-1$ and $f(a_i) = d_i$ for $i \in \X$.  By Definition \ref{def:lagrange}, this implies that $L(\lambda) = f(\lambda)$.  Thus, $L(\lambda)$ is monic of degree $|\X|-1$, so the coefficient of $\lambda^{|\X|-1}$ in $L(\lambda)$ is 1.  Therefore (\ref{eq:lambdasum}) holds and the result follows.
\end{proof}

We mention one significance of the system of linear equations (\ref{eq:system}). In order to describe this significance, we first give a definition and a lemma.

\begin{definition}\label{def:data}
\rm
Let $C \in \text{Mat}_{\X}(\K)$ be Cauchy and let $\{x_i\}_{i \in \X}, \{\tilde{x}_i\}_{i \in \X}$ denote mutually distinct scalars from $\K$.  We say that $(\{x_i\}_{i\in\X}, \{\tilde{x}_i\}_{i\in\X})$ is {\it data} for $C$ whenever $\{x_i\}_{i \in \X}, \{\tilde{x}_i\}_{i \in \X}$ satisfy $(\ref{cauchymatrix})$.
\end{definition}

\noindent It follows from Definition \ref{def:cm} and Definition \ref{def:data} that any two data for a Cauchy matrix are related in the following way.
\begin{lemma}\label{lem:equivdata}
Let $C \in \rm{Mat}_{\X}(\K)$ be Cauchy and let $(\{x_i\}_{i \in \X}, \{\tilde{x}_i\}_{i \in \X})$ be data for $C$.  Then $(\{y_i\}_{i \in \X}, \{\tilde{y}_i\}_{i \in \X})$ is data for $C$ if and only if the following are equal and independent of $i$ for $i \in \X$:
\[
    x_i - y_i, \qquad \tilde{x}_i - \tilde{y}_i.
\]
\end{lemma}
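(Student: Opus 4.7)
The plan is a direct algebraic manipulation of the defining relation $C_{ij} = (x_i - \tilde{x}_j)^{-1}$, handling the two directions of the equivalence separately. No deeper machinery is needed; the key observation is just that a quantity depending on $i$ alone equals a quantity depending on $j$ alone iff both are constant.

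For the forward direction, I would assume that both $(\{x_i\}_{i\in\X}, \{\tilde{x}_i\}_{i\in\X})$ and $(\{y_i\}_{i\in\X}, \{\tilde{y}_i\}_{i\in\X})$ are data for $C$. By Definition \ref{def:data}, for every $i,j \in \X$ we then have $(x_i - \tilde{x}_j)^{-1} = (y_i - \tilde{y}_j)^{-1}$, and inverting yields $x_i - \tilde{x}_j = y_i - \tilde{y}_j$. Rearranging gives $x_i - y_i = \tilde{x}_j - \tilde{y}_j$ for all $i,j \in \X$. The left-hand side depends only on $i$ and the right-hand side depends only on $j$, so both sides equal a common scalar $\zeta \in \K$, independent of $i$ and $j$. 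In particular, $x_i - y_i = \zeta$ and (taking $j = i$) $\tilde{x}_i - \tilde{y}_i = \zeta$ for every $i \in \X$, which is exactly the claimed condition.

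For the converse direction, I would assume there exists $\zeta \in \K$ such that $x_i - y_i = \zeta$ and $\tilde{x}_i - \tilde{y}_i = \zeta$ for every $i \in \X$. Then for any $i, j \in \X$,
\[
    y_i - \tilde{y}_j = (x_i - \zeta) - (\tilde{x}_j - \zeta) = x_i - \tilde{x}_j,
\]
and since $(\{x_i\}_{i\in\X}, \{\tilde{x}_i\}_{i\in\X})$ is data for $C$, the right-hand side is nonzero and its reciprocal equals $C_{ij}$. The scalars $\{y_i\}_{i\in\X}$ are mutually distinct because the $\{x_i\}_{i\in\X}$ are, and similarly for $\{\tilde{y}_i\}_{i\in\X}$. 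Therefore $(\{y_i\}_{i\in\X}, \{\tilde{y}_i\}_{i\in\X})$ satisfies (\ref{cauchymatrix}) with respect to $C$ and is thus data for $C$ by Definition \ref{def:data}.

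I do not anticipate any real obstacle; the only subtlety is remembering to invoke the "depends only on $i$ vs. only on $j$" argument to extract the common constant $\zeta$, and to note that mutual distinctness of the shifted scalars is automatic from mutual distinctness of the original scalars.
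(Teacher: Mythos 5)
Your proof is correct and is exactly the direct verification the paper has in mind: its own proof of this lemma is simply the word ``Routine.'' Both directions are handled cleanly, including the ``depends only on $i$ versus only on $j$'' step that extracts the common shift $\zeta$ and the observation that translating all scalars by $\zeta$ preserves mutual distinctness.
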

\begin{proof}
Routine.
\end{proof}

Let $C \in \text{Mat}_{\X}(\K)$ be Cauchy. For $i\in \X$, let $\mu_i$ denote the $i$th column sum of $C^{-1}$ and let $\tilde{\mu}_i$ denote the $i$th row sum of $C^{-1}$. Let $J \in \text{Mat}_\X(\K)$ denote the all-ones matrix and consider the matrices $C^{-1}J$ and $JC^{-1}$.  Observe that, for each $l \in \X$, $\mu_i = (JC^{-1})_{li}$ and $\tilde{\mu}_i = (C^{-1}J)_{il}$.
Let $(\{x_i\}_{i \in \X}, \{\tilde{x}_i\}_{i \in \X})$ be data for $C$. Using the fact that $C(C^{-1}J) = J$ and $(JC^{-1})C = J$, it is easily verified that $\{\mu_i\}_{i \in \X}$ and $\{\tilde{\mu}_i\}_{i \in \X}$ satisfy the following equations for each $j \in \X$:
\begin{equation}\label{eq:sums}
    \sum_{i \in \X} \frac{\mu_i}{x_i - \tilde{x}_j} = 1, \qquad \sum_{i \in \X} \frac{-\tilde{\mu}_i}{\tilde{x}_i - x_j} = 1.
\end{equation}
\begin{corollary}\label{cor:CinvJ}
Let $C \in \rm{Mat}_\X(\K)$ be Cauchy and let $(\{x_i\}_{i \in \X}, \{\tilde{x}_i\}_{i \in \X})$ be data for $C$.  Then for $i, j \in \X$, the following {\rm(i),(ii)} hold.
\begin{enumerate}
 \item[{\rm (i)}] The $i$th column sum of $C^{-1}$ is equal to $\dfrac{\prod_{k\in\X}(x_i - \tilde{x}_k)}{\prod_{k \in \X\setminus i} (x_i - x_k)}$.
 \item[{\rm (ii)}]The $i$th row sum of $C^{-1}$ is equal to $- \dfrac{\prod_{k\in\X}(\tilde{x}_i - x_k)}{\prod_{k \in \X\setminus i} (\tilde{x}_i - \tilde{x}_k)}$.
\end{enumerate}
\end{corollary}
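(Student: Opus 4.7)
The plan is essentially to read off both statements as two applications of Lemma \ref{lem:lambdas} to the two systems of equations already displayed in (\ref{eq:sums}).

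For part (i), I would observe that (\ref{eq:sums}) asserts that the column sums $\{\mu_i\}_{i \in \X}$ of $C^{-1}$ solve the system
\[
  \sum_{i \in \X} \frac{\mu_i}{x_i - \tilde{x}_j} = 1 \qquad (j \in \X),
\]
which is exactly the system (\ref{eq:system}) with the substitution $a_i = x_i$ and $b_i = \tilde{x}_i$. Lemma \ref{lem:lambdas} guarantees that this system has the unique solution
\[
  \mu_i = \frac{\prod_{k \in \X}(x_i - \tilde{x}_k)}{\prod_{k \in \X \setminus i}(x_i - x_k)},
\]
which is precisely the claim of (i).

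For part (ii), I would apply Lemma \ref{lem:lambdas} to the second half of (\ref{eq:sums}), which reads
\[
  \sum_{i \in \X} \frac{-\tilde{\mu}_i}{\tilde{x}_i - x_j} = 1 \qquad (j \in \X).
\]
Here the roles of the scalar families are swapped: the substitution is $a_i = \tilde{x}_i$, $b_i = x_i$, and the unknowns are the $-\tilde{\mu}_i$. Lemma \ref{lem:lambdas} then yields
\[
  -\tilde{\mu}_i = \frac{\prod_{k \in \X}(\tilde{x}_i - x_k)}{\prod_{k \in \X \setminus i}(\tilde{x}_i - \tilde{x}_k)},
\]
and a sign flip gives the formula in (ii).

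There is no real obstacle here; the only thing to check carefully is that the substitutions $a_i, b_i$ fit the hypotheses of Lemma \ref{lem:lambdas}, namely that $\{x_i\}_{i \in \X}$ and $\{\tilde{x}_i\}_{i \in \X}$ each consist of mutually distinct scalars and that the two families are disjoint from each other so that the denominators in (\ref{eq:sums}) are nonzero. Both facts are immediate from the definition of data for a Cauchy matrix (Definition \ref{def:data}).
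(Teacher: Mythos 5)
Your proposal is correct and is exactly the paper's argument: the paper proves this corollary simply "By Lemma \ref{lem:lambdas} and (\ref{eq:sums})," and you have spelled out the same two substitutions ($a_i = x_i$, $b_i = \tilde{x}_i$ with unknowns $\mu_i$, and $a_i = \tilde{x}_i$, $b_i = x_i$ with unknowns $-\tilde{\mu}_i$) together with the routine verification of the hypotheses.
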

\begin{proof}
By Lemma \ref{lem:lambdas} and (\ref{eq:sums}).
\end{proof}
\section{Cauchy pairs}\label{sec:cp}
For the rest of the paper, fix a vector space $V$ over $\K$ with dimension $|\X|$.  Let End($V$) denote the $\K$-algebra consisting of all $\K$-linear transformations from $V$ to $V$.  For $S \in \text{End}(V)$ and $W \subseteq V$, we call $W$ an {\it eigenspace} of $S$ whenever $W \ne 0$ and there exists $\theta \in \K$ such that $W = \{v \in V | Sv = \theta v\}$.  In this case, $\theta$ is called the {\it eigenvalue} of $S$ corresponding to $W$.  We say that $S$ is {\it diagonalizable} whenever $V$ is spanned by the eigenspaces of $S$.  We say that $S$ is {\it multiplicity-free} whenever $S$ is diagonalizable and each eigenspace of $S$ has dimension 1.

\begin{definition}\label{def:cp}
\rm
By a {\it Cauchy pair} on $V$, we mean an ordered pair $(X,\tilde{X})$ of elements in End($V$) that satisfy the following three conditions.
\begin{enumerate}
 \item Each of $X, \tilde{X}$ is diagonalizable.
 \item $X-\tilde{X}$ has rank 1.
 \item There does not exist a subspace $W$ of $V$ such that $XW \subseteq W, \tilde{X} W \subseteq W, W \ne 0, W \ne V$.
\end{enumerate}

We call $V$ the {\it underlying vector space} for $(X, \tilde{X})$.  We say that $(X, \tilde{X})$ is {\it over} $\mathbb{K}$.
\end{definition}

\noindent The following result is immediate from Definition \ref{def:cp}.
\begin{lemma}\label{lem:dual}
If $(X,\tilde{X})$ is a Cauchy pair on $V$, then $(\tilde{X},X)$ is a Cauchy pair on $V$.
\end{lemma}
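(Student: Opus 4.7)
The plan is to verify directly that the three defining conditions of a Cauchy pair from Definition \ref{def:cp} are all invariant under swapping the two components of the pair, so no real work is needed beyond reading off the symmetry.

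First I would note that condition (i), diagonalizability of each of $X$ and $\tilde{X}$, is manifestly symmetric in the two operators: it is just a conjunction of two statements, one about each component, so it holds for $(\tilde{X}, X)$ as soon as it holds for $(X, \tilde{X})$. Similarly, condition (iii), the non-existence of a proper nonzero subspace $W$ with $XW \subseteq W$ and $\tilde{X} W \subseteq W$, is a symmetric condition on the unordered pair $\{X, \tilde{X}\}$, hence carries over unchanged.

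The only condition that is not literally symmetric on its face is (ii), but it reduces to a triviality: since $\tilde{X} - X = -(X - \tilde{X})$ and scalar multiplication by a nonzero element of $\K$ preserves rank, $\tilde{X} - X$ has rank $1$ whenever $X - \tilde{X}$ does. Combining these three observations gives that $(\tilde{X}, X)$ satisfies the defining conditions and is therefore a Cauchy pair on $V$.

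There is no real obstacle here; the lemma is a bookkeeping remark recording a symmetry of the definition, and the proof is essentially a single sentence per condition.
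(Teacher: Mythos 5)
Your proof is correct and matches the paper's treatment: the paper simply declares the lemma immediate from Definition \ref{def:cp}, and your condition-by-condition check (including the observation that $\tilde{X}-X = -(X-\tilde{X})$ has the same rank) is exactly the verification being left implicit.
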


For the rest of the paper, we adopt the following notational convention.

\begin{definition}\label{def:tilde}
\rm
 For a Cauchy pair $(X, \tilde{X})$ on $V$ and for any object $f$ that we associate with $(X, \tilde{X})$, let $\tilde{f}$ denote the corresponding object for the Cauchy pair $(\tilde{X}, X)$.
\end{definition}

For a Cauchy pair $(X, \tilde{X})$ on $V$, we will need the following facts about the eigenvalues and eigenspaces of $X$ and $\tilde{X}$.

\begin{lemma}
Let $(X, \tilde{X})$ denote a Cauchy pair on $V$.  Then the eigenvalues of $X$ and $\tilde{X}$ are contained in $\K$.
\end{lemma}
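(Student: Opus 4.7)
My plan is to unwind the definitions given in Section~3 and reduce the claim to a standard fact about diagonalizable transformations. The only content here is that, although the phrase "eigenvalues of $X$" might naively suggest roots of the characteristic polynomial over the algebraic closure $\overline{\K}$, the hypothesis that $X$ is diagonalizable (in the sense defined just before Definition~\ref{def:cp}) forces these to lie in $\K$.

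Concretely, by Definition~\ref{def:cp}(i), $X$ is diagonalizable, which by the definition recalled in Section~\ref{sec:cp} means that $V$ is spanned by the eigenspaces of $X$. Each such eigenspace $W$ is, by definition, of the form $\{v \in V : Xv = \theta v\}$ for some $\theta \in \K$. I would then invoke the standard fact that eigenspaces corresponding to distinct eigenvalues are linearly independent, so that $V$ decomposes as a direct sum
\[
V \;=\; \bigoplus_{\theta} W_\theta,
\]
where $\theta$ ranges over a finite set of scalars in $\K$ and each $W_\theta$ is the $\theta$-eigenspace of $X$.

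Choosing a basis of $V$ obtained by concatenating bases of the $W_\theta$, the matrix of $X$ in this basis is diagonal with diagonal entries in $\K$. Consequently, the characteristic polynomial of $X$ factors as $\det(\lambda I - X) = \prod_{\theta}(\lambda - \theta)^{\dim W_\theta}$, which splits completely over $\K$. Thus every root of the characteristic polynomial lies in $\K$, which is exactly the assertion that all eigenvalues of $X$ are in $\K$. The same argument applies verbatim to $\tilde{X}$ (or, alternatively, I would simply invoke Lemma~\ref{lem:dual} together with Definition~\ref{def:tilde} to swap the roles of $X$ and $\tilde{X}$).

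There is essentially no obstacle: the result is a direct consequence of the convention in Section~\ref{sec:cp} that "eigenvalue" refers to a scalar in $\K$, combined with the diagonalizability hypothesis built into Definition~\ref{def:cp}. The only thing to verify is that this convention is compatible with the characteristic-polynomial notion of eigenvalue, which is handled by the diagonal-matrix argument above.
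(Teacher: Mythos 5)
Your proof is correct and takes essentially the same approach as the paper, whose entire proof is ``Follows from Definition \ref{def:cp}(i)''; you have simply spelled out why diagonalizability (in the sense of Section \ref{sec:cp}, where eigenspaces are by definition attached to scalars in $\K$) forces the characteristic polynomial to split over $\K$. No gaps.
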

\begin{proof}
Follows from Definition \ref{def:cp}(i).
\end{proof}

\begin{theorem}\label{thm:commoneval}
Let $(X, \tilde{X})$ denote a Cauchy pair on $V$.  Then there does not exist a scalar in $\K$ that is both an eigenvalue for $X$ and an eigenvalue for $\tilde{X}$.
\end{theorem}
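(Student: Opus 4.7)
The plan is to argue by contradiction: suppose that some $\theta \in \K$ is an eigenvalue of both $X$ and $\tilde{X}$, with corresponding eigenspaces $U$ and $\tilde{U}$. I would first note that $U \cap \tilde{U}$ is stable under both $X$ and $\tilde{X}$, so by condition (iii) of Definition \ref{def:cp} it equals $0$ or $V$; the case $V$ forces $X = \tilde{X} = \theta I$ and contradicts the rank-$1$ assumption (ii), so $U \cap \tilde{U} = 0$.

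Next, I would set $Y = X - \tilde{X}$ and write $Yv = \phi(v)\rho$, where $\rho$ spans the one-dimensional image $(X - \tilde{X})V$ and $\phi \in V^{*}$ is the associated linear functional. The key intermediate step is to show that $\rho$ is a cyclic vector for $X$. The subspace $M = \mathrm{span}\{X^{k}\rho : k \geq 0\}$ is obviously $X$-invariant, and the identity $\tilde{X}(X^{k}\rho) = X^{k+1}\rho - \phi(X^{k}\rho)\rho$ shows that it is also $\tilde{X}$-invariant; since $\rho \neq 0$, condition (iii) forces $M = V$. Because $X$ is diagonalizable, the existence of a cyclic vector implies that $X$ is multiplicity-free, and writing $\rho = \sum_{i} c_{i} v_{i}$ in an $X$-eigenbasis $\{v_{i}\}_{i \in \X}$ with eigenvalues $\{x_{i}\}_{i \in \X}$, each coefficient $c_{i}$ must be nonzero (otherwise $M$ would miss the corresponding eigenline).

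To close the argument, I would relabel so that $x_{1} = \theta$, pick a nonzero $\tilde{u} \in \tilde{U}$, and expand $\tilde{u} = \sum_{i} d_{i} v_{i}$. Applying $X - \theta I$ to $\tilde{u}$ and using $\tilde{X}\tilde{u} = \theta\tilde{u}$ yields $(X - \theta I)\tilde{u} = Y\tilde{u} = \phi(\tilde{u})\rho$. The left side has zero $v_{1}$-component because $x_{1} - \theta = 0$, while the right side has $v_{1}$-component $\phi(\tilde{u}) c_{1}$; since $c_{1} \neq 0$, this forces $\phi(\tilde{u}) = 0$, hence $Y\tilde{u} = 0$, hence $X\tilde{u} = \tilde{X}\tilde{u} = \theta\tilde{u}$, placing $\tilde{u}$ in $U \cap \tilde{U} = 0$---a contradiction. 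The main obstacle is the cyclicity step: it is not obvious a priori that the Krylov orbit of $\rho$ under $X$ should also be $\tilde{X}$-stable, but the substitution $\tilde{X} = X - Y$ together with the fact that the image of $Y$ is spanned by $\rho$ makes the verification immediate, after which everything reduces to a single coefficient comparison.
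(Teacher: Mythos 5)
Your proof is correct, but it takes a genuinely different route from the paper's. The paper fixes an eigenvalue $\theta$ of $X$ and applies the irreducibility condition of Definition \ref{def:cp}(iii) directly to the subspace $W = (\tilde{X}-\theta I)V$: it shows $W \ne 0$, $\tilde{X}W \subseteq W$, and $XW \subseteq W$ (the last by producing a nonzero vector of $W$ inside the $1$-dimensional image $(X-\tilde{X})V$, which forces that whole image into $W$), concludes $W = V$ by (iii), and hence that $\tilde{X}-\theta I$ is surjective, so $\theta$ is not an eigenvalue of $\tilde{X}$. You instead run a Krylov-subspace argument: the span of $\{X^k\rho\}_{k \ge 0}$ is invariant under $X$ and under $\tilde{X} = X - Y$ because the image of $Y$ is spanned by $\rho$, so $\rho$ is cyclic for $X$; combined with diagonalizability this makes $X$ multiplicity-free with $\rho$ having all components nonzero in an $X$-eigenbasis, after which a single coefficient comparison yields the contradiction. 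Each step of yours checks out (the invariance identity $\tilde{X}(X^k\rho) = X^{k+1}\rho - \phi(X^k\rho)\rho$, the deduction that a diagonalizable operator with a cyclic vector is multiplicity-free, and the vanishing of the $v_1$-component of $(X-\theta I)\tilde{u}$). The trade-off: the paper's argument is shorter and needs no eigenbasis, while your cyclicity step uses no hypothesis about a common eigenvalue and therefore establishes Theorem \ref{thm:multfree} and essentially Lemma \ref{lem:eta} (that $E_i\eta \ne 0$ for $0 \ne \eta \in \Delta V$) as unconditional byproducts --- results the paper instead derives \emph{from} the no-common-eigenvalue theorem later on. Your version effectively reorders the logic of Sections 3 and 6 but is equally valid.
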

\begin{proof}

Suppose that $\theta \in \K$ is an eigenvalue for $X$.  We show that $\theta$ is not an eigenvalue for $\tilde{X}$. If $\text{dim}\, V = 1$, then $X$ acts as $\theta I$ on $V$ and $\tilde{X}$ acts as $\mu I$ on $V$ for some $\mu \in \K$.  By Definition \ref{def:cp}(ii), $X-\tilde{X} \ne 0$, so $\mu \ne \theta$. Thus, $\theta$ is not an eigenvalue for $\tilde{X}$.

\medskip
We now assume that $\text{dim}\  V > 1$.  Let $W = (\tilde{X}-\theta I)V$.  To show that $\theta$ is not an eigenvalue for $\tilde{X}$, we show that $W = V$.  By Definition \ref{def:cp}(iii), it suffices to show that $W \ne 0,\  \tilde{X}W\subseteq W,$ and $X W \subseteq W$.

\medskip
 First, we show that $W \ne 0$.  Suppose that $W = 0$.  Then $\tilde{X}$ acts as $\theta I$ on $V$, so any subspace of $V$ is $\tilde{X}$-invariant.  Thus, the span of any eigenvector for $X$ is a proper nonzero subspace of $V$ that is both $X$- and $\tilde{X}$-invariant, contradicting Definition \ref{def:cp}(iii).  Therefore $W \ne 0$.

\medskip
Next, we show that $\tilde{X} W \subseteq W$.  Observe that
\[
\tilde{X}W = \tilde{X} (\tilde{X} - \theta I)V = (\tilde{X} - \theta I)\tilde{X}V \subseteq (\tilde{X} - \theta I)V = W.
\]
Therefore $\tilde{X}W \subseteq W$.

\medskip
Finally, we show that $X W \subseteq W$.  Since $\tilde{X} W \subseteq W$, it suffices to show that $(\tilde{X}-X)W \subseteq W$.  Let $v \in V$ be an eigenvector for $X$ with eigenvalue $\theta$.  Then $\theta v = Xv$, so $(\tilde{X}-\theta I)v = (\tilde{X}-X)v \in (\tilde{X}-X)V$.  By the definition of $W$, $(\tilde{X}-\theta I)v \in W$.  Also, $(\tilde{X} - \theta I)v \ne 0$ since $X$ and $\tilde{X}$ have no common eigenvectors by Definition \ref{def:cp}(iii).  Thus $(\tilde{X}-\theta I)v$ is a nonzero element of $W \cap (\tilde{X}-X)V$.  Since $\tilde{X}-X$ has rank 1, this implies that $(\tilde{X}-X)V \subseteq W$.  Therefore $(\tilde{X}-X)W \subseteq W$.
\end{proof}

\begin{theorem}\label{thm:multfree} Let $(X,\tilde{X})$ denote a Cauchy pair on $V$.  Then each of $X, \tilde{X}$ is  multiplicity-free.
\end{theorem}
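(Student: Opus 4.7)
The plan is to show that no eigenspace of $X$ can have dimension $\geq 2$, by exhibiting a common eigenvector with $\tilde{X}$ whose existence would contradict Theorem \ref{thm:commoneval}. Once we have it for $X$, the statement for $\tilde{X}$ is automatic via Lemma \ref{lem:dual}, since $(\tilde{X},X)$ is again a Cauchy pair.

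More concretely, suppose for contradiction that $\theta \in \K$ is an eigenvalue of $X$ whose eigenspace $E_\theta = \{v \in V : Xv = \theta v\}$ has dimension at least $2$. By Definition \ref{def:cp}(ii), the kernel $\ker(X - \tilde{X})$ has codimension $1$ in $V$, hence has dimension $|\X| - 1$. The standard dimension formula
\[
\dim(E_\theta \cap \ker(X - \tilde{X})) \;\geq\; \dim E_\theta + \dim \ker(X - \tilde{X}) - \dim V
\]
then forces this intersection to have dimension at least $1$. Pick a nonzero vector $v$ in it; then $Xv = \theta v$ and $(X - \tilde{X})v = 0$, so $\tilde{X} v = X v = \theta v$. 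Thus $\theta$ is a common eigenvalue of $X$ and $\tilde{X}$, contradicting Theorem \ref{thm:commoneval}. Therefore every eigenspace of $X$ has dimension $1$, and combined with Definition \ref{def:cp}(i) this says $X$ is multiplicity-free.

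Finally, by Lemma \ref{lem:dual}, $(\tilde{X},X)$ is a Cauchy pair on $V$, so the argument just given (applied to this pair in place of $(X,\tilde{X})$) shows that $\tilde{X}$ is also multiplicity-free. The main (and really only) obstacle is the dimension-count step, which is entirely routine once one recognizes that rank $1$ of $X-\tilde{X}$ translates to codimension $1$ of its kernel; everything else is a direct appeal to Theorem \ref{thm:commoneval} and Lemma \ref{lem:dual}.
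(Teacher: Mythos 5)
Your proof is correct, and it is essentially the paper's argument viewed from the other side of rank--nullity: the paper uses Theorem \ref{thm:commoneval} to make $\tilde{X}-\theta I$ injective on the eigenspace and then squeezes its image into the one-dimensional space $(X-\tilde{X})V$, while you intersect the eigenspace with the codimension-one kernel of $X-\tilde{X}$ and use Theorem \ref{thm:commoneval} to forbid the resulting common eigenvector. Both proofs rest on exactly the same two inputs (the rank-$1$ condition and Theorem \ref{thm:commoneval}) and the same reduction to $X$ via Lemma \ref{lem:dual}, so the difference is only one of packaging.
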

\begin{proof}
By Lemma \ref{lem:dual}, it suffices to show that $X$ is multiplicity-free.

\medspace
By Definition \ref{def:cp}(i), $X$ is diagonalizable.  Let $W$ denote an eigenspace of $X$.  We show that $W$ has dimension 1. Let $\theta$ be the eigenvalue of $X$ corresponding to $W$. By Theorem \ref{thm:commoneval}, $\theta$ is not an eigenvalue for $\tilde{X}$, so $\tilde{X} - \theta I$ is invertible on $V$.  Thus, the dimension of $W$ is equal to the dimension of $(\tilde{X} - \theta I)W$.  Since $W \ne 0$, it follows that $(\tilde{X} - \theta I)W \ne 0$.  Observe that $X$ acts as $\theta I$ on $W$, so
\begin{equation}\label{eq:containment}
(\tilde{X} - \theta I)W = (\tilde{X} - X)W \subseteq (\tilde{X}-X)V.
\end{equation}
By Definition \ref{def:cp}(ii), $\tilde{X} - X$ has rank 1, so $(\tilde{X} - X)V$ has dimension 1.  Thus, equality holds in (\ref{eq:containment}) and $(\tilde{X}-\theta I)W$ has dimension 1.  Therefore $W$ has dimension 1.
\end{proof}

For a Cauchy pair $(X, \tilde{X})$ on $V$, Theorem \ref{thm:multfree} shows that we can index the eigenvalues of $X$ and the eigenvalues of $\tilde{X}$ using the elements of $\X$.  The indexing is arbitrary, but it will be useful to fix an indexing in order to talk about specific eigenvalues of $X$ and $\tilde{X}$ and objects related to these eigenvalues.
\begin{definition}\label{def:evaldata}\rm
Let $(X, \tilde{X})$ denote a Cauchy pair on $V$.  Let $\{x_i\}_{i \in \X}$ (resp. $\{\tilde{x}_i\}_{i \in \X}$) denote the eigenvalues of $X$ (resp. $\tilde{X})$. We say that $(\{x_i\}_{i \in \X}, \{\tilde{x}_i\}_{i \in \X})$ is {\it eigenvalue data} for $(X, \tilde{X})$.
\end{definition}

\section{Isomorphism and equivalence of Cauchy pairs}\label{sec:iso}

In this section, we introduce the notions of isomorphism, affine isomorphism, and equivalence for Cauchy pairs.

\begin{definition}\label{def:iso}
\rm
Let $(X, \tilde{X})$ and $(Y, \tilde{Y})$ denote Cauchy pairs over $\K$.  By an {\it isomorphism of Cauchy pairs from $(X, \tilde{X})$ to $(Y, \tilde{Y})$}, we mean an isomorphism $\phi$ of $\K$-vector spaces from the vector space underlying $(X, \tilde{X})$ to the vector space underlying $(Y, \tilde{Y})$ such that $\phi X = Y \phi$ and $\phi \tilde{X} = \tilde{Y} \phi$.  The Cauchy pairs $(X, \tilde{X})$ and $(Y, \tilde{Y})$ are said to be {\it isomorphic} whenever there exists an isomorphism of Cauchy pairs from $(X, \tilde{X})$ to $(Y, \tilde{Y})$.
\end{definition}

The next result enables us to construct a family of Cauchy pairs from a given Cauchy pair.
\begin{lemma}\label{lem:affinetrans}
Let $(X, \tilde{X})$ denote a Cauchy pair on $V$ with eigenvalue data $(\{x_i\}_{i \in \X}, \{\tilde{x}_i\}_{i \in \X})$.  Let $\xi, \zeta$ denote scalars from $\K$ with $\xi\ne 0$.  Then
\begin{equation}\label{affinetrans}
(\xi X + \zeta I, \xi \tilde{X} + \zeta I)
\end{equation}
is a Cauchy pair on $V$ with eigenvalue data $(\{\xi x_i + \zeta\}_{i \in \X}, \{\xi \tilde{x}_i + \zeta\}_{i \in \X})$.
\end{lemma}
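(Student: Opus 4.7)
The proof is essentially a routine verification of the three conditions of Definition \ref{def:cp} for the pair $(\xi X + \zeta I, \xi \tilde{X} + \zeta I)$, together with a direct computation of the eigenvalue data. The plan is to handle each condition in turn, exploiting that the maps $X \mapsto \xi X + \zeta I$ and $\tilde{X} \mapsto \xi \tilde{X} + \zeta I$ preserve eigenspaces and invariant subspaces whenever $\xi \ne 0$.

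First I would treat the eigenvalue data. If $v \in V$ satisfies $Xv = x_i v$, then $(\xi X + \zeta I)v = (\xi x_i + \zeta)v$, so every $X$-eigenspace with eigenvalue $x_i$ is an eigenspace of $\xi X + \zeta I$ with eigenvalue $\xi x_i + \zeta$. Since $\xi \ne 0$ and the $\{x_i\}_{i \in \X}$ are mutually distinct (by Theorem \ref{thm:multfree}), the scalars $\{\xi x_i + \zeta\}_{i \in \X}$ are mutually distinct as well, so these exhaust the eigenspaces of $\xi X + \zeta I$. This simultaneously establishes that $\xi X + \zeta I$ is diagonalizable (since $V$ is spanned by the $X$-eigenspaces) and identifies its eigenvalues as $\{\xi x_i + \zeta\}_{i \in \X}$. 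The same argument applied to $\tilde{X}$ gives Definition \ref{def:cp}(i) and produces the eigenvalue data $\{\xi \tilde{x}_i + \zeta\}_{i \in \X}$ for $\xi \tilde{X} + \zeta I$.

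Next I would verify Definition \ref{def:cp}(ii): simply observe that
\[
(\xi X + \zeta I) - (\xi \tilde{X} + \zeta I) = \xi(X - \tilde{X}),
\]
and since $\xi \ne 0$ and $X - \tilde{X}$ has rank $1$, so does $\xi(X-\tilde{X})$. Finally, for Definition \ref{def:cp}(iii), note that for any subspace $W \subseteq V$, the condition $(\xi X + \zeta I)W \subseteq W$ is equivalent to $XW \subseteq W$ (since $\zeta I$ always preserves $W$ and $\xi \ne 0$ may be cancelled), and likewise $(\xi \tilde{X} + \zeta I)W \subseteq W$ is equivalent to $\tilde{X}W \subseteq W$. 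Thus the $(\xi X + \zeta I, \xi \tilde{X} + \zeta I)$-invariant subspaces coincide with the $(X, \tilde{X})$-invariant subspaces, so the irreducibility condition transfers directly.

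There is no real obstacle in this proof; the only point to be careful about is using $\xi \ne 0$ at each step where it is needed (preservation of rank, distinctness of shifted eigenvalues, and equivalence of invariance conditions). All three verifications are one or two lines, and together they give the claim.
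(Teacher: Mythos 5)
Your proof is correct: each of the three conditions of Definition \ref{def:cp} and the eigenvalue data are verified exactly as one should, with $\xi \ne 0$ invoked at precisely the steps where it is needed. The paper states this lemma without proof (treating it as routine), and your argument is exactly the routine verification the author had in mind.
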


\begin{definition}
\rm
Referring to Lemma \ref{lem:affinetrans}, we call (\ref{affinetrans}) the {\it affine transformation of $(X, \tilde{X})$ with parameters $\xi, \zeta$}.
\end{definition}

\begin{definition}\label{def:affineiso}
\rm
Let $(X, \tilde{X})$ and $(Y, \tilde{Y})$ denote Cauchy pairs over $\K$.  We say $(X, \tilde{X})$ and $(Y, \tilde{Y})$ are {\it affine isomorphic} whenever $(X, \tilde{X})$ is isomorphic to an affine transformation of $(Y, \tilde{Y})$.
\end{definition}

We mention a special case of affine isomorphism for Cauchy pairs.
\begin{definition}\label{def:equivalence}
\rm
Let $(X, \tilde{X})$ and $(Y, \tilde{Y})$ denote Cauchy pairs over $\K$.  We say that $(X, \tilde{X})$ and $(Y, \tilde{Y})$ are ${\it equivalent}$ whenever there exists $\zeta \in \K$ such that $(X, \tilde{X})$ is isomorphic to the affine transformation of $(Y, \tilde{Y})$  with parameters $1, \zeta$.
\end{definition}

\section{Associated Cauchy matrices and Cauchy pairs}

In this section, we associate Cauchy matrices to Cauchy pairs.  We show that every Cauchy matrix is associated to some Cauchy pair in this way.

\begin{proposition}\label{prop:assoc}
Let $(X, \tilde{X})$ denote a Cauchy pair on $V$ and let $(\{x_i\}_{i \in \X}, \{\tilde{x}_i\}_{i \in \X})$ denote eigenvalue data for $(X,\tilde{X})$. Then there exists a unique Cauchy matrix $C \in {\rm Mat}_\X(\K)$ such that $(\{x_i\}_{i \in \X}, \{\tilde{x}_i\}_{i \in \X})$ is data for $C$.
\end{proposition}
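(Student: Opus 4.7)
The plan is to define the matrix $C$ explicitly by $C_{ij} = (x_i - \tilde{x}_j)^{-1}$ for $i,j \in \X$ and then verify that this formula (a) makes sense, (b) defines a Cauchy matrix in the sense of Definition \ref{def:cm}, and (c) gives the unique such matrix.

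For (a), the entries $(x_i-\tilde{x}_j)^{-1}$ are well-defined precisely when $x_i \neq \tilde{x}_j$ for all $i,j \in \X$, and this is exactly Theorem \ref{thm:commoneval}. For (b), to invoke Definition \ref{def:cm} I need the scalars $\{x_i\}_{i\in\X}, \{\tilde{x}_i\}_{i\in\X}$ to be mutually distinct. The distinctness of the $x_i$ within themselves, and likewise the $\tilde{x}_i$, follows from Theorem \ref{thm:multfree} together with the fact that eigenvalues indexing distinct one-dimensional eigenspaces must be distinct; the cross-distinctness $x_i \neq \tilde{x}_j$ is again Theorem \ref{thm:commoneval}. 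So $C$ is Cauchy, and by Definition \ref{def:data}, $(\{x_i\}_{i\in\X},\{\tilde{x}_i\}_{i\in\X})$ is data for $C$.

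For uniqueness (c), suppose $C' \in \text{Mat}_\X(\K)$ is any Cauchy matrix for which $(\{x_i\}_{i\in\X},\{\tilde{x}_i\}_{i\in\X})$ is data. By Definition \ref{def:data} this means $\{x_i\}, \{\tilde{x}_i\}$ satisfy (\ref{cauchymatrix}) for $C'$, that is $C'_{ij} = (x_i - \tilde{x}_j)^{-1} = C_{ij}$ for all $i,j \in \X$, whence $C' = C$.

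There is no real obstacle here: the content of the proposition is almost entirely packaged into Theorems \ref{thm:commoneval} and \ref{thm:multfree} together with the definitions, and the proof is essentially bookkeeping. The only small subtlety to keep in mind is ensuring that the distinctness requirement in Definition \ref{def:cm} is interpreted correctly (both within each family and across the two families) so that all three consequences of the earlier theorems are actually invoked.
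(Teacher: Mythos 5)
Your proof is correct and follows essentially the same route as the paper: define $C_{ij} = (x_i - \tilde{x}_j)^{-1}$, use Theorem \ref{thm:commoneval} for well-definedness and Theorem \ref{thm:multfree} for distinctness, and note that uniqueness is immediate from Definition \ref{def:data}. The paper's proof is the same argument with the uniqueness step left implicit.
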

\begin{proof}
By Theorem \ref{thm:commoneval}, $x_i - \tilde{x_j} \ne 0$ for all $i, j \in \X$.  Define $C \in \text{Mat}_{\X}(\K)$ to have $(i,j)$-entry $(x_i - \tilde{x_j})^{-1}$ for $i, j \in \X$.  By Theorem \ref{thm:multfree}, $\{x_i\}_{i \in \X}$ and $\{\tilde{x}_i\}_{i \in \X}$ are mutually distinct.  Thus, by Definition \ref{def:cm} and Definition \ref{def:data}, $C$ is Cauchy with data $(\{x_i\}_{i \in \X}, \{\tilde{x}_i\}_{i \in \X})$.
\end{proof}

\begin{definition}\label{def:assoc}
\rm
Let $(X, \tilde{X})$ denote a Cauchy pair on $V$ and let $C \in \text{Mat}_\X(\K)$ denote a Cauchy matrix. With reference to Proposition \ref{prop:assoc}, we say that $C$ is {\it associated} to $(X, \tilde{X})$ whenever there exists eigenvalue data $(\{x_i\}_{i \in \X}, \{\tilde{x}_i\}_{i \in \X})$ for $(X, \tilde{X})$ such that $(\{x_i\}_{i \in \X}, \{\tilde{x}_i\}_{i \in \X})$  is data for $C$.
\end{definition}

For $P \in \text{Mat}_\X(\K)$, we say that $P$ is a {\it permutation matrix} whenever $P$ has exactly one entry 1 in each row and column and all other entries 0. For $M, N \in \text{Mat}_\X(\K)$, $M$ and $N$ are called {\it permutation equivalent} whenever there exist permutation matrices $P, Q \in \text{Mat}_\X(\K)$ such that $M = PNQ$.  In other words, $M$ and $N$ are permutation equivalent whenever $M$ can be obtained by permuting the rows and columns of $N$.

\begin{lemma}\label{lem:permequiv}
Let $(X, \tilde{X})$ denote a Cauchy pair on $V$ and let $C \in \text{Mat}_{\X}(\K)$ be a Cauchy matrix associated to $(X, \tilde{X})$. Let $C^\prime \in \text{Mat}_{\X}(\K)$ be Cauchy. Then $C^\prime$ is associated to $(X, \tilde{X})$ if and only if $C$ and $C^\prime$ are permutation equivalent.
\end{lemma}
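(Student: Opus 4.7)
The plan is to exploit the fact that eigenvalue data for the Cauchy pair $(X,\tilde{X})$ is determined up to independent permutations of $\X$. By Theorem \ref{thm:multfree}, the eigenvalues of $X$ and of $\tilde{X}$ form fixed subsets of $\K$ of size $|\X|$, so a choice of eigenvalue data for $(X,\tilde{X})$ is exactly a choice of indexing of each of these two sets by $\X$. Any two such choices differ by a pair of permutations $\sigma,\tau$ of $\X$, and the proof reduces to recognizing that such reindexings correspond to row and column permutations of the associated Cauchy matrix.

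For the forward direction I would fix eigenvalue data $(\{x_i\}_{i\in\X},\{\tilde{x}_i\}_{i\in\X})$ witnessing the association of $C$ to $(X,\tilde{X})$ and data $(\{y_i\}_{i\in\X},\{\tilde{y}_i\}_{i\in\X})$ witnessing the association of $C'$. Let $\sigma,\tau$ be the permutations of $\X$ defined by $y_i = x_{\sigma(i)}$ and $\tilde{y}_j = \tilde{x}_{\tau(j)}$. Then $C'_{ij} = (y_i-\tilde{y}_j)^{-1} = (x_{\sigma(i)}-\tilde{x}_{\tau(j)})^{-1} = C_{\sigma(i),\tau(j)}$ for all $i,j\in\X$, which repackages as $C'=PCQ$ for the permutation matrices corresponding to $\sigma$ and $\tau$.

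For the reverse direction, suppose $C'=PCQ$ where $P,Q$ are permutation matrices corresponding to permutations $\sigma,\tau$ of $\X$. I would fix eigenvalue data $(\{x_i\}_{i\in\X},\{\tilde{x}_i\}_{i\in\X})$ witnessing that $C$ is associated to $(X,\tilde{X})$ and set $y_i = x_{\sigma(i)}$ and $\tilde{y}_j = \tilde{x}_{\tau(j)}$. Since $\sigma$ and $\tau$ are bijections, $(\{y_i\}_{i\in\X},\{\tilde{y}_i\}_{i\in\X})$ is again an indexing of the eigenvalues of $X$ and $\tilde{X}$, hence eigenvalue data for $(X,\tilde{X})$. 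A direct computation gives $C'_{ij}=(y_i-\tilde{y}_j)^{-1}$, so $C'$ is associated to $(X,\tilde{X})$ via this data.

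There is no substantive obstacle here; the main care needed is to fix a consistent convention for how a permutation of $\X$ is encoded by a permutation matrix and how left/right multiplication by such a matrix permutes rows and columns. It is worth emphasizing that eigenvalue data for a Cauchy pair is strictly more rigid than ``data'' for a Cauchy matrix in the sense of Lemma \ref{lem:equivdata}: the former is determined up to permutations of $\X$ only, whereas the latter permits an additional common additive shift. This rigidity is exactly what forces the set of Cauchy matrices associated to $(X,\tilde{X})$ to be a single permutation equivalence class rather than something larger.
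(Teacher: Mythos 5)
Your proposal is correct and follows essentially the same route as the paper: the paper's proof is the one-line observation that it ``follows from Definition \ref{def:assoc} and the comments above Definition \ref{def:evaldata}'' (namely, that the indexing of the eigenvalues is arbitrary), and your argument simply spells out that two choices of eigenvalue data differ by a pair of permutations of $\X$, which translates into permutation equivalence of the associated Cauchy matrices. Your closing remark contrasting the rigidity of eigenvalue data with the additive freedom in Lemma \ref{lem:equivdata} is accurate and correctly identifies why the associated matrices form exactly one permutation equivalence class.
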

\begin{proof}
Follows from Definition \ref{def:assoc} and the comments above Definition \ref{def:evaldata}.
\end{proof}

Next, we show that every Cauchy matrix is associated to a Cauchy pair.  We first recall some concepts from linear algebra.

\medskip
Let $S \in \text{End}(V)$ and let $\{v_i\}_{i \in \X}$ denote a basis for $V$.  For $M \in \text{Mat}_{\X}(\K)$, we say that $M$ {\it represents} $S$ with respect to $\{v_i\}_{i \in \X}$ whenever $S v_j = \sum_{i \in \X} M_{ij}v_i$ for all $j \in \X$.  Let $\{w_i\}_{i \in \X}$ denote a basis for $V$.  By the {\it transition matrix} from $\{v_i\}_{i \in \X}$ to $\{w_i\}_{i \in \X}$, we mean the matrix $B \in \text{Mat}_{\X}(\K)$ such that $w_j = \sum_{i \in \X} B_{ij}v_i$ for all $j \in \X$.  In this case, $B^{-1}$ exists and is equal to the transition matrix from $\{w_i\}_{i \in \X}$ to $\{v_i\}_{i \in \X}$.

\medskip
Let $C \in \text{Mat}_{\X}(\K)$ be Cauchy.  We now construct a Cauchy pair $(X, \tilde{X})$ on $V$ to which $C$ is associated.

\begin{notation}\label{notation}
\rm
Let $C \in \text{Mat}_{\X}(\K)$ be Cauchy and let $(\{x_i\}_{i \in \X}, \{\tilde{x}_i\}_{i \in \X})$ be data for $C$.  Let $D, \tilde{D}  \in \text{Mat}_{\X}(\K)$ be diagonal with entries $D_{ii} = x_i$ and $\tilde{D}_{ii} = x_i$ for $i \in \X$.  Let $\{v_i\}_{i \in \X}$ and $\{w_i\}_{i \in \X}$ denote bases for $V$ such that $C$ is the transition matrix from $\{v_i\}_{i \in \X}$ to $\{w_i\}_{i \in \X}$.  Let $X \in {\rm End}(V)$ be the linear transformation represented by $D$ with respect to $\{v_i\}_{i \in \X}$. Let $\tilde{X}\in {\rm End}(V)$ be the linear transformation represented by $\tilde{D}$ with respect to $\{w_i\}_{i \in \X}$.
\end{notation}

The matrices $C, D, \tilde{D}$ are related in the following way.
\begin{proposition}\label{prop:displ}
With reference to Notation \ref{notation},
\begin{equation}\label{eq:displ}
    D C - C \tilde{D} = J.
\end{equation}
\end{proposition}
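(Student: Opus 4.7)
The plan is to verify equation (\ref{eq:displ}) by a direct entrywise calculation, exploiting the fact that $D$ and $\tilde{D}$ are diagonal so that left-multiplication by $D$ scales rows and right-multiplication by $\tilde{D}$ scales columns.

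Fix $i,j \in \X$. Since $D$ is diagonal with $D_{ii}=x_i$, the $(i,j)$-entry of $DC$ is $x_i C_{ij} = x_i/(x_i-\tilde{x}_j)$. Since $\tilde{D}$ is diagonal with $\tilde{D}_{jj}=\tilde{x}_j$, the $(i,j)$-entry of $C\tilde{D}$ is $C_{ij}\tilde{x}_j = \tilde{x}_j/(x_i-\tilde{x}_j)$. Subtracting gives
\[
 (DC - C\tilde{D})_{ij} \;=\; \frac{x_i - \tilde{x}_j}{x_i - \tilde{x}_j} \;=\; 1 \;=\; J_{ij},
\]
which holds because $(\{x_i\}_{i\in\X},\{\tilde{x}_i\}_{i\in\X})$ is data for $C$, so in particular $x_i - \tilde{x}_j \ne 0$ for all $i,j \in \X$. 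Since $i,j$ were arbitrary, the matrix identity (\ref{eq:displ}) follows.

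There is essentially no obstacle here: the identity is a one-line consequence of the defining formula $C_{ij}=(x_i-\tilde{x}_j)^{-1}$ from Definition \ref{def:cm} together with the diagonal form of $D,\tilde{D}$ given in Notation \ref{notation}. The only subtlety worth flagging explicitly is the nonvanishing of the denominators, which is guaranteed by the mutual distinctness of $\{x_i\}_{i\in\X}\cup\{\tilde{x}_i\}_{i\in\X}$ built into Definition \ref{def:cm} (and which is precisely what allows $(x_i-\tilde{x}_j)^{-1}$ to make sense as an entry of $C$ in the first place).
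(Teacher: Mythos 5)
Your proof is correct and is exactly the ``routine matrix multiplication'' that the paper's one-line proof alludes to: left-multiplication by $D$ scales row $i$ by $x_i$, right-multiplication by $\tilde{D}$ scales column $j$ by $\tilde{x}_j$, and the entries telescope to $1$. (Note that you correctly read $\tilde{D}_{ii}=\tilde{x}_i$ despite the evident typo ``$\tilde{D}_{ii}=x_i$'' in Notation \ref{notation}; with the literal reading the identity would fail.)
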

\begin{proof}
Routine by matrix multiplication.
\end{proof}

\begin{theorem}\label{thm:cpfromcm}
With reference to Notation \ref{notation}, $(X, \tilde{X})$ is a Cauchy pair on $V$.  Moreover, $C$ is associated to $(X, \tilde{X})$.
\end{theorem}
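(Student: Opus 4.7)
The plan is to verify the three conditions (i)--(iii) of Definition \ref{def:cp} for $(X,\tilde X)$ and then read off that $C$ is associated to $(X,\tilde X)$ directly from the construction in Notation \ref{notation}.

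For (i), diagonalizability is immediate from the setup: by definition $Xv_i = x_i v_i$ for $i \in \X$, so $\{v_i\}_{i\in\X}$ is an $X$-eigenbasis for $V$, and similarly $\{w_i\}_{i\in\X}$ is an $\tilde X$-eigenbasis (after correcting the evident typo in Notation \ref{notation}, so that $\tilde D_{ii}=\tilde x_i$). For (ii), I would transfer everything into the basis $\{w_i\}_{i\in\X}$. Since $C$ is the transition matrix from $\{v_i\}_{i\in\X}$ to $\{w_i\}_{i\in\X}$, the matrix representing $X$ in $\{w_i\}_{i\in\X}$ is $C^{-1}DC$, while $\tilde X$ is represented there by $\tilde D$. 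Proposition \ref{prop:displ} gives $DC - C\tilde D = J$, so left multiplication by $C^{-1}$ yields
\[
    C^{-1}DC - \tilde D \;=\; C^{-1}J.
\]
The right-hand side has rank $1$ because $J$ has rank $1$ and $C^{-1}$ is invertible. Hence $X-\tilde X$, which is represented by the left-hand side, has rank $1$.

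The main obstacle is (iii): there is no proper nonzero subspace $W\subseteq V$ with $XW\subseteq W$ and $\tilde X W\subseteq W$. The key idea I would use is that the scalars $\{x_i\}_{i\in\X}$ are mutually distinct, so $X$ is multiplicity-free, and likewise for $\tilde X$. Consequently, any $X$-invariant subspace is a span of a subfamily of $\{v_i\}_{i\in\X}$, and any $\tilde X$-invariant subspace is a span of a subfamily of $\{w_i\}_{i\in\X}$. Thus, if such a $W$ existed, one would have
\[
    W \;=\; \mathrm{span}\{v_i : i \in I\} \;=\; \mathrm{span}\{w_j : j \in J\}
\]
for some proper nonempty subsets $I,J\subseteq \X$. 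Fix any $j\in J$. Since $C$ is the transition matrix from $\{v_i\}_{i\in\X}$ to $\{w_j\}_{j\in\X}$, we have
\[
    w_j \;=\; \sum_{i\in\X} \frac{1}{x_i-\tilde x_j}\, v_i,
\]
and every coefficient is nonzero. Thus $w_j \in \mathrm{span}\{v_i : i \in I\}$ forces $I = \X$, contradicting the properness of $W$. This rules out (iii) failing.

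Finally, for the "moreover" part, I observe that by construction $\{x_i\}_{i\in\X}$ is the set of eigenvalues of $X$ (indexed so that $x_i$ corresponds to the eigenvector $v_i$) and $\{\tilde x_i\}_{i\in\X}$ is the set of eigenvalues of $\tilde X$, so $(\{x_i\}_{i\in\X},\{\tilde x_i\}_{i\in\X})$ is eigenvalue data for $(X,\tilde X)$ in the sense of Definition \ref{def:evaldata}. Since this pair of sequences is also data for $C$ by hypothesis, Definition \ref{def:assoc} gives that $C$ is associated to $(X,\tilde X)$.
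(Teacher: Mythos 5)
Your proof is correct, and parts (i), (ii), and the ``moreover'' claim follow essentially the same lines as the paper (including the same use of Proposition \ref{prop:displ} to identify $X-\tilde X$ with $C^{-1}J$ in the $\{w_i\}_{i\in\X}$ basis, and the same reading of Definitions \ref{def:evaldata} and \ref{def:assoc} at the end). Where you genuinely diverge is condition (iii). The paper works entirely in the basis $\{w_i\}_{i\in\X}$: it writes $W=\mathrm{Span}\{w_i : i\in\mathfrak I\}$ using that $\tilde X$ is multiplicity-free, applies $X-\tilde X$ (represented by $C^{-1}J$) to some $w_j$ with $j\in\mathfrak I$, and concludes $\mathfrak I=\X$ because every row sum of $C^{-1}$ is nonzero --- a fact that requires Corollary \ref{cor:CinvJ} and hence the Lagrange-interpolation computation of Lemma \ref{lem:lambdas}. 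You instead exploit that \emph{both} $X$ and $\tilde X$ are multiplicity-free, so $W$ is simultaneously a span of a subfamily of $\{v_i\}_{i\in\X}$ and of $\{w_j\}_{j\in\X}$, and then use only that every entry $C_{ij}=(x_i-\tilde x_j)^{-1}$ of $C$ itself is nonzero: expanding a single $w_j\in W$ in the basis $\{v_i\}_{i\in\X}$ forces $I=\X$ by uniqueness of coordinates. Your route is more elementary --- it needs nothing beyond the definition of a Cauchy matrix, whereas the paper's version leans on the explicit formula for the row sums of $C^{-1}$; the paper's version has the mild advantage of invoking invariance of $W$ under only one of the two diagonalizable operators plus the rank-one difference. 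Both are valid, and your observation that $\tilde D_{ii}=x_i$ in Notation \ref{notation} is a typo for $\tilde D_{ii}=\tilde x_i$ is correct.
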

\begin{proof}
To show that $(X,\tilde{X})$ is a Cauchy pair, we show that $(X, \tilde{X})$ satisfies conditions (i)--(iii) of Definition \ref{def:cp}.
By the construction in Notation \ref{notation}, (i) is satisfied. Next, we show that (ii) holds.  With respect to $\{w_i\}_{i \in \X}$, $X$ is represented by the matrix $C^{-1} D C$.  By Proposition \ref{prop:displ}, $C^{-1} D C = \tilde{D} + C^{-1}J$.  Thus, with respect to $\{w_i\}_{i \in \X}$, $X-\tilde{X}$ is represented by the matrix $\tilde{D} + C^{-1}J - \tilde{D} = C^{-1}J$.  This matrix has rank 1, so $X-\tilde{X}$ has rank 1.  Therefore (ii) holds.

\medskip
Finally, we show that (iii) holds.  Let $0 \ne W \subseteq V$ such that $XW \subseteq W$ and $\tilde{X}W \subseteq W$.  We show that $W = V$.  Since $\tilde{X}$ is multiplicity-free, any $\tilde{X}$-invariant subspace of $V$ is the span of some set of eigenvectors for $\tilde{X}$.  Thus, there exists $\mathfrak{I} \subseteq \X$ such that $W = \text{Span}\{w_i \ |\  i \in \mathfrak{I}\}$.  To show that $W = V$, it suffices to show that $\mathfrak{I} = \X$.  Since $W$ is invariant under $X$ and $\tilde{X}$, $W$ is also invariant under $X - \tilde{X}$.  Therefore $(X-\tilde{X})w_j \in W$ for all $j \in \mathfrak{I}$.  As noted above, $X-\tilde{X}$ is represented by $C^{-1}J$ with respect to $\{w_i\}_{i \in \X}$.  Thus, for $j \in \mathfrak{I}$, $(X-\tilde{X}) w_j = \sum_{i\in \X} (C^{-1}J)_{ij}w_i$.  Observe that $(C^{-1}J)_{ij}$ is the $i$th row sum of $C^{-1}$. By Corollary \ref{cor:CinvJ}, $(C^{-1}J)_{ij} \ne 0$ for $i \in \X$, so $(X-\tilde{X})w_j$ is not contained in the span of any proper subset of $\{w_i\}_{i \in \X}$.  Therefore $\mathfrak{I} = \X$, so (iii) holds.

\medskip
The final assertion follows by construction.
\end{proof}

Observe that, with reference to Notation \ref{notation}, $C$ acts as a transition matrix from an $X$-eigenbasis for $V$ to an $\tilde{X}$-eigenbasis for $V$. In Section \ref{sec:mainresults}, we will show that every Cauchy matrix associated to a Cauchy pair acts in this way. That is, we will show that if a Cauchy matrix $C \in \text{Mat}_\X(\K)$ is associated to a Cauchy pair $(X, \tilde{X})$ on $V$, then there exists an $X$-eigenbasis $\{v_i\}_{i \in \X}$ for $V$ and an $\tilde{X}$-eigenbasis $\{w_i\}_{i \in \X}$ for $V$ such that $C$ is the transition matrix from $\{v_i\}_{i \in \X}$ to $\{w_i\}_{i \in \X}$.

\medskip
We also note the following fact about equivalent Cauchy pairs.
\begin{lemma}\label{lem:equivtoassoc}
Let $(X, \tilde{X})$ denote a Cauchy pair on $V$ and let $C \in {\rm Mat}_{\X}(\K)$ be a Cauchy matrix associated to $(X, \tilde{X})$. Let $(Y, \tilde{Y})$ be a Cauchy pair and suppose that $(Y, \tilde{Y})$ is equivalent to $(X, \tilde{X})$.  Then $C$ is associated to $(Y, \tilde{Y})$.
\end{lemma}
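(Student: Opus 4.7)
The plan is to unfold the definitions of equivalence and associated Cauchy matrix, and then to observe that the equivalence relation introduces precisely the sort of uniform shift in eigenvalue data that Lemma \ref{lem:equivdata} characterizes as preserving a Cauchy matrix.

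First I would use Definition \ref{def:assoc} to fix eigenvalue data $(\{x_i\}_{i \in \X}, \{\tilde{x}_i\}_{i \in \X})$ for $(X, \tilde{X})$ that is simultaneously data for $C$, so that $C_{ij} = (x_i - \tilde{x}_j)^{-1}$ for all $i, j \in \X$. By Definition \ref{def:equivalence}, there exists $\zeta \in \K$ such that $(X, \tilde{X})$ is isomorphic, as a Cauchy pair, to the affine transformation $(Y + \zeta I, \tilde{Y} + \zeta I)$. Since an isomorphism of Cauchy pairs is in particular an invertible linear map intertwining the respective transformations, it preserves their spectra. Consequently the eigenvalues of $Y + \zeta I$ are exactly $\{x_i\}_{i \in \X}$ and those of $\tilde{Y} + \zeta I$ are exactly $\{\tilde{x}_i\}_{i \in \X}$, whence the eigenvalues of $Y$ (resp.\ $\tilde{Y}$) are $\{x_i - \zeta\}_{i \in \X}$ (resp.\ $\{\tilde{x}_i - \zeta\}_{i \in \X}$).

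Next I would set $y_i := x_i - \zeta$ and $\tilde{y}_i := \tilde{x}_i - \zeta$ for $i \in \X$ and declare these to be the eigenvalues of $Y$ and $\tilde{Y}$ under a chosen indexing. This choice is legitimate because, as noted in the remarks above Definition \ref{def:evaldata}, the indexing of eigenvalues of a Cauchy pair is arbitrary. Thus $(\{y_i\}_{i \in \X}, \{\tilde{y}_i\}_{i \in \X})$ is eigenvalue data for $(Y, \tilde{Y})$. Since $y_i - \tilde{y}_j = x_i - \tilde{x}_j$ for all $i, j \in \X$, this eigenvalue data is also data for $C$, and Definition \ref{def:assoc} then yields the desired conclusion.

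There is no substantive obstacle here; the argument is a short bookkeeping exercise. The only point that might be tempting to skip is the choice of indexing of the eigenvalues of $(Y, \tilde{Y})$, which must be made compatibly with the fixed indexing for $(X, \tilde{X})$. As a sanity check one can phrase the last step directly through Lemma \ref{lem:equivdata}: the uniform shift by $-\zeta$ applied to both sets of eigenvalues is exactly the operation on data that Lemma \ref{lem:equivdata} identifies as preserving the associated Cauchy matrix.
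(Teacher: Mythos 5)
Your proposal is correct and follows essentially the same route as the paper: extract the shift $\zeta$ from Definition \ref{def:equivalence}, observe that the isomorphism preserves spectra so the eigenvalue data of $(Y,\tilde{Y})$ is a uniform translate of that of $(X,\tilde{X})$ (the content of Lemma \ref{lem:affinetrans}), and conclude via Lemma \ref{lem:equivdata}. The only cosmetic difference is the direction in which you read the isomorphism, which is immaterial since the relation is symmetric.
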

\begin{proof}
By Definition \ref{def:equivalence}, there exists $ \zeta \in \K$ such that $(Y, \tilde{Y})$ is isomorphic to $(X+\zeta I, \tilde{X}+\zeta I)$. By Definition \ref{def:assoc}, there exists eigenvalue data $(\{x_i\}_{i \in \X}, \{\tilde{x}_i\}_{i \in \X})$ for $(X, \tilde{X})$ such that $(\{x_i\}_{i \in \X}, \{\tilde{x}_i\}_{i \in \X})$ is data for $C$. By Lemma \ref{lem:affinetrans}, $(Y, \tilde{Y})$ has eigenvalue data $(\{x_i + \zeta\}_{i \in \X}, \{\tilde{x}_i + \zeta\})$.  By Lemma \ref{lem:equivdata}, $(\{x_i + \zeta\}_{i \in \X}, \{\tilde{x}_i + \zeta\})$ is also data for $C$.  Therefore $C$ is associated to $(Y, \tilde{Y})$.
\end{proof}
We just proved Lemma \ref{lem:equivtoassoc}. We will show in Section \ref{sec:mainresults} that the converse of Lemma \ref{lem:equivtoassoc} holds as well.  That is, we will show that if a Cauchy matrix is associated to Cauchy pairs $(X, \tilde{X})$ and $(Y, \tilde{Y})$, then $(X, \tilde{X})$ and $(Y, \tilde{Y})$ are equivalent.

\section{Standard Bases}\label{sec:standardbases}

In this section, we introduce the notion of a standard basis for the vector space underlying a Cauchy pair.

\medskip
Until future notice, let $(X, \tilde{X})$ denote a Cauchy pair on $V$ and  let $(\{x_i\}_{i \in \X}, \{\tilde{x}_i\}_{i \in \X})$ denote eigenvalue data for $(X, \tilde{X})$. Observe that
\begin{equation}\label{eq:traces}
\text{tr}(X) = \sum_{i \in \X} x_i, \qquad \text{tr}(\tilde{X}) = \sum_{i \in \X} \tilde{x}_i,
\end{equation}
where tr denotes trace.

\medskip
We now recall the notion of a primitive idempotent. Let $S$ denote a multiplicity-free element of $\text{End}(V)$ with eigenvalues $\{\theta_i\}_{i \in \X}$.  For $i \in \X$, let $V_i$ denote the eigenspace of $S$ corresponding to $\theta_i$.  Define $E_i \in \text{End}(V)$ such that $(E_i - I)V_i = 0$ and $E_i V_j = 0$ for $j \in \X, j \ne i$. Observe that (i) $V_i = E_i V \ (i \in \X)$, (ii) $E_i E_j = \delta_{ij}E_i \ (i,j \in \X)$, (iii)$S E_i = \theta_i E_i = E_i S \ (i \in \X)$,  (iv) $I = \sum_{i \in \X} E_i$, and (v) $S = \sum_{i \in \X}\theta_i E_i$.

\medskip
By linear algebra,
\[
    E_i = \prod_{\substack{j \in \X\\ j\ne i}} \frac{S - \theta_j I}{\theta_i - \theta_j}  \qquad (i \in \X).
\]
We call $E_i$ the {\it primitive idempotent} of $S$ corresponding to $\theta_i$.

\medskip
For $i \in \X$, let $E_i$ denote the primitive idempotent of $X$ corresponding to $x_i$ and let $\tilde{E}_i$ denote the primitive idempotent of $\tilde{X}$ corresponding to $\tilde{x}_i$.
\begin{definition}\label{def:delta}
\rm
 Define $\Delta, \tilde{\Delta} \in \text{End}(V)$ by $\Delta = X - \tilde{X}$ and $\tilde{\Delta} = \tilde{X} - X = -\Delta$.  Observe that $\Delta V = \tilde{\Delta}V$. By Definition \ref{def:cp}, this common subspace has dimension 1.
\end{definition}

By (\ref{eq:traces}),
\begin{equation}\label{eq:deltatrace}
\text{tr}(\Delta) = \sum_{i \in \X} (x_i - \tilde{x}_i).
\end{equation}

\begin{lemma}\label{lem:eta}
Let $\eta$ denote a nonzero vector in $\Delta V$.  Then for $i \in \X$,  $E_i \eta$ is nonzero and therefore a basis for $E_i V$.  Moreover, $\{E_i \eta\}_{i \in \X}$ is a basis for $V$.
\end{lemma}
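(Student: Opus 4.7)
The plan is to argue by contradiction for the non-vanishing of each $E_i\eta$, exploiting the indecomposability condition in Definition \ref{def:cp}(iii), and then to use the multiplicity-free property already established in Theorem \ref{thm:multfree} to package the result.

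First I would fix $i \in \X$ and suppose, toward a contradiction, that $E_i\eta = 0$. Set
\[
W = \sum_{\substack{j \in \X \\ j \ne i}} E_j V.
\]
Since the eigenspaces of $X$ sum directly to $V$ and each $E_j V$ is one-dimensional, $W$ is a proper nonzero subspace of $V$. By construction $W$ is $X$-invariant (it is a sum of $X$-eigenspaces). The key observation is that the assumption $E_i \eta = 0$ forces $\eta = \sum_{j \ne i} E_j\eta \in W$, because $I = \sum_{j \in \X} E_j$.

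Next I would show that $W$ is $\tilde{X}$-invariant. For any $w \in W$, I write $\tilde{X}w = Xw - \Delta w$. The term $Xw$ lies in $W$ by $X$-invariance. For the term $\Delta w$, I use that $\Delta V$ is one-dimensional and spanned by $\eta$ (Definition \ref{def:delta}); hence $\Delta w$ is a scalar multiple of $\eta$, and since we have just shown $\eta \in W$, we conclude $\Delta w \in W$. Therefore $\tilde{X}w \in W$. This gives a proper, nonzero, $X$- and $\tilde{X}$-invariant subspace of $V$, contradicting Definition \ref{def:cp}(iii). Hence $E_i \eta \ne 0$ for every $i \in \X$.

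Finally, by Theorem \ref{thm:multfree} each eigenspace $E_i V$ has dimension $1$, so the nonzero vector $E_i \eta \in E_i V$ is a basis of $E_iV$. Using the decomposition $V = \bigoplus_{i \in \X} E_i V$, I conclude that $\{E_i\eta\}_{i \in \X}$ is a basis for $V$. The only substantive step is the $\tilde{X}$-invariance of $W$; everything else is bookkeeping with the primitive idempotent identities recalled just above the lemma.
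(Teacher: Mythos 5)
Your proposal is correct and follows essentially the same route as the paper: assume $E_i\eta=0$, form $W=\sum_{j\ne i}E_jV$, note $\eta\in W$, show $W$ is $X$- and $\tilde X$-invariant via $\Delta W\subseteq\operatorname{Span}\{\eta\}\subseteq W$, and contradict Definition \ref{def:cp}(iii). The only cosmetic difference is that the paper deduces $W\ne 0$ from $0\ne\eta\in W$ rather than from dimension counting, which also covers the degenerate case $|\X|=1$; your own observation that $\eta\in W$ supplies this anyway.
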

\begin{proof}
Suppose that there exists $i \in \X$ such that $E_i \eta = 0$.  Let $W = \sum_{j \in \X\setminus i} E_j V$.

Recall that $I = \sum_{j \in \X} E_j$.  Applying each side of this equation to $\eta$, we find
\[
\eta = \sum_{j \in \X} E_j \eta = \sum_{j \in \X \setminus  i} E_j \eta \in W.
\]

Thus $W\ne 0$.  Note that $X W \subseteq W$ since $X E_j= x_j E_j$ for $j \in \X$.  Also, $\eta$ spans $\Delta V$, so
\[
 \Delta W \subseteq \Delta V = \text{Span}\{\eta\} \subseteq W.
\]
Then by Definition \ref{def:delta}, $\tilde{X} W \subseteq W$.  By construction, $W\ne V$, so this contradicts Definition 2.1(iii).  Therefore $E_i \eta \ne 0$ for all $i \in \X$.

\medskip
The last assertion follows from this and the fact that $V = \sum_{i \in \X} E_i V$ (direct sum).
\end{proof}

\begin{definition}\label{def:Xstd}
\rm
Let $\{v_i\}_{i \in \X}$ denote a basis for $V$.  We call $\{v_i\}_{i \in \X}$ {\it X-standard} whenever there exists $0 \ne \eta \in \Delta V$ such that $v_i = E_i \eta$ for all $i \in \X$.
\end{definition}

\begin{lemma}\label{lem:Xstd}
Let $\{v_i\}_{i \in \X}$ denote vectors in $V$ that are not all zero.  Then $\{v_i\}_{i \in \X}$ is an $X$-standard basis for $V$ if and only the following {\rm (i), (ii)} hold.
\begin{enumerate}
\item[{\rm (i)}] $v_i \in E_i V$ for all $i \in \X$;
\item[{\rm (ii)}] $\displaystyle\sum_{j \in \X} v_j \in \Delta V$.
\end{enumerate}
\end{lemma}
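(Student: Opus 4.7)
The plan is to prove both directions by directly manipulating the decomposition $I = \sum_{i \in \X} E_i$ and exploiting the orthogonality relations $E_i E_j = \delta_{ij} E_i$.

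For the forward direction, I would assume that $\{v_i\}_{i \in \X}$ is $X$-standard. Then by Definition \ref{def:Xstd} there exists $0 \ne \eta \in \Delta V$ with $v_i = E_i \eta$, so (i) is immediate from $v_i = E_i \eta \in E_i V$. For (ii), I would sum to obtain
\[
\sum_{j \in \X} v_j \;=\; \sum_{j \in \X} E_j \eta \;=\; \Bigl(\sum_{j \in \X} E_j\Bigr)\eta \;=\; I\eta \;=\; \eta \;\in\; \Delta V,
\]
using property (iv) of the primitive idempotents recalled in the paragraph preceding Definition \ref{def:delta}.

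For the reverse direction, assume (i) and (ii), and define $\eta := \sum_{j \in \X} v_j$. By (ii), $\eta \in \Delta V$. I first need to show $\eta \ne 0$: since (i) gives $v_j \in E_j V$ and since $V = \bigoplus_{j \in \X} E_j V$ is a direct sum of eigenspaces, the equality $\sum_j v_j = 0$ would force every $v_j = 0$, contradicting the hypothesis that the $v_i$ are not all zero. Hence $\eta$ is a nonzero element of $\Delta V$. Next I would verify $v_i = E_i \eta$ by computing
\[
E_i \eta \;=\; \sum_{j \in \X} E_i v_j \;=\; v_i,
\]
where the last equality uses (i): writing $v_j = E_j u_j$ for some $u_j \in V$ gives $E_i v_j = E_i E_j u_j = \delta_{ij} v_j$. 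Thus $v_i = E_i \eta$ for every $i \in \X$, so the $v_i$ match the form required in Definition \ref{def:Xstd}. Finally, Lemma \ref{lem:eta} guarantees that $\{E_i \eta\}_{i \in \X}$ is a basis for $V$, completing the verification that $\{v_i\}_{i \in \X}$ is an $X$-standard basis.

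The argument is almost entirely bookkeeping once the correct $\eta$ is chosen; the only subtle point is justifying $\eta \ne 0$, and this falls out immediately from the direct-sum decomposition together with the hypothesis that the $v_i$ are not all zero. The lemma is essentially a restatement of Definition \ref{def:Xstd} in terms of the individual eigencomponents, so I do not expect any serious obstacle.
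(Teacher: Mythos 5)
Your proof is correct and follows essentially the same route as the paper: both directions use the decomposition $I = \sum_{i \in \X} E_i$ applied to $\eta$, define $\eta = \sum_j v_j$ in the converse, verify $E_i\eta = v_i$ via $E_iE_j = \delta_{ij}E_i$, and invoke Lemma \ref{lem:eta} to conclude that the $v_i$ form a basis. Your justification that $\eta \ne 0$ via the direct-sum decomposition is a slightly more explicit version of the paper's one-line observation, but the argument is the same.
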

\begin{proof}
First, assume that $\{v_i\}_{i \in \X}$ is an $X$-standard basis for $V$.  By Definition \ref{def:Xstd}, there exists $0 \ne \eta \in \Delta V$ such that $v_i = E_i \eta$ for all $i \in \X$.  Thus $\{v_i\}_{i \in \X}$ satisfy (i).  Recall that $I = \sum_{j\in\X} E_j$.  Applying each side of this equation to $\eta$, we find $\eta = \sum_{j \in \X} E_j \eta = \sum_{j\in\X} v_j$.  Thus $\{v_i\}_{i\in\X}$ satisfy (ii).

\medskip
Conversely, assume that $\{v_i\}_{i \in \X}$ satisfy (i), (ii) above.  Define $\eta = \sum_{j \in \X} v_j$.  By (ii), $\eta \in \Delta V$.  Using (i), we find that $E_i \eta = \sum_{j \in \X} \delta_{ij}v_j = v_i$ for $i \in \X$. Observe that $\eta \ne 0$ since at least one of $\{v_i\}_{i \in \X}$ is nonzero.  By Lemma \ref{lem:eta},  $\{v_i\}_{i \in \X}$ is a basis for $V$.  Thus $\{v_i\}_{i \in \X}$ is an $X$-standard basis for $V$ by Definition \ref{def:Xstd}.
\end{proof}

\begin{lemma}\label{lem:Xstdunique}
Let $\{v_i\}_{i \in \X}$ be an $X$-standard basis for $V$.  Let $\{v_i^\prime\}_{i \in \X}$ denote vectors in $V$.  Then $\{v_i^\prime\}_{i \in \X}$ is an $X$-standard basis for $V$ if and only if there exists $0 \ne \theta \in \K$ such that $v_i^\prime = \theta v_i$ for $i \in \X$.
\end{lemma}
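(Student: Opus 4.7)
The plan is to reduce the statement to the one-dimensionality of $\Delta V$, which is recorded in Definition \ref{def:delta}. By Definition \ref{def:Xstd}, an $X$-standard basis is parametrized by the choice of a nonzero vector $\eta \in \Delta V$, and since $\dim \Delta V = 1$, any two such choices differ by a nonzero scalar. Once this is in place, the $E_i$'s transport the scalar from the $\eta$'s to the basis vectors, and the claim drops out.

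First I would handle the backward direction. Suppose $v_i' = \theta v_i$ with $\theta \ne 0$. By Definition \ref{def:Xstd}, pick $0 \ne \eta \in \Delta V$ so that $v_i = E_i \eta$ for $i \in \X$. Setting $\eta' = \theta \eta$, I have $\eta' \in \Delta V$ and $\eta' \ne 0$ (since $\Delta V$ is a subspace and $\theta \ne 0$), and $E_i \eta' = \theta E_i \eta = \theta v_i = v_i'$ for all $i \in \X$. Thus $\{v_i'\}_{i \in \X}$ is $X$-standard by Definition \ref{def:Xstd}.

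For the forward direction, suppose $\{v_i'\}_{i \in \X}$ is an $X$-standard basis for $V$. By Definition \ref{def:Xstd} applied to each basis, there exist $\eta, \eta' \in \Delta V$, both nonzero, with $v_i = E_i \eta$ and $v_i' = E_i \eta'$ for all $i \in \X$. By Definition \ref{def:delta}, $\Delta V$ has dimension $1$, so $\eta$ spans $\Delta V$ and there is a unique scalar $\theta \in \K$ with $\eta' = \theta \eta$; since $\eta' \ne 0$, we have $\theta \ne 0$. Applying $E_i$ to both sides gives $v_i' = E_i \eta' = \theta E_i \eta = \theta v_i$ for each $i \in \X$, completing the proof.

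There is no real obstacle here; the only thing one must keep track of is that the scalar $\theta$ is nonzero, which is immediate from $\eta' \ne 0$ and the fact that $\eta$ is a basis for the one-dimensional space $\Delta V$. The argument could also be phrased through Lemma \ref{lem:Xstd} by taking $\eta = \sum_j v_j$ and $\eta' = \sum_j v_j'$, but the direct route via Definition \ref{def:Xstd} is shorter.
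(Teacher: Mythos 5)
Your proof is correct and follows exactly the route the paper intends: its own proof is the one-line remark that the claim ``follows from Definition \ref{def:Xstd} and the fact that $\Delta V$ has dimension 1,'' and your argument is precisely that reasoning written out, with the nonzero scalar transported from $\eta$ to the $v_i$ via the idempotents $E_i$. No gaps; the only detail worth a passing mention is that in the backward direction $\{v_i'\}_{i\in\X}$ is a basis because it is a nonzero scalar multiple of one (or by Lemma \ref{lem:eta}), which your argument implicitly covers.
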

\begin{proof}
Follows from Definition \ref{def:Xstd} and the fact that $\Delta V$ has dimension 1.
\end{proof}

\section{The scalars $\alpha_i$}\label{sec:alphas}
We continue to study the Cauchy pair $(X, \tilde{X})$ on $V$ with eigenvalue data $(\{x_i\}_{i\in\X}, \{\tilde{x}_i\}_{i\in\X})$.  Recall the map $\Delta \in \text{End}(V)$ from Definition \ref{def:delta}. In this section, we introduce some scalars to help describe the space $\Delta V$.
\begin{definition}\label{def:alphas}
\rm
For $i \in \X$, define
\[
 \alpha_i = \text{tr}(E_i\Delta).
\]
\end{definition}

\begin{proposition}\label{prop:trace}
With reference to Definition \ref{def:alphas},
\[
    \sum_{i \in \X} \alpha_i = {\rm tr}(\Delta).
\]
\end{proposition}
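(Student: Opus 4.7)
The plan is to use the linearity of the trace together with the resolution of the identity $I = \sum_{i \in \X} E_i$, which was recorded right after the definition of primitive idempotents in Section \ref{sec:standardbases}.

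First I would recall from Definition \ref{def:alphas} that $\alpha_i = \text{tr}(E_i \Delta)$ for each $i \in \X$. Summing over $i \in \X$ and pulling the finite sum inside the trace by linearity gives
\[
    \sum_{i \in \X} \alpha_i = \sum_{i \in \X} \text{tr}(E_i \Delta) = \text{tr}\!\left( \sum_{i \in \X} E_i \Delta \right) = \text{tr}\!\left( \Big(\sum_{i \in \X} E_i\Big) \Delta \right).
\]
Now I would substitute $\sum_{i \in \X} E_i = I$ (property (iv) of the primitive idempotents of the multiplicity-free operator $X$, which applies by Theorem \ref{thm:multfree}), obtaining $\text{tr}(I \Delta) = \text{tr}(\Delta)$, which is exactly the claim.

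There is no real obstacle here: the entire content is a one-line trace manipulation, and every ingredient (linearity of trace, the completeness relation $I = \sum_i E_i$, the definition of $\alpha_i$, and the definition of $\Delta$) has already been established above. The only bookkeeping is to remember to use the $E_i$'s (the primitive idempotents of $X$) rather than the $\tilde{E}_i$'s, but this matches the definition of $\alpha_i$.
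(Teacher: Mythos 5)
Your proposal is correct and matches the paper's proof essentially exactly: the paper writes $\Delta = \bigl(\sum_{i \in \X} E_i\bigr)\Delta$ and takes the trace, while you run the same computation in the reverse direction starting from $\sum_{i\in\X}\alpha_i$; both rest on linearity of the trace and the completeness relation $I = \sum_{i\in\X}E_i$. No issues.
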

\begin{proof}
 Recall that $I = \sum_{i \in \X} E_i$. Thus $\Delta = \sum_{i \in \X} E_i \Delta$.  Take the trace of both sides and apply Definition \ref{def:alphas}. The result follows.
\end{proof}

We ultimately want to express $\alpha_i$ in terms of $\{x_i\}_{i\in\X}, \{\tilde{x}_i\}_{i \in \X}$.  To this end, we rewrite Proposition \ref{prop:trace} in the following way.
\begin{corollary}\label{cor:alphasum}
With reference to Definition \ref{def:alphas},
\[\sum_{i \in \X} \alpha_i = \sum_{i \in \X} (x_i - \tilde{x}_i).\]
\end{corollary}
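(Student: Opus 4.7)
The plan is to chain together two results that are already in place. By Proposition \ref{prop:trace}, the sum $\sum_{i \in \X} \alpha_i$ is equal to $\mathrm{tr}(\Delta)$. By equation (\ref{eq:deltatrace}), which was obtained from (\ref{eq:traces}) by writing $\Delta = X - \tilde{X}$ and using linearity of trace, we have $\mathrm{tr}(\Delta) = \sum_{i \in \X}(x_i - \tilde{x}_i)$. Combining these two identities yields the claim.

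There is essentially no obstacle here: the corollary is just a restatement of Proposition \ref{prop:trace} after substituting the explicit formula for $\mathrm{tr}(\Delta)$ in terms of the eigenvalue data. The only thing to verify along the way is the trace formula $\mathrm{tr}(\Delta) = \sum_{i \in \X}(x_i - \tilde{x}_i)$, but this has already been recorded in (\ref{eq:deltatrace}), so it can simply be cited. Accordingly, the write-up should be a one-line deduction referencing Proposition \ref{prop:trace} and equation (\ref{eq:deltatrace}).
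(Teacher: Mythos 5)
Your proposal is correct and matches the paper's own proof exactly: the paper also deduces the corollary by combining Proposition \ref{prop:trace} with equation (\ref{eq:deltatrace}). Nothing further is needed.
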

\begin{proof}
Follows from Proposition \ref{prop:trace} and (\ref{eq:deltatrace}).
\end{proof}

\begin{lemma}\label{lem:eideltaei}
For all $i \in \X$,
\begin{equation}\label{eq:eideltaei}
E_i \Delta E_i = \alpha_i E_i.
\end{equation}
\end{lemma}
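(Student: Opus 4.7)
The plan is to exploit the fact that $E_i$ is a rank-one idempotent. By Theorem \ref{thm:multfree}, $X$ is multiplicity-free, so $E_i V$ is one-dimensional. Consequently $E_i$ has rank $1$, and since $E_i$ is idempotent, $\mathrm{tr}(E_i)=1$.

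The key observation is the general fact that for any rank-one idempotent $E$ and any $A \in \mathrm{End}(V)$, the operator $EAE$ is a scalar multiple of $E$. Indeed, $EAE$ sends $V$ into $EV$, which is one-dimensional and spanned by any nonzero vector in $EV$, and $EAE$ vanishes on $\ker E$; thus $EAE = \lambda E$ for some $\lambda \in \K$. To identify $\lambda$, I would take traces: using $E_i^2 = E_i$ and the cyclic property of trace,
\[
\lambda = \lambda\,\mathrm{tr}(E_i) = \mathrm{tr}(E_i A E_i) = \mathrm{tr}(E_i^2 A) = \mathrm{tr}(E_i A).
\]

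Specializing to $A = \Delta$ and invoking Definition \ref{def:alphas}, this gives $E_i \Delta E_i = \mathrm{tr}(E_i \Delta) E_i = \alpha_i E_i$, as required. There is no real obstacle here; the only point requiring care is justifying that a rank-one idempotent satisfies $\mathrm{tr}(E_i) = 1$ and that $EAE$ is scalar on the one-dimensional image, both of which are immediate from the multiplicity-free property established in Theorem \ref{thm:multfree}.
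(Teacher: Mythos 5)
Your proof is correct and follows essentially the same route as the paper: both establish that $E_i \Delta E_i$ is a scalar multiple of $E_i$ (the paper cites this as a standard fact about the primitive idempotent, while you justify it directly from the rank-one property) and then identify the scalar as $\alpha_i$ by taking traces and using $\mathrm{tr}(E_i)=1$. No gaps.
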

\begin{proof}
Let $\mathcal{A} = \text{End}(V)$.  Since $E_i$ is a primitive idempotent, it follows from linear algebra that the space $E_i \mathcal{A} E_i$ is spanned by $E_i$.  Thus, there exists $\theta_i \in \K$ such that
\begin{equation}\label{eq:theta}
E_i \Delta E_i = \theta_i E_i.
\end{equation}
To compute $\theta_i$, we take the trace of both sides of (\ref{eq:theta}).  The trace of the left side is equal to  $ \text{tr}(E_i \Delta)$, which is equal to $\alpha_i$ by Definition \ref{def:alphas}.  The trace of the right side is equal to $\theta_i$ since $\text{tr}(E_i) = 1$.  Therefore $\theta_i = \alpha_i$.
\end{proof}

\begin{lemma}\label{lem:alphaXstd}
Let $\{v_i\}_{i \in \X}$ be an $X$-standard basis for $V$ and let $\eta = \sum_{i \in \X} v_i$.  Then for $i \in \X$,
\[
\Delta v_i = \alpha_i \eta.
\]
\end{lemma}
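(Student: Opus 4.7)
The plan is to exploit the fact that $\Delta V$ is one-dimensional, so that $\Delta v_i$ is forced to be a scalar multiple of $\eta$, and then to pin down the scalar using Lemma \ref{lem:eideltaei}.

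First I would recall from Definition \ref{def:Xstd} (together with the proof of Lemma \ref{lem:Xstd}) that an $X$-standard basis has the form $v_i = E_i \eta$ for all $i \in \X$, where $\eta = \sum_{j \in \X} v_j$ belongs to $\Delta V$ and is nonzero (as it is the sum defining a basis, and by Lemma \ref{lem:eta} each $E_i \eta$ is nonzero). By Definition \ref{def:delta}, $\Delta V$ is one-dimensional, so $\eta$ spans $\Delta V$. Since $\Delta v_i \in \Delta V$, there exists a unique $c_i \in \K$ with
\[
\Delta v_i = c_i \eta.
\]

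Next I would identify $c_i$ with $\alpha_i$ by applying $E_i$ to both sides. On the right, $E_i (c_i \eta) = c_i E_i \eta = c_i v_i$. On the left, using $v_i = E_i \eta$ and Lemma \ref{lem:eideltaei},
\[
E_i \Delta v_i = E_i \Delta E_i \eta = \alpha_i E_i \eta = \alpha_i v_i.
\]
Equating these and invoking $v_i \neq 0$ yields $c_i = \alpha_i$, which proves the claim.

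There is no real obstacle here; the only subtle point is the justification that $\eta$ spans $\Delta V$, which is immediate once one combines Definition \ref{def:delta} with the fact that $\eta \in \Delta V$ is nonzero. Everything else reduces to a one-line application of Lemma \ref{lem:eideltaei}.
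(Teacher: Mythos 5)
Your proposal is correct and follows essentially the same route as the paper: write $\Delta v_i = c_i \eta$ using that $\eta$ spans the one-dimensional space $\Delta V$, apply $E_i$ to both sides, and identify $c_i = \alpha_i$ via Lemma \ref{lem:eideltaei} and the identity $v_i = E_i\eta$. No gaps.
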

\begin{proof}
By Lemma \ref{lem:Xstd} and Definition \ref{def:cp}(ii), $\eta$ is a basis for $\Delta V$.  Fix $i \in \X$.  Then $\Delta v_i \in \Delta V$, so there exists $\theta_i \in \K$ such that $\Delta v_i = \theta_i \eta$.  Applying $E_i$ to both sides of this equations gives
\begin{equation}\label{eq:theta2}
E_i \Delta v_i = \theta_i E_i \eta.
\end{equation}
To compute $\theta_i$, evaluate the left side of $(\ref{eq:theta2})$ using Lemma \ref{lem:eideltaei} and the fact that $v_i = E_i \eta$.  The result follows.
\end{proof}

\begin{corollary}\label{cor:nonzero}
For all $i \in \X$, $\alpha_i$ is nonzero.
\end{corollary}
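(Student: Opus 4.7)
The plan is to argue by contradiction using Lemma~\ref{lem:alphaXstd} together with Theorem~\ref{thm:commoneval}. Fix $i \in \X$ and an $X$-standard basis $\{v_j\}_{j\in\X}$ for $V$; set $\eta = \sum_{j\in\X} v_j$. By Lemma~\ref{lem:alphaXstd}, we have $\Delta v_i = \alpha_i \eta$, so vanishing of $\alpha_i$ is equivalent to $\Delta v_i = 0$.

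Suppose, for the sake of contradiction, that $\alpha_i = 0$. Then $\Delta v_i = 0$, that is, $(X - \tilde{X})v_i = 0$, so $\tilde{X} v_i = X v_i$. Since $v_i \in E_i V$ we have $X v_i = x_i v_i$, and hence $\tilde{X} v_i = x_i v_i$. Because $v_i$ is a member of a basis, $v_i \ne 0$, so $x_i$ is an eigenvalue of $\tilde{X}$. This contradicts Theorem~\ref{thm:commoneval}, which asserts that $X$ and $\tilde{X}$ share no eigenvalue. Therefore $\alpha_i \ne 0$.

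There is no serious obstacle here: the argument is just the observation that a vector simultaneously annihilated by $X - x_i I$ and by $\Delta$ would be a common eigenvector of $X$ and $\tilde{X}$ with eigenvalue $x_i$, which the no-common-eigenvalue theorem forbids. The only thing to be careful about is that $v_i$ is genuinely nonzero, which is guaranteed by Lemma~\ref{lem:eta} (since $v_i = E_i \eta$ with $\eta \ne 0$ in $\Delta V$), or equivalently by the fact that $\{v_j\}_{j\in\X}$ is a basis.
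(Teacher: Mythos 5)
Your proof is correct, and it opens exactly as the paper does: take an $X$-standard basis, apply Lemma~\ref{lem:alphaXstd} to conclude that $\alpha_i = 0$ forces $\Delta v_i = 0$. The two arguments part ways only at the final step. You observe that $\Delta v_i = 0$ makes $v_i$ a common eigenvector of $X$ and $\tilde{X}$ with eigenvalue $x_i$, contradicting Theorem~\ref{thm:commoneval}; the paper instead notes that $\mathrm{Span}\{v_i\}$ is then invariant under both $X$ and $\tilde{X}$, so by Definition~\ref{def:cp}(iii) it must equal all of $V$, whence $\Delta V = 0$, contradicting the rank-$1$ condition in Definition~\ref{def:cp}(ii). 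Your route is slightly shorter because it leverages a theorem already proved from the irreducibility axiom rather than re-running the invariant-subspace argument; both are equally valid, and there is no circularity since Theorem~\ref{thm:commoneval} predates the definition of the $\alpha_i$. Your care in checking $v_i \ne 0$ is appropriate and correctly justified.
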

\begin{proof}
Suppose there exists $i \in \X$ such that $\alpha_i = 0$.  Let $\{v_i\}_{i \in \X}$ be an $X$-standard basis for $V$.  By Lemma \ref{lem:alphaXstd}, $\Delta v_i = 0$.  By Definition \ref{def:Xstd}, $X v_i = x_i v_i$.  Therefore $\text{Span}\{v_i\}$ is $\Delta$-invariant and $X$-invariant, so it is also $\tilde{X}$-invariant. Since $\text{Span}\{v_i\} \ne 0$,  it follows that $\text{Span}\{v_i\}  = V$ by Definition \ref{def:cp}(iii).  Then $\Delta V = 0$, contradicting Definition \ref{def:cp}(ii).  Therefore $\alpha_i \ne 0$ for all $i \in \X$.
\end{proof}

\begin{definition}\label{def:natural}
\rm
Let $\{v_i\}_{i \in \X}$ be an $X$-standard basis for $V$.  For $A \in \text{End}(V)$, let $A^\natural$ denote the matrix in $\text{Mat}_\X(\K)$ that represents $A$ with respect to $\{v_i\}_{i \in \X}$.  This defines a $\K$-algebra isomorphism $\natural : \text{End}(V) \to \text{Mat}_\X(\K)$ that sends $A \mapsto A^\natural$.
\end{definition}

It follows from Lemma \ref{lem:Xstdunique} that $\natural$ is independent of the choice of $X$-standard basis for $(X, \tilde{X})$.

\begin{lemma}\label{lem:matrep}
For the map $\natural$ from Definition \ref{def:natural}, the following {\rm (i)--(iii)} hold.
\begin{enumerate}
\item[{\rm (i)}] $X^\natural$ is diagonal with $(i,i)$-entry $x_i$ for $i \in \X$.
\item[{\rm (ii)}] $\Delta^\natural$ has $(i,j)$-entry $\alpha_j$ for $i, j \in \X$.
\item[{\rm (iii)}] $\tilde{X}^\natural$ has $(i,j)$-entry $x_i \delta_{ij} - \alpha_j$ for $i,j \in \X$.
\end{enumerate}
\end{lemma}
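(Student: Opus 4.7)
My plan is to read off each matrix column-by-column by applying $X$, $\Delta$, and $\tilde X$ to the basis vectors $v_j$ and expressing the result in the basis $\{v_i\}_{i\in\X}$. The three parts build on each other.

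For part (i), since $\{v_i\}_{i\in\X}$ is $X$-standard, Definition \ref{def:Xstd} gives $v_i = E_i\eta$ for some $0 \ne \eta \in \Delta V$, so $v_i \in E_i V$. Hence $X v_j = x_j v_j$ for each $j \in \X$, which is exactly the statement that $X^\natural$ is diagonal with $(i,i)$-entry $x_i$.

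For part (ii), let $\eta = \sum_{i \in \X} v_i$, as in Lemma \ref{lem:alphaXstd}. That lemma gives $\Delta v_j = \alpha_j \eta = \sum_{i \in \X} \alpha_j v_i$, so the $j$th column of $\Delta^\natural$ has every entry equal to $\alpha_j$, which is the desired form. For part (iii), simply use $\tilde{X} = X - \Delta$ from Definition \ref{def:delta}, so that $\tilde{X}^\natural = X^\natural - \Delta^\natural$; combining (i) and (ii) gives $(\tilde{X}^\natural)_{ij} = x_i \delta_{ij} - \alpha_j$.

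There is no real obstacle here: the lemma is essentially a direct computation using the two facts already established, namely that $v_i \in E_i V$ (from Definition \ref{def:Xstd}) and that $\Delta v_i = \alpha_i \eta$ (from Lemma \ref{lem:alphaXstd}). The only point worth noting is to be careful about column-versus-row conventions in the definition of $A^\natural$ given in Definition \ref{def:natural}: writing $\Delta v_j = \sum_i (\Delta^\natural)_{ij} v_i$ forces the $\alpha_j$ (indexed by the input $j$) to appear in the $j$th column, not the $j$th row, so the formula $(\Delta^\natural)_{ij} = \alpha_j$ is indeed constant in $i$, as claimed.
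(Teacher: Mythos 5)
Your proposal is correct and follows essentially the same route as the paper: part (i) from the definition of an $X$-standard basis, part (ii) from Lemma \ref{lem:alphaXstd}, and part (iii) from $\tilde{X} = X - \Delta$ together with the linearity of $\natural$. The extra care you take with the column convention in Definition \ref{def:natural} is a sensible check but does not change the argument.
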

\begin{proof}
(i) Let $\{v_i\}_{i\in\X}$ be an $X$-standard basis for $V$.  Then $Xv_i = x_i v_i$ by Definition \ref{def:Xstd} and the comments above Definition \ref{def:delta}.  The result follows from Definition \ref{def:natural}.

(ii) By Lemma \ref{lem:alphaXstd}.

(iii)  Follows from (i), (ii) using $\tilde{X} = X - \Delta$.
\end{proof}

\section{Transition matrices}
We continue to study the Cauchy pair $(X, \tilde{X})$ on $V$ with eigenvalue data $(\{x_i\}_{i\in\X}, \{\tilde{x}_i\}_{i\in\X})$. In Sections \ref{sec:standardbases} and \ref{sec:alphas}, we introduced the notion of an $X$-standard basis for $V$ and we the defined scalars $\{\alpha_i\}_{i \in \X}$.  Invoking Definition \ref{def:tilde}, we also have the notion of an $\tilde{X}$-standard basis for $V$ and the scalars $\{\tilde{\alpha}_i\}_{i \in \X}$.  In this section, we compute the transition matrices between an $X$-standard basis and an $\tilde{X}$-standard basis.

\begin{definition}\label{defn:index}
\rm
Let $\{v_i\}_{i \in \X}$ be an $X$-standard basis for $V$ and let $\{w_i\}_{i \in \X}$ be an $\tilde{X}$-standard basis for $V$.  By Lemma \ref{lem:Xstd}, $\sum_{i \in \X} v_i \in \Delta V$ and $\sum_{i \in \X} w_i \in \tilde{\Delta} V = \Delta V$. Therefore there exists $0 \ne \gamma \in \K$ such that
\begin{equation}\label{eq:index}
    \sum_{i \in \X} w_i = \gamma \sum_{i \in \X} v_i.
\end{equation}
We call $\gamma$ the {\it index} of $(X, \tilde{X})$ corresponding to $\{v_i\}_{i \in \X}, \{w_i\}_{i \in \X}$.
\end{definition}

The index $\gamma$ is free in the following sense.
\begin{lemma}\label{lem:freeindex}
Let $\{v_i\}_{i \in \X}$ be an $X$-standard basis for $V$ and let $0 \ne \gamma \in \K$.  Then there exists a unique $\tilde{X}$-standard basis $\{w_i\}_{i \in \X}$ for $V$ such that $\gamma$ is the index of $(X, \tilde{X})$ corresponding to $\{v_i\}_{i \in \X}, \{w_i\}_{i \in \X}$.
\end{lemma}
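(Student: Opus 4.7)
The plan is to explicitly construct the basis using the primitive idempotents $\tilde{E}_i$ of $\tilde{X}$, and then to read off uniqueness from the fact that $\Delta V = \tilde{\Delta} V$ is one-dimensional, so the sum $\sum_i w_i$ of any $\tilde{X}$-standard basis completely determines it.

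First I would set $\eta = \sum_{i\in\X} v_i$. By Lemma \ref{lem:Xstd}(ii) applied to $\{v_i\}_{i\in\X}$, this $\eta$ lies in $\Delta V$, and it is nonzero since the $v_i$ form a basis (or alternatively by Lemma \ref{lem:eta}). Next I would use Definition \ref{def:delta} and Definition \ref{def:tilde} to note that $\tilde{\Delta} V = \Delta V$, so $\gamma \eta$ is a nonzero element of $\tilde{\Delta} V$. Then I would \emph{define} $w_i = \tilde{E}_i(\gamma \eta)$ for each $i \in \X$. By Definition \ref{def:Xstd} applied to the Cauchy pair $(\tilde{X},X)$ (via the convention in Definition \ref{def:tilde}), the family $\{w_i\}_{i\in\X}$ is an $\tilde{X}$-standard basis for $V$. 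Finally, using $I = \sum_{i \in \X}\tilde{E}_i$ and applying both sides to $\gamma\eta$, I would compute
\[
    \sum_{i\in\X} w_i = \sum_{i\in\X}\tilde{E}_i(\gamma\eta) = \gamma\eta = \gamma\sum_{i\in\X} v_i,
\]
which by Definition \ref{defn:index} shows that $\gamma$ is the index corresponding to $\{v_i\}_{i\in\X},\{w_i\}_{i\in\X}$.

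For uniqueness, suppose $\{w_i'\}_{i\in\X}$ is any $\tilde{X}$-standard basis with index $\gamma$. By the $\tilde{X}$-analog of Lemma \ref{lem:Xstd}, the vector $\tilde{\eta}' := \sum_{j\in\X} w_j'$ lies in $\tilde{\Delta}V$, and applying $\tilde{E}_i$ to $\tilde{\eta}' = \sum_{j\in\X} w_j'$ together with $\tilde{E}_i w_j' = \delta_{ij} w_j'$ gives $w_i' = \tilde{E}_i \tilde{\eta}'$. Since $\{w_i'\}$ has index $\gamma$, the defining relation (\ref{eq:index}) forces $\tilde{\eta}' = \gamma\eta$, hence $w_i' = \tilde{E}_i(\gamma\eta) = w_i$ for all $i$.

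There is no real obstacle here: the whole argument rests on the one-dimensionality of $\Delta V = \tilde{\Delta}V$ (Definition \ref{def:cp}(ii) and Definition \ref{def:delta}), which pins down $\tilde{\eta}'$ up to a scalar, combined with the already-established fact (Lemma \ref{lem:eta} applied to $\tilde{X}$) that projecting a nonzero element of $\tilde{\Delta}V$ onto each $\tilde{E}_iV$ always yields a basis. The only mild subtlety is remembering to invoke the $\tilde{}$-convention of Definition \ref{def:tilde} so that Lemma \ref{lem:Xstd} and Lemma \ref{lem:eta} can be applied to $(\tilde{X},X)$.
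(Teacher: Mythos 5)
Your proof is correct and follows essentially the same route as the paper: both construct $w_i = \gamma\tilde{E}_i\eta$ with $\eta = \sum_{i\in\X} v_i \in \Delta V$ and verify the index directly. Your uniqueness argument recovers $w_i' = \tilde{E}_i(\gamma\eta)$ by direct computation, whereas the paper simply cites Lemma \ref{lem:Xstdunique} (any two $\tilde{X}$-standard bases differ by a nonzero scalar, which the index then pins down); the two are interchangeable.
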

\begin{proof}
Let $\eta = \sum_{i \in \X} v_i$.  Observe that $\eta \ne 0$ since $\{v_i\}_{i \in \X}$ are linearly independent. By Lemma \ref{lem:Xstd}(ii), $\eta \in \Delta V$.  Set $w_i = \gamma \tilde{E}_i \eta$ for $i \in \X$.  By applying Lemma \ref{lem:Xstd} to the Cauchy pair $(\tilde{X}, X)$, we see that $\{w_i\}_{i \in \X}$ is an $\tilde{X}$-standard basis for $V$.  Furthermore, $\sum_{i \in \X} w_i = \gamma \eta$.  Therefore $\gamma$ is the index of $(X, \tilde{X})$ corresponding to $\{v_i\}_{i \in \X}, \{w_i\}_{i \in \X}$.

\medskip
The uniqueness assertion follows from Lemma \ref{lem:Xstdunique}.
\end{proof}

Until further notice, we assume the following setup.
\begin{notation}\label{not:standard}
\rm
Fix an $X$-standard basis $\{\epsilon_i\}_{i \in \X}$ for $V$ and an $\tilde{X}$-standard basis $\{\tilde{\epsilon}_i\}_{i \in \X}$ for $V$.  Let $\gamma$ denote the index of $(X, \tilde{X})$ corresponding to $\{\epsilon_i\}_{i\in\X}, \{\tilde{\epsilon}_i\}_{i \in \X}$.  In view of Definition \ref{def:tilde}, let $\tilde{\gamma}$ denote the index of $(\tilde{X}, X)$ corresponding to $\{\tilde{\epsilon}_i\}_{i \in \X}, \{\epsilon_i\}_{i \in \X}$.  Let $T \in \text{Mat}_\X(\K)$ denote the transition matrix from $\{\epsilon_i\}_{i\in\X}$ to $\{\tilde{\epsilon}_i\}_{i\in\X}$.  Let $\tilde{T} \in \text{Mat}_\X(\K)$ denote the transition matrix from $\{\tilde{\epsilon}_i\}_{i \in \X}$ to $\{\epsilon_i\}_{i \in \X}$.
\end{notation}
Observe that $\tilde{\gamma} = \gamma^{-1}$, and that $T$, $\tilde{T}$ are inverses.

\begin{theorem}\label{thm:transmat}
With reference to Notation \ref{not:standard}, the following hold for $i, j \in \X$,
\begin{enumerate}
\item[{\rm (i)}] $T_{ij} = - \gamma\tilde{\alpha}_j / (x_i - \tilde{x}_j)$;
\item[{\rm (ii)}] $\tilde{T}_{ij} = - \gamma^{-1}\alpha_j/(\tilde{x}_i - x_j)$.
\end{enumerate}
\end{theorem}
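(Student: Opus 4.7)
The plan is to derive (i) by studying how $X$ acts on a single vector $\tilde\epsilon_j$ of the $\tilde X$-standard basis, expanded in the $X$-standard basis, and then to obtain (ii) by applying (i) to the Cauchy pair $(\tilde X, X)$ via Lemma \ref{lem:dual} and Definition \ref{def:tilde}.

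To carry out step (i), set $\eta = \sum_{i \in \X} \epsilon_i$ and $\tilde\eta = \sum_{i \in \X} \tilde\epsilon_i$, so $\tilde\eta = \gamma\eta$ by Definition \ref{defn:index}. Applying Lemma \ref{lem:alphaXstd} to the Cauchy pair $(\tilde X, X)$ with $\tilde X$-standard basis $\{\tilde\epsilon_i\}_{i \in \X}$ yields
\[
\tilde\Delta\, \tilde\epsilon_j = \tilde\alpha_j\, \tilde\eta.
\]
Since $\tilde\Delta = \tilde X - X = -\Delta$, this rearranges to $X\tilde\epsilon_j = \tilde x_j \tilde\epsilon_j - \tilde\alpha_j\gamma\eta$, using also $\tilde X\tilde\epsilon_j = \tilde x_j\tilde\epsilon_j$.

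Now expand $\tilde\epsilon_j = \sum_{i \in \X} T_{ij}\epsilon_i$ (the definition of $T$), and apply $X$ to both sides using $X\epsilon_i = x_i\epsilon_i$ (from Definition \ref{def:Xstd}). Substituting $\eta = \sum_{i \in \X}\epsilon_i$ into the identity above and equating coefficients of $\epsilon_i$, since $\{\epsilon_i\}_{i \in \X}$ is a basis, gives
\[
T_{ij}\, x_i \;=\; T_{ij}\, \tilde x_j \;-\; \gamma\tilde\alpha_j.
\]
Since $x_i \ne \tilde x_j$ by Theorem \ref{thm:commoneval}, we may solve for $T_{ij}$ to obtain the formula in (i).

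For (ii), by Lemma \ref{lem:dual}, $(\tilde X, X)$ is a Cauchy pair on $V$, and under the convention of Definition \ref{def:tilde} its eigenvalue data is $(\{\tilde x_i\}_{i \in \X}, \{x_i\}_{i \in \X})$, its $X$-standard basis (in the sense of that Cauchy pair) is the $\tilde X$-standard basis $\{\tilde\epsilon_i\}_{i \in \X}$ of $(X, \tilde X)$, and its corresponding scalars are $\{\alpha_i\}_{i \in \X}$. The transition matrix for $(\tilde X, X)$ playing the role of $T$ is then precisely $\tilde T$, and the corresponding index is $\tilde\gamma = \gamma^{-1}$ (as noted below Notation \ref{not:standard}). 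Applying (i) verbatim to $(\tilde X, X)$ yields $\tilde T_{ij} = -\gamma^{-1}\alpha_j/(\tilde x_i - x_j)$, which is (ii).

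The only real obstacle is keeping the bookkeeping straight when invoking Definition \ref{def:tilde} for part (ii) — in particular, verifying that the index of $(\tilde X, X)$ with respect to $\{\tilde\epsilon_i\}_{i \in \X}, \{\epsilon_i\}_{i \in \X}$ is $\gamma^{-1}$ and that the $\tilde\alpha$-scalars for $(\tilde X, X)$ are the $\alpha$-scalars for $(X, \tilde X)$. Once that symmetry is laid out, part (ii) is a direct translation of (i).
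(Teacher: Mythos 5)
Your proof is correct and follows essentially the same route as the paper: both compute the action of $\Delta$ (equivalently, of $X$) on $\tilde{\epsilon}_j$ in two ways — once via Lemma \ref{lem:alphaXstd} applied to $(\tilde{X},X)$ together with the index relation, and once via the expansion $\tilde{\epsilon}_j = \sum_{i\in\X} T_{ij}\epsilon_i$ — then equate coefficients of $\epsilon_i$ and solve, with part (ii) obtained by applying (i) to $(\tilde{X},X)$. Your extra care in verifying the bookkeeping for Definition \ref{def:tilde} (that $\tilde{\gamma}=\gamma^{-1}$ and the roles of $\alpha_i,\tilde{\alpha}_i$ swap) is a sound elaboration of what the paper leaves implicit.
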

\begin{proof}

(i)  We evaluate $\Delta \tilde{\epsilon}_j$ in two ways.

First, recall that $\Delta = -\tilde{\Delta}$ by Definition \ref{def:delta}.  Use Lemma \ref{lem:alphaXstd} applied to $(\tilde{X}, X)$ and Definition \ref{defn:index} to show that
\begin{equation}\label{eq:way1}
\Delta \tilde{\epsilon}_j = -\gamma \tilde{\alpha}_j \sum_{i \in \X} \epsilon_i.
\end{equation}

Second, we have $\tilde{\epsilon}_j = \sum_{i \in \X} T_{ij} \epsilon_i$. Apply $X$ to both sides to get $X \tilde{\epsilon}_j = \sum_{i \in \X} T_{ij} x_i \epsilon_i$.  Since $\{\tilde{\epsilon}_i\}_{i \in \X}$ is $\tilde{X}$-standard, $\tilde{X} \tilde{\epsilon}_j = \tilde{x}_j \tilde{\epsilon}_j$.  By Definition \ref{def:delta},
\begin{equation}\label{eq:way2}
\Delta \tilde{\epsilon}_j = \left(\sum_{i \in \X} T_{ij} x_i \epsilon_i\right)  - \tilde{x}_j \tilde{\epsilon}_j = \sum_{i \in \X} (x_i - \tilde{x}_j) T_{ij} \epsilon_i.
\end{equation}

Equate the coefficients of $\epsilon_i$ in (\ref{eq:way1}) and (\ref{eq:way2}) and solve for $T_{ij}$ to get (i).

(ii) Apply (i) to the Cauchy pair $(\tilde{X}, X)$.
\end{proof}

\section{Identities involving $\alpha_i, \tilde{\alpha}_i$}\label{section:identities}
We continue to study the Cauchy pair $(X, \tilde{X})$ on $V$ with eigenvalue data $(\{x_i\}_{i\in\X}, \{\tilde{x}_i\}_{i\in\X})$. Recall the matrices $T$ and $\tilde{T}$ from Notation \ref{not:standard}.  In this section, we use $T$ and $\tilde{T}$ to establish a few identities involving the scalars $\{\alpha_i\}_{i \in \X}, \{\tilde{\alpha}_i\}_{i \in \X}$.
\begin{lemma}\label{lem:sumiden}
The following {\rm (i), (ii)} hold for all $j \in \X$.
\begin{enumerate}
\item[{\rm (i)}] $\displaystyle\sum_{i \in \X} \frac{\alpha_i}{x_i - \tilde{x}_j} = 1$;
\item[{\rm (ii)}] $\displaystyle\sum_{i \in \X} \frac{\tilde{\alpha}_i}{\tilde{x}_i - x_j} = 1$.
\end{enumerate}
\end{lemma}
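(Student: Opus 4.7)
The plan is to exploit the row sums of the transition matrices $T$ and $\tilde{T}$ from Notation \ref{not:standard}, computing them in two different ways: once directly via the index $\gamma$, and once via the explicit formulas in Theorem \ref{thm:transmat}. Equating the two expressions will produce the identities.

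First I would compute the row sums of $T$. Set $\eta = \sum_l \epsilon_l$. By the definition of the transition matrix, $\tilde{\epsilon}_k = \sum_l T_{lk}\epsilon_l$, so summing over $k \in \X$ gives $\sum_k \tilde{\epsilon}_k = \sum_l \bigl(\sum_k T_{lk}\bigr)\epsilon_l$. On the other hand, by Definition \ref{defn:index}, $\sum_k \tilde{\epsilon}_k = \gamma\eta = \gamma\sum_l \epsilon_l$. Comparing coefficients using the linear independence of $\{\epsilon_l\}_{l \in \X}$ yields $\sum_k T_{lk} = \gamma$ for every $l \in \X$. Running the same argument with the roles of the two bases exchanged (equivalently, applying Definition \ref{def:tilde}), one gets $\sum_j \tilde{T}_{ij} = \tilde{\gamma} = \gamma^{-1}$ for every $i \in \X$.

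Next, I would substitute the explicit formulas from Theorem \ref{thm:transmat}. For part (ii), plugging $T_{lk} = -\gamma\tilde{\alpha}_k/(x_l - \tilde{x}_k)$ into $\sum_k T_{lk} = \gamma$ and cancelling $\gamma$ gives
\[
\sum_{k \in \X} \frac{\tilde{\alpha}_k}{x_l - \tilde{x}_k} = -1,
\]
equivalently $\sum_{k \in \X} \tilde{\alpha}_k/(\tilde{x}_k - x_l) = 1$, which is (ii) after renaming $l \to j$, $k \to i$. For part (i), plugging $\tilde{T}_{ij} = -\gamma^{-1}\alpha_j/(\tilde{x}_i - x_j)$ into $\sum_j \tilde{T}_{ij} = \gamma^{-1}$ and cancelling $\gamma^{-1}$ gives $\sum_j \alpha_j/(x_j - \tilde{x}_i) = 1$, which is (i) after renaming.

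I do not foresee any obstacle here: everything follows from matching coefficients in a single identity between vectors in $V$, combined with the already-established formulas for $T$ and $\tilde{T}$. The only subtle point is being careful with the direction of the transition matrices and the placement of $\gamma$ versus $\gamma^{-1}$, but this is bookkeeping rather than a genuine difficulty.
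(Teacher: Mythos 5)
Your proof is correct, but it takes a different route from the paper's. The paper derives identity (ii) from the intertwining relation $X^\natural T = T X^\sharp$: it computes the $(j,k)$-entry of both sides using Lemma \ref{lem:matrep} (the matrix representing $X$ in each standard basis) together with Theorem \ref{thm:transmat}, and the identity falls out after cancelling $\tilde{\alpha}_k$. You instead observe that the defining property of the index, $\sum_k \tilde{\epsilon}_k = \gamma \sum_l \epsilon_l$, forces every row sum of $T$ to equal $\gamma$, and then substitute the explicit entries from Theorem \ref{thm:transmat}; the analogous row-sum computation for $\tilde{T}$ (using $\tilde{\gamma}=\gamma^{-1}$, which the paper records after Notation \ref{not:standard}) gives (i). Both arguments lean on Theorem \ref{thm:transmat} for the entries of $T$ and $\tilde{T}$, but yours replaces the matrix-representation bookkeeping of $X^\natural$ and $X^\sharp$ with the single linear-independence comparison $\sum_k T_{lk}=\gamma$, which is arguably more elementary and makes the cancellation of $\gamma$ transparent; the paper's version has the mild advantage of not invoking the index at all beyond what is already packaged into $T$. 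Your sign and index bookkeeping checks out, so there is no gap.
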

\begin{proof}
We first prove (ii). Recall the map $\natural$ from Definition \ref{def:natural}.  With reference to Definition \ref{def:tilde}, we also have a map $\tilde{\natural}$.  To simplify notation, we use $\sharp$ to denote $\tilde{\natural}$.    Recall $T$ from Notation \ref{not:standard}. By linear algebra,
\begin{equation}\label{eq:XnatT}
X^\natural T = T X^\sharp.
\end{equation}
For $k \in \X$, we compute the $(j,k)$-entry of each side of (\ref{eq:XnatT}).  First, we compute the $(j,k)$-entry of $X^\natural T$.  By Lemma \ref{lem:matrep}, Theorem \ref{thm:transmat}, and matrix multiplication,
\begin{equation}\label{eq:MxT}
    (X^\natural T)_{jk} =  - \gamma \frac{x_j \tilde{\alpha}_k}{x_j - \tilde{x}_k}.
\end{equation}
Next, we compute the $(j,k)$-entry of $T X^\sharp$.  By Lemma \ref{lem:matrep} applied to $(\tilde{X}, X)$, Theorem \ref{thm:transmat}, and matrix multiplication,
\begin{equation}\label{eq:TMx}
    (T X^\sharp)_{jk} = -\gamma \left( \frac{\tilde{x}_k \tilde{\alpha}_k}{x_j - \tilde{x}_k} - \tilde{\alpha}_k \sum_{i \in \X} \frac{\tilde{\alpha}_i}{x_j - \tilde{x_i}}\right).\\
\end{equation}
Equate (\ref{eq:MxT}) and (\ref{eq:TMx}) and simplify using Corollary \ref{cor:nonzero} to get (ii).

\medspace
To prove (i), apply (ii) to the Cauchy pair $(\tilde{X}, X)$.
\end{proof}

Recall the linear system (\ref{eq:system}).  By Lemma \ref{lem:sumiden}(i), $\{\alpha_i\}_{i\in\X}$ is a solution to (\ref{eq:system}) when $a_i = x_i$ and $b_i = \tilde{x}_i$ for all $i \in \X$.  By Lemma \ref{lem:sumiden}(ii), $\{\tilde{\alpha}_i\}_{i\in\X}$ is a solution to (\ref{eq:system}) when $a_i = \tilde{x}_i$ and $b_i = x_i$ for all $i \in \X$. In view of Lemma \ref{lem:lambdas}, we obtain the following result.
\begin{corollary}\label{cor:alphaxi}
For all $i \in \X$,
\begin{enumerate}
\item[{\rm (i)}] $\alpha_i = \displaystyle\frac{\prod_{k \in \X} (x_i - \tilde{x}_k)}{\prod_{k \in \X \setminus i} (x_i - x_k)}$;
\item[{\rm (ii)}] $\tilde{\alpha}_i = \displaystyle\frac{\prod_{k \in \X} (\tilde{x}_i - x_k)}{\prod_{k \in \X \setminus i} (\tilde{x}_i - \tilde{x}_k)}$.
\end{enumerate}
\end{corollary}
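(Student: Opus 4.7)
The plan is to obtain both formulas as immediate consequences of Lemma \ref{lem:sumiden} together with Lemma \ref{lem:lambdas}, with no further computation required.

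For part (i), I would observe that Lemma \ref{lem:sumiden}(i) asserts
\[
\sum_{i \in \X} \frac{\alpha_i}{x_i - \tilde{x}_j} = 1 \qquad (j \in \X),
\]
which is precisely the system (\ref{eq:system}) with the substitution $a_i = x_i$ and $b_i = \tilde{x}_i$. The hypotheses of Lemma \ref{lem:lambdas} are satisfied: the scalars $\{x_i\}_{i\in\X}$ are mutually distinct by Theorem \ref{thm:multfree}, the scalars $\{\tilde{x}_i\}_{i\in\X}$ are mutually distinct by the same theorem, and the $x_i$ and $\tilde{x}_j$ are distinct by Theorem \ref{thm:commoneval}. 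By Lemma \ref{lem:lambdas}, the system has the unique solution given by (\ref{eq:lambdas}), and hence $\alpha_i$ must equal
\[
\frac{\prod_{k \in \X}(x_i - \tilde{x}_k)}{\prod_{k \in \X \setminus i}(x_i - x_k)},
\]
proving (i).

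For part (ii), I would apply the same reasoning to the Cauchy pair $(\tilde{X}, X)$, which is a Cauchy pair by Lemma \ref{lem:dual}. Under the notational convention of Definition \ref{def:tilde}, the role of $\alpha_i$ for this pair is played by $\tilde{\alpha}_i$ and the eigenvalue data is $(\{\tilde{x}_i\}_{i \in \X}, \{x_i\}_{i \in \X})$. Then Lemma \ref{lem:sumiden}(ii) (equivalently, Lemma \ref{lem:sumiden}(i) applied to $(\tilde{X}, X)$) shows that $\{\tilde{\alpha}_i\}_{i\in\X}$ solves (\ref{eq:system}) with $a_i = \tilde{x}_i$ and $b_i = x_i$, and Lemma \ref{lem:lambdas} yields the claimed formula.

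There is no serious obstacle here; the entire content of the proof is the recognition that the identities of Lemma \ref{lem:sumiden} are instances of the linear system (\ref{eq:system}) whose unique solution has already been computed in Lemma \ref{lem:lambdas}. The only minor point to verify is that the applicability hypotheses of Lemma \ref{lem:lambdas} (mutual distinctness of the two indexing sets and nonvanishing of the differences $a_i - b_j$) hold, which they do by Theorems \ref{thm:multfree} and \ref{thm:commoneval}.
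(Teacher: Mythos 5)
Your proposal is correct and matches the paper's argument exactly: the paper also derives the corollary by observing that Lemma \ref{lem:sumiden} exhibits $\{\alpha_i\}_{i\in\X}$ (resp. $\{\tilde{\alpha}_i\}_{i\in\X}$) as a solution of the system (\ref{eq:system}) with $a_i = x_i$, $b_i = \tilde{x}_i$ (resp. $a_i = \tilde{x}_i$, $b_i = x_i$), and then invokes the uniqueness of the solution from Lemma \ref{lem:lambdas}. Your additional verification of the distinctness hypotheses via Theorems \ref{thm:multfree} and \ref{thm:commoneval} is a welcome bit of extra care that the paper leaves implicit.
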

\begin{proof}
Follows from Lemma \ref{lem:sumiden} and Lemma \ref{lem:lambdas}.
\end{proof}

\begin{corollary}
Let $C \in {\rm Mat}_\X(\K)$ be Cauchy with data $(\{x_i\}_{i \in \X}, \{\tilde{x}_i\}_{i \in \X})$. Then for all $i \in \X$, $\alpha_i$ is equal to the $i$th column sum of $C^{-1}$ and $-\tilde{\alpha}_i$ is equal to $i$th row sum of $C^{-1}$.
\end{corollary}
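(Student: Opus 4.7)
The plan is to observe that this corollary is a direct consequence of comparing two results already established in the excerpt: Corollary \ref{cor:CinvJ} and Corollary \ref{cor:alphaxi}. Both results give closed-form expressions in terms of the data $(\{x_i\}_{i \in \X}, \{\tilde{x}_i\}_{i \in \X})$, and the expressions match up perfectly on the nose, so no manipulation is needed beyond a side-by-side comparison.

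First, I would fix $i \in \X$ and invoke Corollary \ref{cor:CinvJ}(i), which states that the $i$th column sum of $C^{-1}$ equals $\prod_{k \in \X}(x_i - \tilde x_k) / \prod_{k \in \X \setminus i}(x_i - x_k)$. Next, I would invoke Corollary \ref{cor:alphaxi}(i), which gives exactly the same expression for $\alpha_i$. Equating the two yields that $\alpha_i$ equals the $i$th column sum of $C^{-1}$. The claim about $-\tilde\alpha_i$ and the $i$th row sum follows by the same argument, pairing Corollary \ref{cor:CinvJ}(ii) with Corollary \ref{cor:alphaxi}(ii) and accounting for the minus sign present in the row-sum formula.

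There is no real obstacle here: the computational content lives entirely in the earlier lemmas (the Lagrange-polynomial identity Lemma \ref{lem:lambdas} on the matrix side, and the trace identity Lemma \ref{lem:sumiden} on the Cauchy-pair side), and the only work remaining is to notice that these two independent computations have produced identical rational expressions in the data. Thus the proof reduces to a one-line citation of the two corollaries.
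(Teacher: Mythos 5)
Your proposal is correct and is exactly the paper's argument: the paper's proof also just cites Corollary \ref{cor:CinvJ} and Corollary \ref{cor:alphaxi} and matches the two closed-form expressions, including the sign in the row-sum case. Nothing further is needed.
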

\begin{proof}
Follows from Corollary \ref{cor:CinvJ} and Corollary \ref{cor:alphaxi}.
\end{proof}

We mention another identity involving $\{\alpha_i\}_{i\in\X}, \{\tilde{\alpha}_i\}_{i \in \X}$.
\begin{lemma}\label{lem:iden2}
For all $i \in \X$,
\begin{equation}\label{eq:iden2}
 \sum_{j \in \X} \frac{\alpha_i \tilde{\alpha_j}}{(x_i-\tilde{x}_j)^2} = -1.
\end{equation}
\end{lemma}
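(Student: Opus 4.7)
The plan is to exploit the fact that $T$ and $\tilde{T}$ from Notation \ref{not:standard} are mutual inverses, together with the explicit formulas for their entries given in Theorem \ref{thm:transmat}. Evaluating the diagonal entries of $T\tilde{T}$ should produce exactly the identity (\ref{eq:iden2}).

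Concretely, I would fix $i \in \X$ and compute $(T\tilde{T})_{ii}$ in two ways. On one hand, since $T\tilde{T} = I$, this entry equals $1$. On the other hand, by matrix multiplication and Theorem \ref{thm:transmat},
\[
(T\tilde{T})_{ii} = \sum_{j \in \X} T_{ij}\tilde{T}_{ji} = \sum_{j \in \X} \frac{-\gamma\tilde{\alpha}_j}{x_i - \tilde{x}_j} \cdot \frac{-\gamma^{-1}\alpha_i}{\tilde{x}_j - x_i} = -\sum_{j \in \X}\frac{\alpha_i\tilde{\alpha}_j}{(x_i - \tilde{x}_j)^2},
\]
where the factors $\gamma$ and $\gamma^{-1}$ cancel, and the sign change in the denominator $(\tilde{x}_j - x_i) = -(x_i - \tilde{x}_j)$ combines with the two minus signs to leave an overall factor of $-1$. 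Equating this with $1$ yields (\ref{eq:iden2}).

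There is no real obstacle here: the identity is essentially a bookkeeping consequence of the invertibility relation $T\tilde{T} = I$ already noted just below Notation \ref{not:standard}, combined with the explicit entry formulas. The only step requiring a little care is keeping track of signs and of the index conventions in Theorem \ref{thm:transmat}(ii), where the denominator is $\tilde{x}_i - x_j$ rather than $x_i - \tilde{x}_j$; once that is correctly unpacked the identity falls out in one line. Note that (as a sanity check) one could alternatively compute the diagonal entries of $\tilde{T}T$, which by symmetry (applying the argument to the Cauchy pair $(\tilde{X}, X)$ in view of Definition \ref{def:tilde}) would give the analogous identity with the roles of $\alpha$ and $\tilde\alpha$ swapped and $i$ ranging over $\tilde{x}_i$, confirming the same relation.
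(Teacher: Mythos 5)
Your proposal is correct and is exactly the paper's argument: the paper also computes the $(i,i)$-entry of $T\tilde{T} = I$ using the entry formulas from Theorem \ref{thm:transmat}. The sign bookkeeping in your computation checks out.
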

\begin{proof}
Recall that $T \tilde{T} = I$.  In this equation, compute the $(i,i)$-entry of each side using Theorem \ref{thm:transmat} and matrix multiplication.  The result follows.
\end{proof}

\section{A Bilinear Form}

In this section, we associate a certain bilinear form to each Cauchy pair.

\medskip
By a {\it bilinear form on V}, we mean a map $\langle \ , \ \rangle : V \times V \to \K$ that satisfies the following four conditions for all $u, v, w \in V$ and for all $\theta \in \K$: (i) $\langle u+v,w \rangle = \langle u,w\rangle + \langle v, w \rangle$; (ii) $\langle \theta u, v \rangle = \theta \langle u, v \rangle$; (iii) $\langle u, v+w \rangle = \langle u, v \rangle + \langle u, w\rangle$; (iv) $\langle u, \theta v\rangle = \theta \langle u,v \rangle$. We observe that a scalar multiple of a bilinear form on $V$ is a bilinear form on $V$.  A bilinear form $\langle \ , \ \rangle$ on $V$ is said to be {\it symmetric} whenever $\langle u, v \rangle = \langle v, u\rangle$ for all $u, v \in V$.  Let $\langle \ , \ \rangle$ denote a bilinear form on $V$.  Then the following are equivalent: (i) there exists a nonzero $u \in V$ such that $\langle u, v \rangle = 0$ for all $v \in V$; (ii) there exists a nonzero $v \in V$ such that $\langle u, v\rangle = 0$ for all $u \in V$.  The form $\langle \ , \ \rangle$ is said to be {\it degenerate} whenever (i), (ii) hold and {\it nondegenerate} otherwise.  Given a bilinear form $\langle \ , \ \rangle$ on $V$, we abbreviate $||u||^2 = \langle u, u \rangle$ for all $u \in V$.

\medskip
We continue to study the Cauchy pair $(X, \tilde{X})$ on $V$ with eigenvalue data $(\{x_i\}_{i\in\X}, \{\tilde{x}_i\}_{i\in\X})$.  We will show that there exists a nonzero bilinear form $\langle \ , \ \rangle$ on $V$ such that
\begin{equation}\label{eqs:invform}
    \langle X u, v \rangle = \langle u, X v\rangle, \ \   \langle \tilde{X} u, v \rangle = \langle u, \tilde{X} v \rangle \qquad (u, v \in V).
\end{equation}
Before showing that $\langle \ , \ \rangle$ exists, we investigate the consequences of (\ref{eqs:invform}).

\begin{definition}\rm
By an {\it $(X, \tilde{X})$-invariant form} on $V$, we mean a bilinear form on $V$ that satisfies (\ref{eqs:invform}).
\end{definition}
\begin{lemma}\label{lem:orthogonal}
Let $\langle \ , \ \rangle$ be an $(X, \tilde{X})$-invariant form on $V$.  Then $\{E_i V\}_{i \in \X}$ are mutually orthogonal with respect to $\langle \ , \ \rangle$ and $\{\tilde{E}_i V\}_{i \in \X}$ are mutually orthogonal with respect to $\langle \ , \ \rangle$.
\end{lemma}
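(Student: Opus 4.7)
The plan is to use a standard eigenvalue-separation argument, applied twice: once for $X$ to handle the eigenspaces $\{E_iV\}_{i \in \X}$, and once for $\tilde{X}$ to handle the eigenspaces $\{\tilde{E}_iV\}_{i \in \X}$. By Lemma \ref{lem:dual} and Definition \ref{def:tilde}, the form $\langle \ , \ \rangle$ is also $(\tilde{X}, X)$-invariant, so it suffices to prove the first claim; the second will follow by replacing $(X, \tilde{X})$ with $(\tilde{X}, X)$.

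First I would fix distinct $i, j \in \X$ and pick arbitrary $u \in E_i V$ and $v \in E_j V$. Then $Xu = x_i u$ and $Xv = x_j v$. Using the $X$-invariance of the form, I compute
\[
x_i \langle u, v \rangle = \langle Xu, v \rangle = \langle u, Xv \rangle = x_j \langle u, v \rangle,
\]
so $(x_i - x_j) \langle u, v \rangle = 0$. By Theorem \ref{thm:multfree}, the eigenvalues $\{x_i\}_{i \in \X}$ are mutually distinct, so $x_i - x_j \ne 0$ and hence $\langle u, v \rangle = 0$. Since $u, v$ were arbitrary, this shows that $E_i V$ and $E_j V$ are orthogonal. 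This establishes the first assertion.

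The second assertion, orthogonality of $\{\tilde{E}_i V\}_{i \in \X}$, follows by the same calculation with $\tilde{X}$ in place of $X$, using the $\tilde{X}$-invariance of $\langle \ , \ \rangle$ and the fact that the eigenvalues $\{\tilde{x}_i\}_{i \in \X}$ of $\tilde{X}$ are also mutually distinct by Theorem \ref{thm:multfree}. There is no real obstacle here; the argument is purely formal and relies only on the invariance property together with the multiplicity-freeness established earlier. I would not expect to need nondegeneracy of the form, symmetry, or any information about the rank-one condition on $X - \tilde{X}$ for this particular statement.
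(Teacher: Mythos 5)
Your proof is correct and follows essentially the same route as the paper: fix distinct $i,j$, take $u \in E_iV$, $v \in E_jV$, use the invariance of the form to get $x_i\langle u,v\rangle = x_j\langle u,v\rangle$, and conclude from $x_i \ne x_j$; the second claim is handled the same way with $\tilde{X}$. The only cosmetic difference is that you cite Theorem \ref{thm:multfree} explicitly for the distinctness of the eigenvalues, while the paper simply notes $x_i \ne x_j$ since $i \ne j$.
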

\begin{proof}
Concerning the first claim, pick distinct $i, j \in \X$.  Let $u \in E_i V$ and let $v \in E_j V$.  We show $\langle u, v \rangle = 0$. By (\ref{eqs:invform}),
\[
 x_i \langle u, v \rangle = \langle X u, v \rangle = \langle u, X v \rangle = x_j \langle u, v \rangle.
\]
We have $x_i \ne x_j$ since $i \ne j$.  Therefore $\langle u, v \rangle = 0$.

\medskip
The second claim is similarly shown.
\end{proof}

\begin{lemma}\label{lem:symmetric}
Let $\langle \ , \ \rangle$ be a nonzero $(X, \tilde{X})$-invariant form on $V$.   Then $\langle \ , \ \rangle$ is symmetric and nondegenerate.
\end{lemma}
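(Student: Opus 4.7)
The plan is to split the assertion into two parts and prove each separately, both leveraging Lemma \ref{lem:orthogonal} together with the irreducibility condition of Definition \ref{def:cp}(iii).

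For symmetry, I would pick any $X$-standard basis $\{\epsilon_i\}_{i\in\X}$ for $V$ (which exists by the discussion in Section \ref{sec:standardbases}, and in particular is an $X$-eigenbasis with $\epsilon_i \in E_i V$). By Lemma \ref{lem:orthogonal} applied to the eigenspaces of $X$, we have $\langle \epsilon_i, \epsilon_j\rangle = 0 = \langle \epsilon_j,\epsilon_i\rangle$ whenever $i\neq j$. On the diagonal, $\langle \epsilon_i,\epsilon_i\rangle = \langle \epsilon_i,\epsilon_i\rangle$ trivially. Hence the Gram matrix of $\langle\ ,\ \rangle$ with respect to $\{\epsilon_i\}_{i\in\X}$ is diagonal, and in particular symmetric, so by bilinearity $\langle u,v\rangle = \langle v,u\rangle$ for all $u,v\in V$.

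For nondegeneracy, I would introduce the radical
\[
R = \{\,u \in V : \langle u, v\rangle = 0 \text{ for all } v\in V\,\}.
\]
The key observation is that $R$ is both $X$-invariant and $\tilde X$-invariant: if $u\in R$ then for every $v\in V$ the $(X,\tilde X)$-invariance gives $\langle Xu,v\rangle = \langle u,Xv\rangle = 0$, so $Xu\in R$, and likewise $\tilde X u\in R$. Since $\langle\ ,\ \rangle$ is assumed nonzero, we cannot have $R = V$, so Definition \ref{def:cp}(iii) forces $R = 0$. This means condition (i) in the definition of degeneracy fails, so $\langle\ ,\ \rangle$ is nondegenerate.

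Neither step looks like a real obstacle; the only thing to be careful about is that the irreducibility condition of Definition \ref{def:cp}(iii) requires a proper \emph{nonzero} subspace, which is precisely what lets us rule out $R = V$ using only the hypothesis that the form is nonzero (not any a priori claim about a particular vector). Both steps use exactly the data already in hand: orthogonality of eigenspaces from Lemma \ref{lem:orthogonal} for symmetry, and the invariance of $R$ together with Definition \ref{def:cp}(iii) for nondegeneracy.
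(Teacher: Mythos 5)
Your proposal is correct and follows essentially the same route as the paper: symmetry is obtained by evaluating the form on a basis of $X$-eigenvectors and invoking Lemma \ref{lem:orthogonal} (the paper uses an arbitrary eigenbasis $u_i \in E_iV$ rather than an $X$-standard one, which changes nothing), and nondegeneracy is obtained by showing the radical is $X$- and $\tilde{X}$-invariant and proper, then applying Definition \ref{def:cp}(iii). No gaps.
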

\begin{proof}
First, we show that $\langle \ , \ \rangle$ is symmetric. For $i \in \X$, let $0 \ne u_i \in E_i V$ and note that $\{u_i\}_{i \in \X}$ is a basis for $V$.  It suffices to show that $\langle u_i , u_j \rangle = \langle u_j, u_i \rangle$ for all $i, j\in \X$.   The case when $i = j$ is clear.  For $i \ne j$, $\langle u_i, u_j\rangle = 0$ by Lemma \ref{lem:orthogonal}.  Thus $\langle \ , \ \rangle$ is symmetric.

\medskip
Next, we show that $\langle \ , \ \rangle$ is nondegenerate.  Let $V^\perp = \{v \in V :  \langle v, w \rangle = 0 \ \ \forall w \in V\}$.  We show that $V^\perp = 0$.    To do this, we show that $V^\perp \ne V, X V^\perp \subseteq V^\perp$, and $\tilde{X}V^\perp \subseteq V^\perp$.  Since $\langle \ ,\ \rangle$ is nonzero, $V^\perp \ne V$.  Let $v \in V^\perp$.  By (\ref{eqs:invform}), for all $w \in V$, $\langle X v, w \rangle = \langle v, X w\rangle = 0$ and $\langle \tilde{X} v, w \rangle = \langle v, \tilde{X} w\rangle = 0$.  Therefore $X w, \tilde{X}w \in V^\perp$, so $X V^\perp \subseteq V^\perp$ and $\tilde{X} V^\perp \subseteq V^\perp$.  By Definition \ref{def:cp}(iii), $V^\perp = 0$.
\end{proof}

\begin{lemma}\label{lem:nonzeronorms}
Let $\langle \ , \ \rangle$ be a nonzero $(X, \tilde{X})$-invariant form on $V$.  Let $\{u_i\}_{i \in \X}$ denote an $X$-standard basis for $V$.  Then $||u_i||^2 \ne 0$ for all $i \in \X$.
\end{lemma}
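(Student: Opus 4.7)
The plan is to argue by contradiction, combining the orthogonality result from Lemma \ref{lem:orthogonal} with the nondegeneracy established in Lemma \ref{lem:symmetric}. Since $\{u_i\}_{i\in\X}$ is an $X$-standard basis, Definition \ref{def:Xstd} gives $u_i \in E_i V$ for every $i \in \X$, and each $u_i$ is nonzero (being a basis element).

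First I would fix $i \in \X$ and suppose, for contradiction, that $\|u_i\|^2 = 0$, i.e.\ $\langle u_i, u_i\rangle = 0$. Then for any $j \in \X$ with $j \ne i$, Lemma \ref{lem:orthogonal} gives $\langle u_i, u_j\rangle = 0$ because $u_i \in E_i V$ and $u_j \in E_j V$ lie in distinct summands of the orthogonal decomposition $V = \bigoplus_{k \in \X} E_k V$. Combined with the standing assumption $\langle u_i, u_i\rangle = 0$, this shows $\langle u_i, u_j\rangle = 0$ for every $j \in \X$.

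Next I would use that $\{u_j\}_{j \in \X}$ is a basis for $V$: every $v \in V$ is a $\K$-linear combination of the $u_j$, and bilinearity in the second slot then yields $\langle u_i, v\rangle = 0$ for all $v \in V$. Since $u_i \ne 0$, this places $u_i$ in the radical of $\langle\ ,\ \rangle$, contradicting the nondegeneracy of $\langle\ ,\ \rangle$ guaranteed by Lemma \ref{lem:symmetric}. Therefore $\|u_i\|^2 \ne 0$, and since $i$ was arbitrary the conclusion follows.

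There is essentially no obstacle here; the only subtlety is remembering that Lemma \ref{lem:symmetric} already gives nondegeneracy for any nonzero $(X,\tilde{X})$-invariant form, so we do not need any additional hypothesis on $\langle\ ,\ \rangle$ beyond what is stated.
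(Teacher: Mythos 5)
Your proof is correct and follows essentially the same route as the paper: assume $\|u_i\|^2 = 0$, use Lemma \ref{lem:orthogonal} to get $\langle u_i, u_j\rangle = 0$ for all $j$, extend by bilinearity over the basis to conclude $u_i$ lies in the radical, and contradict the nondegeneracy from Lemma \ref{lem:symmetric}. No gaps.
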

\begin{proof}
Suppose there exists $i \in \X$ such that $||u_i||^2 = 0$.  Then by Lemma \ref{lem:orthogonal}, $\langle u_i, u_j\rangle = 0$ for all $j \in \X$. Since $\{u_i\}_{i \in \X}$ is a basis for $V$, $\langle u_i, u \rangle = 0$ for all $u \in V$.  Thus $\langle \ , \ \rangle$ is degenerate, contradicting Lemma \ref{lem:symmetric}.  Therefore $||u_i||^2 \ne 0$ for all $i \in \X$.
\end{proof}

\begin{theorem}\label{thm:formexistance}
There exists a nonzero $(X, \tilde{X})$-invariant form $\langle \ , \ \rangle$ on $V$.  Moreover, this form is unique up to multiplication by a nonzero scalar in $\K$.
\end{theorem}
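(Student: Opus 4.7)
The plan is to prove existence by explicit construction on an $X$-standard basis, then use the strong constraints imposed by both invariance conditions to get uniqueness up to scalar. Throughout, I would fix an $X$-standard basis $\{\epsilon_i\}_{i \in \X}$ for $V$ (which exists by Lemma \ref{lem:eta} and Definition \ref{def:Xstd}).

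For existence, I would define a bilinear form $\langle\ ,\ \rangle$ on $V$ by specifying its values on the basis:
\[
\langle \epsilon_i, \epsilon_j \rangle = \alpha_i \delta_{ij} \qquad (i, j \in \X),
\]
and extending bilinearly. By Corollary \ref{cor:nonzero} the scalars $\alpha_i$ are all nonzero, so $\langle\ ,\ \rangle$ is nonzero. The condition $\langle X u, v \rangle = \langle u, X v \rangle$ is immediate on basis vectors: $\langle X \epsilon_i, \epsilon_j \rangle = x_i \alpha_i \delta_{ij} = x_j \alpha_j \delta_{ij} = \langle \epsilon_i, X \epsilon_j \rangle$. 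For the $\tilde{X}$-invariance, I would combine Lemma \ref{lem:alphaXstd} with $\tilde{X} = X - \Delta$ to get $\tilde{X}\epsilon_k = x_k \epsilon_k - \alpha_k \sum_{\ell \in \X} \epsilon_\ell$, and then compute
\[
\langle \tilde{X}\epsilon_i, \epsilon_j \rangle = x_i \alpha_i \delta_{ij} - \alpha_i \alpha_j, \qquad \langle \epsilon_i, \tilde{X}\epsilon_j \rangle = x_j \alpha_j \delta_{ij} - \alpha_j \alpha_i,
\]
which are equal (using $x_i\alpha_i\delta_{ij} = x_j\alpha_j\delta_{ij}$). By bilinearity, the two invariance identities extend to all of $V \times V$.

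For uniqueness, suppose $\langle\ ,\ \rangle'$ is any nonzero $(X, \tilde{X})$-invariant form on $V$. By Lemma \ref{lem:orthogonal} applied to $\langle\ ,\ \rangle'$, the basis vectors $\{\epsilon_i\}_{i \in \X}$ are mutually orthogonal, so $\langle\ ,\ \rangle'$ is determined by the diagonal values $\nu_i := \langle \epsilon_i, \epsilon_i\rangle'$. Applying $\tilde{X}$-invariance to $\epsilon_i, \epsilon_j$ with $i \ne j$ and expanding via $\tilde{X}\epsilon_k = x_k \epsilon_k - \alpha_k \sum_\ell \epsilon_\ell$ yields $-\alpha_i \nu_j = -\alpha_j \nu_i$, hence $\nu_i/\alpha_i$ is constant in $i$, say equal to $c$. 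Then $\langle\ ,\ \rangle' = c \langle\ ,\ \rangle$, and $c \ne 0$ since otherwise $\langle\ ,\ \rangle'$ would vanish identically.

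There is no serious obstacle here; the one idea to spot is that the $X$-standard basis is exactly the right coordinate system because it makes $\Delta$ act with uniform row $(\alpha_1, \alpha_2, \dots)$ pattern (Lemma \ref{lem:matrep}(ii)), so the $\tilde{X}$-compatibility equation becomes the simple proportionality $\nu_i/\alpha_i = \text{const}$. Once one tries $\nu_i = \alpha_i$, everything falls out with a short symmetric computation, and the same computation read in reverse yields uniqueness.
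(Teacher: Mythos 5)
Your proposal is correct. The existence half is essentially identical to the paper's: both define the form on an $X$-standard basis by $\langle \epsilon_i, \epsilon_j\rangle = \delta_{ij}\alpha_i$ and verify the two invariance identities on basis vectors, the paper phrasing the second check in terms of $\Delta$ via Lemma \ref{lem:alphaXstd} where you expand $\tilde{X}\epsilon_k$ explicitly --- the same computation. The uniqueness half is where you genuinely diverge. The paper normalizes a second invariant form $\langle\ ,\ \rangle'$ so that it agrees with $\langle\ ,\ \rangle$ at a single diagonal entry (using Lemma \ref{lem:nonzeronorms} to know that entry is nonzero), forms the difference, observes the difference is an invariant form that is degenerate, and concludes it vanishes by the nondegeneracy statement of Lemma \ref{lem:symmetric}. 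You instead show directly that \emph{any} invariant form is diagonal in the $X$-standard basis (Lemma \ref{lem:orthogonal}) and that the $\tilde{X}$-invariance forces the diagonal entries $\nu_i$ to satisfy $\alpha_i\nu_j = \alpha_j\nu_i$, i.e.\ $\nu_i = c\,\alpha_i$ for a constant $c$ (trivially so when $|\X|=1$). Your route is more computational but also more self-contained: it bypasses Lemmas \ref{lem:symmetric} and \ref{lem:nonzeronorms} entirely and exhibits the one-parameter family of invariant forms explicitly, whereas the paper's route is structural and reuses the nondegeneracy machinery it has already built. Both are complete and correct.
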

\begin{proof}
Let $\{u_i\}_{i \in \X}$ be an $X$-standard basis for $V$.  Recall the scalars $\{\alpha_i\}_{i \in \X}$ from Definition \ref{def:alphas}. Define a bilinear form $\langle \ , \ \rangle$ on $V$ such that $\langle u_i, u_j\rangle = \delta_{ij}\alpha_i$ for $i, j \in \X$.  We show that $\langle \ , \ \rangle$ satisfies (\ref{eqs:invform}).

\medskip
Concerning the equation on the left in (\ref{eqs:invform}), with reference to Definition \ref{def:Xstd},
\[
\langle X u_i, u_j \rangle = x_i \alpha_i \delta_{ij}  = \langle u_i, X u_j \rangle \qquad (i, j \in \X).
\]

Concerning the equation on the right in (\ref{eqs:invform}), by Definition \ref{def:delta}, it suffices to show that  $\langle \Delta u_i, u_j \rangle = \langle u_i, \Delta u_j \rangle$ for all $i, j \in \X$.
By Lemma \ref{lem:alphaXstd},
\[
\left\langle \Delta u_i, u_j \right\rangle =  \alpha_i \alpha_j = \left\langle u_i, \Delta u_j \right\rangle \qquad (i, j \in \X).
\]
We have now shown that $\langle \ ,\  \rangle$ satisfies (\ref{eqs:invform}), so $\langle \ ,\ \rangle$ is $(X, \tilde{X})$-invariant.

\medskip
Next, we show that $\langle \ , \ \rangle$ is unique up to multiplication by a nonzero scalar in $\K$.  Suppose that $\langle \ , \ \rangle^\prime$ is a nonzero  $(X, \tilde{X})$-invariant form on $V$.  We show that there exists $0 \ne c \in \K$ such that $\langle \ , \ \rangle = c \langle \ , \ \rangle^\prime$.

\medskip
Pick $i \in \X$.  By Lemma \ref{lem:nonzeronorms}, $\langle u_i , u_i \rangle \ne 0$ and $\langle u_i, u_i\rangle^\prime \ne 0$.  Thus, there exists $0 \ne c \in \K$ such that $\langle u_i, u_i\rangle = c \langle u_i, u_i\rangle^\prime$.  Define a bilinear form $( \ , \ )$ on $V$ such that $( u , v ) =  \langle u , v \rangle - c \langle u, v \rangle^\prime$ for all $u, v \in V$.  It is easily verified that $( \ , \ )$ satisfies (\ref{eqs:invform}). By construction, $(u_i, u) = 0$ for all $u \in V$, so $(\ , \ )$ is degenerate. Therefore $(\ , \ ) = 0$ by Lemma \ref{lem:symmetric}.  Consequently, $\langle \ , \ \rangle = c \langle \ , \ \rangle^\prime$.
\end{proof}

Until further notice, we fix a nonzero $(X, \tilde{X})$-invariant form $\langle \ , \ \rangle$ on $V$.

\medskip
 Recall the $X$-standard basis $\{\epsilon_i\}_{i\in \X}$, the $\tilde{X}$-standard basis $\{\tilde{\epsilon}_i\}_{i \in \X}$, and the index $\gamma$ from Notation \ref{not:standard}. Recall the scalars $\{\alpha_i\}_{i \in \X}, \{\tilde{\alpha}_i\}_{i \in \X}$ from Definition \ref{def:alphas}.
\begin{lemma}\label{lem:norms}
The following {\rm (i), (ii)} hold.
\begin{enumerate}
\item[{\rm (i)}] $||\epsilon_i||^2/ \alpha_i$ is independent of $i$ for $i \in \X$.
\item[{\rm (ii)}] $||\tilde{\epsilon}_i||^2 / \tilde{\alpha}_i$ is independent of $i$ for $i \in \X$.
\end{enumerate}
We let $\rho$ denote the common value of $||\epsilon_i||^2/\alpha_i$ and let $\tilde{\rho}$ denote the common value of $||\tilde{\epsilon}_i||^2/\tilde{\alpha}_i$.
\end{lemma}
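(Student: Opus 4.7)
The plan is to prove part (i) by exploiting the $(X,\tilde X)$-invariance of $\langle\ ,\ \rangle$ applied to $\Delta = X-\tilde X$, together with Lemma \ref{lem:alphaXstd} and the orthogonality in Lemma \ref{lem:orthogonal}; then obtain part (ii) from part (i) by duality via Definition \ref{def:tilde} and Lemma \ref{lem:dual}.

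Concretely, let $\eta = \sum_{k\in\X}\epsilon_k$. Since $\{\epsilon_i\}_{i\in\X}$ is an $X$-standard basis, Lemma \ref{lem:alphaXstd} gives $\Delta\epsilon_i = \alpha_i\eta$ for every $i\in\X$. Because $\langle\ ,\ \rangle$ is $(X,\tilde X)$-invariant and $\Delta = X-\tilde X$, the form satisfies $\langle \Delta u, v\rangle = \langle u,\Delta v\rangle$ for all $u,v\in V$. Applying this with $u=\epsilon_i$ and $v=\epsilon_j$, I would obtain
\[
\alpha_i\,\langle \eta,\epsilon_j\rangle \;=\; \langle \Delta\epsilon_i,\epsilon_j\rangle \;=\; \langle\epsilon_i,\Delta\epsilon_j\rangle \;=\; \alpha_j\,\langle\epsilon_i,\eta\rangle.
\]
Expanding $\eta=\sum_k\epsilon_k$ and invoking Lemma \ref{lem:orthogonal} to kill all cross terms (since $\epsilon_k\in E_kV$ and $\epsilon_j\in E_jV$ are orthogonal whenever $k\ne j$), the left side collapses to $\alpha_i\,\|\epsilon_j\|^2$ and the right side to $\alpha_j\,\|\epsilon_i\|^2$. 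Dividing by $\alpha_i\alpha_j$, which is nonzero by Corollary \ref{cor:nonzero}, yields
\[
\frac{\|\epsilon_i\|^2}{\alpha_i} \;=\; \frac{\|\epsilon_j\|^2}{\alpha_j},
\]
proving (i).

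For (ii), I would apply (i) to the Cauchy pair $(\tilde X, X)$ from Lemma \ref{lem:dual}, using the notational convention of Definition \ref{def:tilde}: the same bilinear form $\langle\ ,\ \rangle$ is $(\tilde X, X)$-invariant (the defining equations are symmetric in $X,\tilde X$), $\{\tilde\epsilon_i\}_{i\in\X}$ is $\tilde X$-standard, and the associated scalars for $(\tilde X,X)$ are precisely $\{\tilde\alpha_i\}_{i\in\X}$. The same argument then gives that $\|\tilde\epsilon_i\|^2/\tilde\alpha_i$ is independent of $i$.

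No real obstacle is expected; the only subtlety is recognizing that the orthogonality from Lemma \ref{lem:orthogonal} immediately reduces the inner products $\langle\eta,\epsilon_j\rangle$ and $\langle\epsilon_i,\eta\rangle$ to the diagonal norms, so that the identity produced by $(X,\tilde X)$-invariance of $\Delta$ is exactly the ratio identity we want. The definitions of $\rho$ and $\tilde\rho$ stated after (ii) are then just names for these common values and require nothing further to justify.
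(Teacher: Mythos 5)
Your proof is correct and takes essentially the same approach as the paper: the paper applies the invariance identity to $\tilde{X}$ (whose action on the $X$-standard basis is $\tilde{X}\epsilon_i = x_i\epsilon_i - \alpha_i\sum_k\epsilon_k$, the diagonal part contributing nothing for $i\ne j$), arriving at the same relation $\alpha_i||\epsilon_j||^2 = \alpha_j||\epsilon_i||^2$ that you obtain by using $\Delta$ directly, and part (ii) is likewise handled by symmetry.
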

\begin{proof}
(i) Let $i, j \in \X$ with $i \ne j$.  We show that $||\epsilon_i||^2/\alpha_i = ||\epsilon_j||^2/\alpha_j$.  By (\ref{eqs:invform}), $\langle \tilde{X} \epsilon_i, \epsilon_j \rangle = \langle \epsilon_i, \tilde{X} \epsilon_j\rangle$.  By Lemma \ref{lem:matrep}, $\tilde{X}\epsilon_i = x_i \epsilon_i - \alpha_i \sum_{k \in \X} \epsilon_k$.  Using this and Lemma \ref{lem:orthogonal}, $\langle \tilde{X} \epsilon_i, \epsilon_j \rangle = -\alpha_i ||\epsilon_j||^2$.  Similarly, $\langle \epsilon_i, \tilde{X} \epsilon_j\rangle = - \alpha_j ||\epsilon_i||^2$.  The result follows.

\medskip
(ii) Similar.
\end{proof}

\begin{lemma}\label{lem:epsilons}
With reference to Lemma \ref{lem:norms}, $\tilde{\rho} = -\rho \gamma^2 $.
\end{lemma}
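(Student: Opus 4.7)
The plan is to compute $\|\tilde{\epsilon}_j\|^2$ in two ways: directly using Lemma \ref{lem:norms}(ii), which gives $\tilde{\rho}\tilde{\alpha}_j$, and by expanding $\tilde{\epsilon}_j$ in the $X$-standard basis $\{\epsilon_i\}_{i\in\X}$ using the transition matrix $T$ from Notation \ref{not:standard}. Equating the two expressions should produce the claimed identity.

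First I would fix $j \in \X$ and write $\tilde{\epsilon}_j = \sum_{i \in \X} T_{ij}\epsilon_i$. Applying the bilinear form and invoking Lemma \ref{lem:orthogonal} (the basis $\{\epsilon_i\}_{i\in\X}$ lies in the mutually orthogonal eigenspaces $\{E_i V\}_{i \in \X}$), the cross terms drop out and we obtain
\[
\|\tilde{\epsilon}_j\|^2 = \sum_{i \in \X} T_{ij}^2 \|\epsilon_i\|^2.
\]
Next I would substitute $\|\epsilon_i\|^2 = \rho\alpha_i$ from Lemma \ref{lem:norms}(i) and the explicit formula $T_{ij} = -\gamma\tilde{\alpha}_j/(x_i - \tilde{x}_j)$ from Theorem \ref{thm:transmat}(i). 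This yields
\[
\|\tilde{\epsilon}_j\|^2 = \rho\gamma^2\tilde{\alpha}_j \sum_{i \in \X} \frac{\alpha_i\tilde{\alpha}_j}{(x_i - \tilde{x}_j)^2}.
\]

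The key step is now to recognize the inner sum. Applying Lemma \ref{lem:iden2} to the Cauchy pair $(\tilde{X}, X)$ (and using that $(x_i - \tilde{x}_j)^2 = (\tilde{x}_j - x_i)^2$) gives $\sum_{i \in \X} \alpha_i\tilde{\alpha}_j/(x_i - \tilde{x}_j)^2 = -1$. Hence $\|\tilde{\epsilon}_j\|^2 = -\rho\gamma^2\tilde{\alpha}_j$. Dividing by $\tilde{\alpha}_j$ (which is nonzero by Corollary \ref{cor:nonzero} applied to $(\tilde{X}, X)$) and comparing with Lemma \ref{lem:norms}(ii) gives $\tilde{\rho} = -\rho\gamma^2$, as desired.

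The main obstacle is just recognizing that Lemma \ref{lem:iden2} is the right identity to invoke, with indices swapped so that the sum runs over $i$ rather than $j$; after that, the argument is a bookkeeping exercise combining orthogonality, the explicit form of $T$, and the definitions of $\rho$ and $\tilde{\rho}$.
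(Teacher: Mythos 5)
Your proposal is correct and follows essentially the same route as the paper: compute $\|\tilde{\epsilon}_j\|^2$ once via Lemma \ref{lem:norms}(ii) and once by expanding in the $X$-standard basis with Theorem \ref{thm:transmat}(i), Lemma \ref{lem:orthogonal}, and Lemma \ref{lem:iden2}, then cancel $\tilde{\alpha}_j \ne 0$. Your explicit remark that Lemma \ref{lem:iden2} must be applied to $(\tilde{X},X)$ with indices swapped is a small point of care that the paper glosses over.
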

\begin{proof}
Fix $j \in \X$. We evaluate $||\tilde{\epsilon}_j||^2$ in two ways.  By Lemma \ref{lem:norms}, $||\tilde{\epsilon}_j||^2 = \tilde{\rho}\tilde{\alpha}_j$.

\medskip
Recall the matrix $T$ from Notation \ref{not:standard}. Observe that $\tilde{\epsilon}_j = \sum_{i \in \X} T_{ij} \epsilon_i$, so
\begin{align*}
||\tilde{\epsilon}_j||^2 &= \sum_{i \in \X} T_{ij}^2 ||\epsilon_i||^2 &\text{by Lemma \ref{lem:orthogonal}}\\
				&= \sum_{i \in \X} \left(\frac{-\gamma \tilde{\alpha}_j}{x_i-\tilde{x}_j}\right)^2 ||\epsilon_i||^2 &\text{by Theorem \ref{thm:transmat}}\\
				&= \rho \gamma^2 \tilde{\alpha}_j \sum_{i \in \X} \frac{\alpha_i \tilde{\alpha}_j}{(x_i - \tilde{x}_j)^2} &\text{by Lemma \ref{lem:norms}}\\
				&= -\rho \gamma^2 \tilde{\alpha}_j &\text{by Lemma \ref{lem:iden2}}.
\end{align*}
The result follows since $\tilde{\alpha}_j \ne 0$ by Corollary \ref{cor:nonzero}.

\end{proof}

\begin{lemma}\label{lem:innerprods}
The following {\rm (i)--(iii)} hold for $i, j \in \X$.
\begin{enumerate}
\item[{\rm(i)}] $\langle \epsilon_i, \tilde{\epsilon}_j \rangle / ||\epsilon_i||^2 = -\gamma\tilde{\alpha}_j / (x_i - \tilde{x}_j)$;
\item[{\rm(ii)}] $\langle \epsilon_i, \tilde{\epsilon}_j \rangle / ||\tilde{\epsilon}_j||^2 = \gamma^{-1}\alpha_i / (x_i - \tilde{x}_j)$;
\item[{\rm(iii)}] $\langle \epsilon_i, \tilde{\epsilon}_j\rangle^2 / ||\epsilon_i||^2 ||\tilde{\epsilon}_j||^2 = -\alpha_i \tilde{\alpha}_j / (x_i-\tilde{x}_j)^2$.
\end{enumerate}
\end{lemma}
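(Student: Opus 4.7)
The plan is to prove each of the three identities by exploiting the orthogonality of standard bases (Lemma \ref{lem:orthogonal}) together with the explicit entries of the transition matrices in Theorem \ref{thm:transmat}. Parts (i) and (ii) are dual to each other, and part (iii) is obtained by multiplying them.

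For part (i), I would begin from $\tilde{\epsilon}_j = \sum_{k \in \X} T_{kj}\,\epsilon_k$ (Notation \ref{not:standard}) and take the inner product with $\epsilon_i$. By Lemma \ref{lem:orthogonal}, $\langle \epsilon_i, \epsilon_k\rangle = 0$ whenever $k \ne i$, so only the $k=i$ summand survives and we get $\langle \epsilon_i, \tilde{\epsilon}_j\rangle = T_{ij}\,\|\epsilon_i\|^2$. Since $\|\epsilon_i\|^2 \ne 0$ by Lemma \ref{lem:nonzeronorms}, dividing and substituting the value of $T_{ij}$ from Theorem \ref{thm:transmat}(i) yields the claim.

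For part (ii), I would argue symmetrically by expanding $\epsilon_i = \sum_{k \in \X} \tilde{T}_{ki}\,\tilde{\epsilon}_k$ and pairing against $\tilde{\epsilon}_j$. The $\tilde{X}$-standard basis is mutually orthogonal (the second statement of Lemma \ref{lem:orthogonal}), so $\langle \epsilon_i, \tilde{\epsilon}_j\rangle = \tilde{T}_{ji}\,\|\tilde{\epsilon}_j\|^2$. Applying Theorem \ref{thm:transmat}(ii) gives $\tilde{T}_{ji} = -\gamma^{-1}\alpha_i/(\tilde{x}_j - x_j)$, and the sign flip from $\tilde{x}_j - x_i = -(x_i - \tilde{x}_j)$ produces the positive sign in the stated formula.

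For part (iii), I would simply multiply the identities in (i) and (ii); the two factors of $\gamma$ cancel, the two denominators combine to $(x_i - \tilde{x}_j)^2$, and the sign from (i) survives to give $-\alpha_i \tilde{\alpha}_j/(x_i-\tilde{x}_j)^2$. There is no real obstacle here: the content of the lemma is essentially bookkeeping, and all the heavy lifting has already been done in establishing Lemmas \ref{lem:orthogonal}, \ref{lem:nonzeronorms} and Theorem \ref{thm:transmat}. The only thing to be careful about is keeping the sign conventions straight when passing between $T$ and $\tilde{T}$ (equivalently, between $x_i - \tilde{x}_j$ and $\tilde{x}_j - x_i$).
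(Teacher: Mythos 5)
Your proposal is correct and matches the paper's proof essentially verbatim: expand $\tilde{\epsilon}_j$ in the $\epsilon$-basis (resp.\ $\epsilon_i$ in the $\tilde{\epsilon}$-basis), kill all but one term by Lemma \ref{lem:orthogonal}, substitute the entries from Theorem \ref{thm:transmat}, and multiply (i) and (ii) to get (iii). The only blemish is a typo in part (ii), where the denominator of $\tilde{T}_{ji}$ should read $\tilde{x}_j - x_i$ rather than $\tilde{x}_j - x_j$; your subsequent sign manipulation uses the correct expression, so the argument is unaffected.
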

\begin{proof}
(i)  Recall the matrix $T$ from Notation \ref{not:standard}.  Observe that $\tilde{\epsilon}_j = \sum_{i \in \X} T_{ij} \epsilon_i$, so $\langle \epsilon_i, \tilde{\epsilon}_j\rangle = T_{ij} ||\epsilon_i||^2$ by Lemma \ref{lem:orthogonal}.  The result follows from this and Theorem \ref{thm:transmat}(i).

(ii) Recall the matrix $\tilde{T}$ from Notation \ref{not:standard}.  Observe that $\epsilon_i = \sum_{j \in \X}\tilde{T}_{ji}\tilde{\epsilon}_j$, so $\langle \epsilon_i, \tilde{\epsilon}_j \rangle = \tilde{T}_{ji} ||\tilde{\epsilon}_j||^2$ by Lemma \ref{lem:orthogonal}.  The result follows from this and Theorem \ref{thm:transmat}(ii).

(iii) Follows from (i) and (ii).
\end{proof}

\noindent Let $\{u_i\}_{i \in \X}$ and $\{u_i^\prime\}_{i \in \X}$ denote bases for $V$.  We say that $\{u_i\}_{i \in \X}$ and $\{u_i^\prime\}_{i \in \X}$ are {\it dual} with respect to $\langle \ , \ \rangle$ whenever $\langle u_i, u_j^\prime\rangle = \delta_{ij}$ for $i,j \in \X$.  Given a basis $\{u_i\}_{i \in \X}$ for $V$, there exists a unique basis for $V$ that is dual to $\{u_i\}_{i \in \X}$ with respect to $\langle \ , \ \rangle$.  We denote this basis by $\{u_i^\ast\}_{i \in \X}$.

\begin{notation}\label{not:duals}\rm
Let $\{\epsilon_i^\ast\}_{i \in \X}$ denote the basis for $V$ that is dual to $\{\epsilon_i\}_{i \in \X}$ with respect to $\langle \ , \ \rangle$.  Let $\{\tilde{\epsilon}_i^\ast\}_{i \in \X}$ denote the basis for $V$ that is dual to $\{\tilde{\epsilon}_i\}_{i \in \X}$ with respect to $\langle \ , \ \rangle$.
\end{notation}

\begin{lemma}\label{lem:duals}
For $i \in \X$, the following {\rm (i), (ii)} hold.
\begin{enumerate}
\item[{\rm (i)}] $\epsilon_i^\ast = \epsilon_i/||\epsilon_i||^2$;
\item[{\rm (ii)}] $\tilde{\epsilon}_i^\ast = \tilde{\epsilon}_i/||\tilde{\epsilon}_i||^2$.
\end{enumerate}
\end{lemma}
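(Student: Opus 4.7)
The plan is to verify directly that the vectors on the right-hand sides of (i) and (ii) satisfy the defining property of the dual basis, then appeal to the uniqueness of dual bases with respect to $\langle\ ,\ \rangle$.

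First I would handle part (i). Since $\langle\ ,\ \rangle$ is a nonzero $(X,\tilde{X})$-invariant form and $\{\epsilon_i\}_{i\in\X}$ is an $X$-standard basis, Lemma \ref{lem:nonzeronorms} guarantees $\|\epsilon_i\|^2 \neq 0$ for every $i \in \X$, so the expression $\epsilon_i/\|\epsilon_i\|^2$ is well defined. Since $\{\epsilon_i\}_{i\in\X}$ is an $X$-standard basis, each $\epsilon_i$ lies in the eigenspace $E_iV$, and Lemma \ref{lem:orthogonal} shows these eigenspaces are mutually orthogonal. Hence $\langle \epsilon_i, \epsilon_j\rangle = \delta_{ij}\|\epsilon_i\|^2$ for all $i,j\in\X$, which yields
\[
\Bigl\langle \epsilon_i,\ \epsilon_j/\|\epsilon_j\|^2 \Bigr\rangle = \delta_{ij} \qquad (i,j\in\X).
\]
By the uniqueness of the dual basis (as stated just above Notation \ref{not:duals}), this forces $\epsilon_i^\ast = \epsilon_i/\|\epsilon_i\|^2$ for all $i\in\X$.

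For part (ii), I would simply invoke Definition \ref{def:tilde} and apply (i) to the Cauchy pair $(\tilde{X},X)$: an $(X,\tilde{X})$-invariant form is also $(\tilde{X},X)$-invariant, and $\{\tilde{\epsilon}_i\}_{i\in\X}$ is $\tilde{X}$-standard, so the same argument applied with the roles of $X$ and $\tilde{X}$ interchanged gives $\tilde{\epsilon}_i^\ast = \tilde{\epsilon}_i/\|\tilde{\epsilon}_i\|^2$.

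There is essentially no obstacle here; the content of the lemma is a bookkeeping consequence of two earlier facts (orthogonality of eigenspaces, Lemma \ref{lem:orthogonal}, and nonvanishing of norms, Lemma \ref{lem:nonzeronorms}). The only point requiring care is to note explicitly that $\|\epsilon_i\|^2 \neq 0$ before dividing, which is why Lemma \ref{lem:nonzeronorms} must be cited.
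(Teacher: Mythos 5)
Your proof is correct and fills in exactly the "routine linear algebra" that the paper's one-line proof leaves implicit: orthogonality of the eigenspaces (Lemma \ref{lem:orthogonal}), nonvanishing of the norms (Lemma \ref{lem:nonzeronorms}), and uniqueness of the dual basis. No issues.
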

\begin{proof}
Follows from Notation \ref{not:duals} by routine linear algebra.
\end{proof}

\begin{lemma}\label{lem:duals2} For $i \in \X$, the following {\rm (i), (ii)} hold.
\begin{enumerate}
\item[{\rm (i)}] $||\epsilon_i^\ast||^2  = \rho^{-1} \alpha_i^{-1}$;
\item[{\rm (ii)}] $||\tilde{\epsilon}_i^\ast||^2 = -\rho^{-1}\gamma^{-2}\tilde{\alpha}_i^{-1}$.
\end{enumerate}
\end{lemma}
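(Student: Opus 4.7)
The plan is to reduce both identities to straightforward scalar computations using the bilinearity of $\langle\ ,\ \rangle$ together with the explicit descriptions of the dual bases and the norm formulas established earlier.

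First, for part (i), I would start from Lemma \ref{lem:duals}(i), which gives $\epsilon_i^\ast = \epsilon_i / \|\epsilon_i\|^2$. Using bilinearity of $\langle\ ,\ \rangle$, this yields
\[
    \|\epsilon_i^\ast\|^2 = \langle \epsilon_i / \|\epsilon_i\|^2,\ \epsilon_i / \|\epsilon_i\|^2 \rangle = \|\epsilon_i\|^2 / \|\epsilon_i\|^4 = 1/\|\epsilon_i\|^2.
\]
Then I would apply Lemma \ref{lem:norms}(i), which says $\|\epsilon_i\|^2 = \rho \alpha_i$, to conclude that $\|\epsilon_i^\ast\|^2 = \rho^{-1}\alpha_i^{-1}$. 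Note that $\alpha_i \ne 0$ by Corollary \ref{cor:nonzero} and $\rho \ne 0$ by Lemma \ref{lem:nonzeronorms}, so the reciprocals are well defined.

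For part (ii), the argument is parallel. From Lemma \ref{lem:duals}(ii), $\tilde{\epsilon}_i^\ast = \tilde{\epsilon}_i / \|\tilde{\epsilon}_i\|^2$, so by bilinearity $\|\tilde{\epsilon}_i^\ast\|^2 = 1/\|\tilde{\epsilon}_i\|^2$. By Lemma \ref{lem:norms}(ii), $\|\tilde{\epsilon}_i\|^2 = \tilde{\rho} \tilde{\alpha}_i$, and by Lemma \ref{lem:epsilons}, $\tilde{\rho} = -\rho\gamma^2$. Combining these gives $\|\tilde{\epsilon}_i\|^2 = -\rho\gamma^2\tilde{\alpha}_i$, hence $\|\tilde{\epsilon}_i^\ast\|^2 = -\rho^{-1}\gamma^{-2}\tilde{\alpha}_i^{-1}$, as required.

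There is no real obstacle here, since the heavy lifting has already been done: Lemma \ref{lem:duals} identifies the dual basis vectors up to scalar, Lemma \ref{lem:norms} gives the squared norms of the standard bases, and Lemma \ref{lem:epsilons} relates $\tilde{\rho}$ to $\rho$ and $\gamma$. The only minor care needed is to ensure all the quantities being inverted are nonzero, which follows from Corollary \ref{cor:nonzero} (for $\alpha_i, \tilde{\alpha}_i$), Notation \ref{not:standard} (for $\gamma$), and Lemma \ref{lem:nonzeronorms} together with Lemma \ref{lem:norms} (for $\rho$ and $\tilde{\rho}$).
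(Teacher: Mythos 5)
Your proposal is correct and follows essentially the same route as the paper, which proves the lemma by combining Lemma \ref{lem:duals} with Lemma \ref{lem:norms} (and, implicitly for part (ii), Lemma \ref{lem:epsilons} to convert $\tilde{\rho}$ into $-\rho\gamma^{2}$). You have merely written out the one-line computation $||\epsilon_i^\ast||^2 = 1/||\epsilon_i||^2$ and the nonvanishing checks that the paper leaves tacit.
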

\begin{proof}
Use Lemma \ref{lem:duals} and Lemma \ref{lem:norms}.
\end{proof}

\begin{lemma}\label{lem:dualinnerprods}
For $i, j \in \X$, the following {\rm (i)--(v)} hold.
\begin{enumerate}
\item[{\rm (i)}] $\langle \epsilon_i^\ast, \tilde{\epsilon}_j \rangle = -\gamma\tilde{\alpha}_j / (x_i - \tilde{x}_j)$;
\item[{\rm (ii)}] $\langle \epsilon_i, \tilde{\epsilon}_j^\ast \rangle = \gamma^{-1}\alpha_i / (x_i - \tilde{x}_j)$;
\item[{\rm (iii)}] $\langle \epsilon_i^\ast, \tilde{\epsilon}_j^\ast \rangle / ||\epsilon_i^\ast||^2 = \gamma^{-1}\alpha_i / (x_i - \tilde{x}_j)$;
\item[{\rm (iv)}] $\langle \epsilon_i^\ast, \tilde{\epsilon}_j^\ast \rangle / ||\tilde{\epsilon}_j^\ast||^2 = -\gamma\tilde{\alpha}_j / (x_i - \tilde{x}_j)$;
\item[{\rm (v)}] $\langle \epsilon_i^\ast, \tilde{\epsilon}_j^\ast\rangle^2 / ||\epsilon_i^\ast||^2 ||\tilde{\epsilon}_j^\ast||^2 = -\alpha_i \tilde{\alpha}_j / (x_i-\tilde{x}_j)^2$.
\end{enumerate}
\end{lemma}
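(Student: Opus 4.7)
The plan is to prove all five identities by reducing them to Lemma \ref{lem:innerprods} via the explicit description of the dual bases given in Lemma \ref{lem:duals}, namely $\epsilon_i^\ast = \epsilon_i/\|\epsilon_i\|^2$ and $\tilde{\epsilon}_j^\ast = \tilde{\epsilon}_j/\|\tilde{\epsilon}_j\|^2$. Bilinearity of $\langle\ ,\ \rangle$ turns each of (i)--(v) into a quotient already evaluated in Lemma \ref{lem:innerprods}, so no new identities are needed.

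For (i), I would write $\langle \epsilon_i^\ast, \tilde{\epsilon}_j \rangle = \langle \epsilon_i, \tilde{\epsilon}_j \rangle / \|\epsilon_i\|^2$, which by Lemma \ref{lem:innerprods}(i) equals $-\gamma\tilde{\alpha}_j/(x_i-\tilde{x}_j)$. Part (ii) is completely analogous using Lemma \ref{lem:innerprods}(ii) in the second slot. For (iii) and (iv), I would first observe from Lemma \ref{lem:duals} that $\|\epsilon_i^\ast\|^2 = 1/\|\epsilon_i\|^2$ and $\|\tilde{\epsilon}_j^\ast\|^2 = 1/\|\tilde{\epsilon}_j\|^2$, and that $\langle \epsilon_i^\ast, \tilde{\epsilon}_j^\ast\rangle = \langle \epsilon_i, \tilde{\epsilon}_j\rangle/(\|\epsilon_i\|^2\|\tilde{\epsilon}_j\|^2)$. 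Then the quotient $\langle \epsilon_i^\ast, \tilde{\epsilon}_j^\ast\rangle/\|\epsilon_i^\ast\|^2$ telescopes to $\langle \epsilon_i, \tilde{\epsilon}_j\rangle/\|\tilde{\epsilon}_j\|^2$, which is the content of Lemma \ref{lem:innerprods}(ii); symmetrically, $\langle \epsilon_i^\ast, \tilde{\epsilon}_j^\ast\rangle/\|\tilde{\epsilon}_j^\ast\|^2 = \langle \epsilon_i, \tilde{\epsilon}_j\rangle/\|\epsilon_i\|^2$, which is Lemma \ref{lem:innerprods}(i).

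For (v), the cleanest route is to note that the product of (iii) and (iv) on the left-hand side gives precisely $\langle \epsilon_i^\ast, \tilde{\epsilon}_j^\ast\rangle^2 / (\|\epsilon_i^\ast\|^2\|\tilde{\epsilon}_j^\ast\|^2)$, and on the right-hand side the product is $(\gamma^{-1}\alpha_i/(x_i-\tilde{x}_j)) \cdot (-\gamma\tilde{\alpha}_j/(x_i-\tilde{x}_j)) = -\alpha_i\tilde{\alpha}_j/(x_i-\tilde{x}_j)^2$. Alternatively, one can go directly: expand $\langle \epsilon_i^\ast, \tilde{\epsilon}_j^\ast\rangle^2/(\|\epsilon_i^\ast\|^2\|\tilde{\epsilon}_j^\ast\|^2)$ using Lemma \ref{lem:duals} and see it collapses to $\langle \epsilon_i, \tilde{\epsilon}_j\rangle^2/(\|\epsilon_i\|^2\|\tilde{\epsilon}_j\|^2)$, which is Lemma \ref{lem:innerprods}(iii).

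There is no real obstacle here; every step is a bilinearity manipulation followed by a quotation of Lemma \ref{lem:innerprods}. The only thing to be careful about is the bookkeeping of the factors $\|\epsilon_i\|^2$ and $\|\tilde{\epsilon}_j\|^2$ in the numerator versus the denominator, to make sure that (iii) produces the $\gamma^{-1}\alpha_i$ expression (not $-\gamma\tilde{\alpha}_j$) and (iv) produces the $-\gamma\tilde{\alpha}_j$ expression (not $\gamma^{-1}\alpha_i$). I would explicitly write down $\|\epsilon_i^\ast\|^2 = \|\epsilon_i\|^{-2}$ once at the start and then let the proof be essentially a three- or four-line chain of equalities.
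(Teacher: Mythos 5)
Your proposal is correct and follows essentially the same route as the paper: the paper's proof also derives all five parts from Lemma \ref{lem:duals} (the formulas $\epsilon_i^\ast = \epsilon_i/||\epsilon_i||^2$, $\tilde{\epsilon}_j^\ast = \tilde{\epsilon}_j/||\tilde{\epsilon}_j||^2$) together with Lemma \ref{lem:innerprods}, obtaining (v) from (iii) and (iv). Your bookkeeping of the norm factors is accurate throughout, so this is just a more explicit write-up of the paper's argument.
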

\begin{proof}
(i), (ii) Use Lemma \ref{lem:duals} and Lemma \ref{lem:innerprods}.

(iii) Follows from Lemma \ref{lem:duals} and (ii).

(iv) Follows from Lemma \ref{lem:duals} and (i).

(v) Follows from (iii), (iv).
\end{proof}

\section{More Transition Matrices}

We continue to study the Cauchy pair $(X, \tilde{X})$ on $V$ with eigenvalue data $(\{x_i\}_{i\in\X}, \{\tilde{x}_i\}_{i\in\X})$. From Notation \ref{not:standard} and Notation \ref{not:duals}, we now have four bases for $V$:
\begin{equation}\label{eq:bases}
\{\epsilon_i\}_{i \in \X}, \ \{\tilde{\epsilon}_i\}_{i \in \X}, \ \{\epsilon_i^\ast\}_{i \in \X}, \ \{\tilde{\epsilon}_i^\ast\}_{i \in \X}
\end{equation}

In Theorem \ref{thm:transmat}, we gave the transition matrix from $\{\epsilon_i\}_{i \in \X}$ to $\{\tilde{\epsilon}_i\}_{i \in \X}$ and the transition matrix from $\{\tilde{\epsilon}_i\}_{i \in \X}$ to $\{\epsilon_i\}_{i \in \X}$.  In this section, we give the transition matrices for the remaining pairs of bases from $(\ref{eq:bases})$.

\medskip
We recall a property of transition matrices.  Let $\{u_i\}_{i \in \X}$ and $\{v_i\}_{i \in \X}$ denote bases for $V$.  Let $B \in \text{Mat}_\X(\K)$ be the transition matrix from $\{u_i\}_{i \in \X}$ to $\{v_i\}_{i \in \X}$.  If $\{u_i\}_{i \in \X}$, $\{v_i\}_{i \in \X}$ are each mutually orthogonal with respect to a bilinear form $\langle \ , \ \rangle$, then the entries of $B$ are given by

\begin{equation}\label{eq:entries}
B_{ij} = \frac{\langle v_j, u_i\rangle}{||u_i||^2} \qquad (i, j \in \X).
\end{equation}

Recall the scalars $\{\alpha_i\}_{i \in \X}, \{\tilde{\alpha}_i\}_{i \in \X}$ from Definition \ref{def:alphas}, the scalar $\gamma$ from Notation \ref{not:standard}, and the scalar $\rho$ from Lemma \ref{lem:norms}.
\begin{lemma}\label{lem:dualtransmat}
With reference to {\rm(\ref{eq:bases})}, the following {\rm (i),(ii)} hold.
\begin{enumerate}
\item[{\rm (i)}] The transition matrix from $\{\epsilon_i\}_{i \in \X}$ to $\{\epsilon_i^\ast\}_{i \in \X}$ is diagonal with $(i,i)$-entry
\[
	\rho^{-1}\alpha_i^{-1} \qquad (i \in \X).
\]
\item[{\rm (ii)}] The transition matrix from $\{\tilde{\epsilon}_i\}_{i \in \X}$ to $\{\tilde{\epsilon}_i^\ast\}_{i \in \X}$ is diagonal with $(i,i)$-entry
\[
	-\rho^{-1}\gamma^{-2}\tilde{\alpha}_i^{-1} \qquad (i \in \X).
\]
\end{enumerate}
\end{lemma}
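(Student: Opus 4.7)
The plan is to derive both transition matrices directly from the explicit dual-basis formulas in Lemma \ref{lem:duals}, combined with the normalization formulas from Lemma \ref{lem:norms} and the relation $\tilde{\rho} = -\rho\gamma^2$ from Lemma \ref{lem:epsilons}. Since each dual basis vector is a scalar multiple of the corresponding original basis vector, the transition matrices are automatically diagonal, and the only work is to identify the scalar on the diagonal.

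For part (i), I would start from Lemma \ref{lem:duals}(i), which gives $\epsilon_i^\ast = \epsilon_i/\|\epsilon_i\|^2$. By Lemma \ref{lem:norms}(i), $\|\epsilon_i\|^2 = \rho\alpha_i$, so $\epsilon_i^\ast = \rho^{-1}\alpha_i^{-1}\,\epsilon_i$. Reading off the transition matrix from $\{\epsilon_i\}_{i\in\X}$ to $\{\epsilon_i^\ast\}_{i\in\X}$ via the defining relation $\epsilon_j^\ast = \sum_i B_{ij}\epsilon_i$, we get $B_{ij} = \rho^{-1}\alpha_i^{-1}\delta_{ij}$, which is exactly the claim. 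Note that this is well-defined because $\alpha_i \ne 0$ by Corollary \ref{cor:nonzero} and $\rho \ne 0$ by Lemma \ref{lem:nonzeronorms} together with Lemma \ref{lem:norms}.

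For part (ii), the same argument applies to the $\tilde{X}$-side. Lemma \ref{lem:duals}(ii) gives $\tilde{\epsilon}_i^\ast = \tilde{\epsilon}_i/\|\tilde{\epsilon}_i\|^2$, while Lemma \ref{lem:norms}(ii) gives $\|\tilde{\epsilon}_i\|^2 = \tilde{\rho}\tilde{\alpha}_i$. Substituting $\tilde{\rho} = -\rho\gamma^2$ from Lemma \ref{lem:epsilons} yields $\tilde{\epsilon}_i^\ast = -\rho^{-1}\gamma^{-2}\tilde{\alpha}_i^{-1}\,\tilde{\epsilon}_i$, from which the diagonal transition matrix is immediate.

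There is no real obstacle here: the result is a direct bookkeeping consequence of previously established identities, and the only care needed is to make sure the scaling factors $\rho$, $\gamma$, and $\tilde{\alpha}_i$ are combined with the correct signs and exponents when passing between the two sides via Lemma \ref{lem:epsilons}.
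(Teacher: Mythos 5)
Your proposal is correct and follows exactly the paper's route: the paper proves (i) by citing Lemma \ref{lem:duals} and Lemma \ref{lem:norms}, and (ii) by additionally citing Lemma \ref{lem:epsilons}, which is precisely the chain of substitutions you carry out. You have simply made explicit the bookkeeping (including the useful observation that $\rho \ne 0$) that the paper leaves to the reader.
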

\begin{proof}
(i) By Lemma \ref{lem:duals} and Lemma \ref{lem:norms}.

(ii) By Lemma \ref{lem:duals}, Lemma \ref{lem:norms}, and Lemma \ref{lem:epsilons}.
\end{proof}

\begin{lemma}\label{lem:transmatbig}
With reference to {\rm(\ref{eq:bases})}, the following {\rm (i)--(iii)} hold.
\begin{enumerate}
\item[{\rm (i)}] The transition matrix from $\{\epsilon_i\}_{i \in \X}$ to $\{\tilde{\epsilon}_i^\ast\}_{i \in \X}$ has $(i,j)$-entry
\[
    \rho^{-1}\gamma^{-1}\frac{1}{x_i - \tilde{x}_j} \qquad (i,j \in \X),
\]
and the transition matrix from the $\{\tilde{\epsilon}_i^\ast\}_{i \in \X}$ to $\{\epsilon_i\}_{i \in \X}$ has $(i,j)$-entry
\[
    \rho\gamma\frac{\tilde{\alpha}_i \alpha_j}{\tilde{x}_i - x_j} \qquad (i,j \in \X).
\]
\item[{\rm (ii)}] The transition matrix from $\{\epsilon_i^\ast\}_{i \in \X}$ to $\{\tilde{\epsilon}_i\}_{i \in \X}$ has $(i,j)$-entry
\[
    -\rho\gamma\frac{\alpha_i \tilde{\alpha}_j}{x_i - \tilde{x}_j} \qquad (i,j \in \X),
\]
and the transition matrix from $\{\tilde{\epsilon}_i\}_{i \in \X}$ to $\{\epsilon_i^\ast\}_{i \in \X}$ has $(i,j)$-entry
\[
    -\rho^{-1}\gamma^{-1}\frac{1}{\tilde{x}_i - x_j} \qquad (i,j \in \X).
\]
\item[{\rm (iii)}] The transition matrix from $\{\epsilon_i^\ast\}_{i \in \X}$ to $\{\tilde{\epsilon}_i^\ast\}_{i \in \X}$ has $(i,j)$-entry
\[
    \gamma^{-1}\frac{\alpha_i}{x_i - \tilde{x}_j} \qquad (i,j \in \X),
\]
and the transition matrix from $\{\tilde{\epsilon}_i^\ast\}_{i \in \X}$ to $\{\epsilon_i^\ast\}_{i \in \X}$ has $(i,j)$-entry
\[
    \gamma\frac{\tilde{\alpha}_i}{\tilde{x}_i - x_j} \qquad (i,j \in \X).
\]
\end{enumerate}
\end{lemma}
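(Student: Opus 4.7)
The plan is to apply the general formula (\ref{eq:entries}) to each of the six transition matrices in turn. First I would observe that all four bases in (\ref{eq:bases}) are mutually orthogonal with respect to $\langle\ ,\ \rangle$: the bases $\{\epsilon_i\}_{i\in\X}$ and $\{\tilde\epsilon_i\}_{i\in\X}$ are orthogonal by Lemma \ref{lem:orthogonal} (applied to $(X,\tilde X)$ and to $(\tilde X, X)$), and by Lemma \ref{lem:duals} the dual bases $\{\epsilon_i^\ast\}_{i\in\X}$ and $\{\tilde\epsilon_i^\ast\}_{i\in\X}$ are obtained from these by rescaling each basis vector by a nonzero scalar, hence are also mutually orthogonal. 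Consequently, for every ordered pair of bases $(\{u_i\}_{i\in\X},\{v_i\}_{i\in\X})$ drawn from (\ref{eq:bases}), the transition matrix $B$ from $\{u_i\}$ to $\{v_i\}$ has entries $B_{ij}=\langle v_j, u_i\rangle/||u_i||^2$.

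For each entry I would then substitute the inner product from the appropriate part of Lemma \ref{lem:innerprods} or Lemma \ref{lem:dualinnerprods}, and the norm from Lemma \ref{lem:norms}, Lemma \ref{lem:epsilons}, or Lemma \ref{lem:duals2}. For instance, for the first equation of (i), use Lemma \ref{lem:dualinnerprods}(ii) to get $\langle\tilde\epsilon_j^\ast,\epsilon_i\rangle=\gamma^{-1}\alpha_i/(x_i-\tilde x_j)$ (by symmetry of $\langle\ ,\ \rangle$, Lemma \ref{lem:symmetric}), then divide by $||\epsilon_i||^2=\rho\alpha_i$ from Lemma \ref{lem:norms}; the $\alpha_i$ cancels and the claimed entry $\rho^{-1}\gamma^{-1}/(x_i-\tilde x_j)$ emerges. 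The second equation of (i) goes the same way, using Lemma \ref{lem:dualinnerprods}(ii) at the swapped indices together with $||\tilde\epsilon_i^\ast||^2=-\rho^{-1}\gamma^{-2}\tilde\alpha_i^{-1}$ from Lemma \ref{lem:duals2}(ii), producing $\rho\gamma\tilde\alpha_i\alpha_j/(\tilde x_i-x_j)$.

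The remaining four formulas follow by exactly the same mechanism. For (ii) use Lemma \ref{lem:innerprods}(ii) (and symmetry) together with $||\epsilon_i^\ast||^2=\rho^{-1}\alpha_i^{-1}$ from Lemma \ref{lem:duals2}(i), and the analogous computation with $||\tilde\epsilon_j||^2=\tilde\rho\tilde\alpha_j$ and Lemma \ref{lem:epsilons} for the reverse direction. For (iii) use Lemma \ref{lem:dualinnerprods}(iii),(iv) and the norms in Lemma \ref{lem:duals2}. A labor-saving alternative is to derive each ``reverse'' transition matrix from the ``forward'' one by applying Definition \ref{def:tilde} to the Cauchy pair $(\tilde X, X)$ (which swaps $\alpha_i\leftrightarrow\tilde\alpha_i$, $x_i\leftrightarrow\tilde x_i$, $\gamma\leftrightarrow\gamma^{-1}$, and, via Lemma \ref{lem:epsilons}, $\rho\leftrightarrow -\rho\gamma^2$), thus cutting the six computations in half.

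The only genuine obstacle is bookkeeping: every entry involves factors of $\rho$, $\gamma$, $\alpha_i$, $\tilde\alpha_j$, and a sign from Lemma \ref{lem:epsilons} or Lemma \ref{lem:duals2}(ii), so I would set up a short table keeping track of $||u_i||^2$ for each of the four bases before writing out the six entries, and verify consistency by checking that each pair of reverse-direction matrices multiplies to the identity, using Lemma \ref{lem:sumiden} or Lemma \ref{lem:iden2}. No new ideas beyond (\ref{eq:entries}) and the previously computed inner products and norms are required.
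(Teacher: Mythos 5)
Your proposal is correct and follows essentially the same route as the paper: the paper's proof is exactly to compute each entry via (\ref{eq:entries}) using Lemma \ref{lem:dualinnerprods}, Lemma \ref{lem:norms}, Lemma \ref{lem:epsilons}, Lemma \ref{lem:duals2}, and the symmetry of the form from Lemma \ref{lem:symmetric}. Your added remarks --- verifying that all four bases are mutually orthogonal so that (\ref{eq:entries}) applies, and the consistency check that reverse pairs multiply to the identity --- are sound but not a different method.
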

\begin{proof}
For each transition matrix, compute (\ref{eq:entries}) using Lemma \ref{lem:dualinnerprods}, Lemma \ref{lem:norms}, Lemma \ref{lem:epsilons}, and Lemma \ref{lem:duals2}, as well as the fact that $\langle \ , \ \rangle$ is symmetric by Lemma \ref{lem:symmetric}.
\end{proof}

\section{Summary using matrices}
We continue to study the Cauchy pair $(X, \tilde{X})$ on $V$ with eigenvalue data $(\{x_i\}_{i\in\X}, \{\tilde{x}_i\}_{i\in\X})$. We now summarize our results on inner products and transition matrices in terms of matrices.

\begin{notation}\label{not:matrices} \rm
Let $A, \tilde{A} \in \text{Mat}_\X(\K)$ denote diagonal matrices with $A_{ii} = \alpha_i$ and $\tilde{A}_{ii} = \tilde{\alpha}_i$ for $i \in \X$. Let $C \in \text{Mat}_\X(\K)$ denote the Cauchy matrix with data $(\{x_i\}_{i \in \X}, \{\tilde{x}_i\}_{i \in \X})$ and let $\tilde{C} \in \text{Mat}_\X(\K)$ denote the Cauchy matrix with data $(\{\tilde{x}_i\}_{i \in \X},\{x_i\}_{i \in \X})$.
\end{notation}
By construction, $C$ is associated to the Cauchy pair $(X, \tilde{X})$ and $\tilde{C}$ is associated to the Cauchy pair $(\tilde{X}, X)$ by Definition \ref{def:assoc}. Note that $C^\top = -\tilde{C}$. Moreover, the inverses of $C, \tilde{C}$ can be expressed in the following way.
\begin{proposition} With reference to Notation \ref{not:matrices}, $C^{-1} = \tilde{A}CA$ and $\tilde{C}^{-1} = A \tilde {C} \tilde{A}$.\end{proposition}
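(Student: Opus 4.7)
The plan is to derive both matrix identities in one stroke from the relation $T\tilde{T} = I = \tilde{T}T$, where $T$ and $\tilde{T}$ are the transition matrices from Notation \ref{not:standard}.

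First I would repackage Theorem \ref{thm:transmat} into compact matrix form. Since $\tilde{A}$ is diagonal with $\tilde{A}_{jj} = \tilde{\alpha}_j$, the formula $T_{ij} = -\gamma\tilde{\alpha}_j/(x_i-\tilde{x}_j) = -\gamma\,C_{ij}\,\tilde{A}_{jj}$ is equivalent to the matrix identity $T = -\gamma\,C\tilde{A}$; symmetrically, $\tilde{T} = -\gamma^{-1}\tilde{C}A$. Note that $A$ and $\tilde{A}$ are both invertible by Corollary \ref{cor:nonzero}, so these identities can be manipulated freely.

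Next I would substitute these matrix expressions into $T\tilde{T} = I$ and $\tilde{T}T = I$. In each product, the scalar factors $-\gamma$ and $-\gamma^{-1}$ cancel, leaving two matrix identities, each of which expresses the product of one Cauchy matrix and the other (interleaved with $A$ and $\tilde{A}$) as the identity. Rearranging then isolates $C^{-1}$ and $\tilde{C}^{-1}$, giving the two claimed expressions.

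I do not anticipate any real obstacle: once Theorem \ref{thm:transmat} is rewritten in matrix form, the entire argument reduces to a single line of matrix algebra. An entry-level alternative would be to verify $CM = I$ directly for the proposed $M = C^{-1}$, using the explicit formula for $\alpha_i, \tilde{\alpha}_i$ from Corollary \ref{cor:alphaxi} together with the partial-fraction identities in Lemma \ref{lem:sumiden} and Lemma \ref{lem:iden2}; this works but is considerably more tedious and obscures the conceptual point that these inverses are just the transition matrices between standard bases already computed.
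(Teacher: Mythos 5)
Your reduction of the problem to $T\tilde{T}=I=\tilde{T}T$ is sound, and it is essentially the paper's own idea: the paper also obtains the result by observing that two mutually inverse transition matrices are scalar multiples of the matrices in question. The only real difference is the choice of basis pair --- the paper uses the transition matrices between $\{\epsilon_i\}_{i\in\X}$ and $\{\tilde{\epsilon}_i^\ast\}_{i\in\X}$ from Lemma \ref{lem:transmatbig}(i), while you use the pair $T,\tilde{T}$ between the two standard bases from Theorem \ref{thm:transmat}. Your choice is marginally more economical, since it bypasses the bilinear form and the dual bases entirely.

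However, you assert without checking that the rearrangement ``gives the two claimed expressions,'' and it does not. Your (correct) identities $T=-\gamma C\tilde{A}$ and $\tilde{T}=-\gamma^{-1}\tilde{C}A$ substituted into $T\tilde{T}=I$ and $\tilde{T}T=I$ yield $C\tilde{A}\tilde{C}A=I$ and $\tilde{C}AC\tilde{A}=I$, i.e.\ $C^{-1}=\tilde{A}\tilde{C}A$ and $\tilde{C}^{-1}=AC\tilde{A}$, whereas the statement reads $C^{-1}=\tilde{A}CA$ and $\tilde{C}^{-1}=A\tilde{C}\tilde{A}$: the middle factors disagree. What you derive is in fact the correct version. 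For instance, when $|\X|=1$ one has $C=(x_1-\tilde{x}_1)^{-1}$, $A=\alpha_1=x_1-\tilde{x}_1$, $\tilde{A}=\tilde{\alpha}_1=\tilde{x}_1-x_1$, so $\tilde{A}CA=\tilde{x}_1-x_1=-C^{-1}$ while $\tilde{A}\tilde{C}A=x_1-\tilde{x}_1=C^{-1}$. The discrepancy is a typo in the Proposition, which the paper's own proof inherits: it quotes the second transition matrix of Lemma \ref{lem:transmatbig}(i) as $\rho\gamma\tilde{A}CA$, but that lemma's entry $\rho\gamma\tilde{\alpha}_i\alpha_j/(\tilde{x}_i-x_j)$ is $\rho\gamma(\tilde{A}\tilde{C}A)_{ij}$. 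So your argument is fine, but you should carry out the one line of algebra, state what it actually proves, and flag the mismatch with the printed statement rather than declaring agreement with it.
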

\begin{proof}
By Lemma \ref{lem:transmatbig}(i), $\rho^{-1}\gamma^{-1} C$ is the transition matrix from $\{\epsilon_{i}\}_{i \in \X}$ to $\{\tilde{\epsilon}_i^\ast\}_{i \in \X}$ and $\rho \gamma \tilde{A}CA$ is the transition matrix from $\{\tilde{\epsilon}_i^\ast\}_{i \in \X}$ to $\{\epsilon_i\}_{i \in \X}$.  Thus $\rho^{-1}\gamma^{-1} C$ and $\rho \gamma \tilde{A}CA$ are inverses, so  $C$ and $\tilde{A}CA$ are inverses. Therefore $C^{-1} = \tilde{A}CA$.

\medspace
Using Lemma \ref{lem:transmatbig}(ii), a similar argument shows the second assertion.
\end{proof}

\begin{lemma}
In the diagram below, we display inner product matrices between the four bases in {\rm(\ref{eq:bases})} using Notation \ref{not:matrices}:

\begin{equation*}
\xymatrix@C=16pc@R=12pc{
\{\epsilon_i\}_{i \in \X}
        \ar@(ur,ul)[]_{\rho A}
        \ar@<.5ex>[r]^{-\rho\gamma A C \tilde{A}}
        \ar@<.5ex>[dr]^(.75){\gamma^{-1}AC}
        \ar@{-}[d]_{I}
    & \{\tilde{\epsilon}_i\}_{i \in \X}
        \ar@(ur,ul)[]_{-\rho \gamma^2  \tilde{A}}
        \ar@<.5ex>[l]^{\rho\gamma\tilde{A}\tilde{C} A}
        \ar@<.5ex>[dl]^(.75){\gamma \tilde{A}\tilde{C}}|\hole
        \ar@{-}[d]^{I}\\
    \{\epsilon_i^\ast\}_{i \in \X}
        \ar@(dr,dl)[]^{\rho^{-1}A^{-1}}
        \ar@<.5ex>[ur]^(.75){-\gamma C \tilde{A}}|\hole
        \ar@<.5ex>[r]^{\rho^{-1}\gamma^{-1} C}
    & \{\tilde{\epsilon}_i^\ast\}_{i \in \X}
        \ar@(dr,dl)[]^{-{\rho}^{-1}\gamma^{-2}\tilde{A}^{-1}}
        \ar@<.5ex>[l]^{-\rho^{-1}\gamma^{-1}\tilde{C}}
        \ar@<.5ex>[ul]^(.75){-\gamma^{-1} \tilde{C} A}
}
\end{equation*}
where $\{u_i\}_{i \in \X} \displaystyle\mathrel{\mathop{\longrightarrow}^{\mathrm{B}}}\{v_i\}_{i \in \X}$ means $B_{ij} = \langle u_i, v_j\rangle$ for all $i, j \in \X$.  The direction arrow is omitted when $B$ is symmetric.
\end{lemma}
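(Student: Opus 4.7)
The claim is to verify, for each of the four self-loops, the two (identity) duality edges, and the eight directed cross-arrows in the diagram, that the stated matrix $B$ has $(i,j)$-entry equal to $\langle u_i, v_j\rangle$. Every one of these entries is already implicit in Sections 10 and 11, so the plan is a disciplined three-batch bookkeeping check rather than any new computation: for each edge $\{u_i\}_{i \in \X} \to \{v_i\}_{i \in \X}$ I write $\langle u_i, v_j\rangle$ via the appropriate earlier lemma and recognize the result as an entry of the stated matrix under Notation \ref{not:matrices}.

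\emph{Self-loops.} The bases $\{\epsilon_i\}_{i \in \X}$ and $\{\tilde{\epsilon}_i\}_{i \in \X}$ consist of eigenvectors of $X$ and $\tilde{X}$, respectively, so by Lemma \ref{lem:orthogonal} each is mutually orthogonal. The diagonal values $\rho\alpha_i$ and $-\rho\gamma^2\tilde{\alpha}_i$ then follow from Lemma \ref{lem:norms} combined with the identity $\tilde{\rho} = -\rho\gamma^2$ of Lemma \ref{lem:epsilons}. For the two starred bases, Lemma \ref{lem:duals} exhibits each as a diagonal rescaling of its unstarred partner (preserving orthogonality), while Lemma \ref{lem:duals2} supplies the diagonal values $\rho^{-1}\alpha_i^{-1}$ and $-\rho^{-1}\gamma^{-2}\tilde{\alpha}_i^{-1}$. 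The two vertical $I$-labels are simply the defining duality relation from Notation \ref{not:duals}.

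\emph{Cross-arrows.} For the $\{\epsilon_i\}_{i\in\X} \to \{\tilde{\epsilon}_j\}_{j\in\X}$ edge I multiply Lemma \ref{lem:innerprods}(i) by $\|\epsilon_i\|^2 = \rho\alpha_i$ to read off $\langle \epsilon_i, \tilde{\epsilon}_j\rangle = -\rho\gamma\alpha_i\tilde{\alpha}_j/(x_i - \tilde{x}_j)$, which is the $(i,j)$-entry of $-\rho\gamma AC\tilde{A}$; the reverse edge then follows from symmetry of the form (Lemma \ref{lem:symmetric}) together with $\tilde{C}_{ij} = (\tilde{x}_i - x_j)^{-1}$. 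The four diagonal cross-arrows (between one starred and one unstarred basis) come directly from Lemma \ref{lem:dualinnerprods}(i),(ii) and their symmetric partners. For the two horizontal arrows between $\{\epsilon_i^\ast\}_{i \in \X}$ and $\{\tilde{\epsilon}_j^\ast\}_{j \in \X}$, I would combine Lemma \ref{lem:dualinnerprods}(iii),(iv) with the norm values from Lemma \ref{lem:duals2} to absorb the $\|\epsilon_i^\ast\|^2$ and $\|\tilde{\epsilon}_j^\ast\|^2$ factors, leaving exactly $\pm \rho^{-1}\gamma^{-1}$ times $C$ or $\tilde{C}$.

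\emph{Main obstacle.} There is no substantive mathematical difficulty, so the main challenge is sign-and-exponent hygiene. Two sources of alternation must be reconciled: first, the identity $\tilde{\rho} = -\rho\gamma^2$ of Lemma \ref{lem:epsilons}, which is responsible for all minus signs attached to $\tilde{\epsilon}$ and $\tilde{\epsilon}^\ast$ self-loops and dual-norm edges; second, the relation $C^\top = -\tilde{C}$ noted after Notation \ref{not:matrices}, which, combined with the swap $A \leftrightarrow \tilde{A}$, governs the alternation of $C$ with $\tilde{C}$ and of $\gamma$ with $\gamma^{-1}$ across oppositely oriented arrows. Once these conventions are pinned down, the fourteen entries fall out mechanically.
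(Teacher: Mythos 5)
Your proposal is correct and follows essentially the same route as the paper, which simply verifies each entry of the diagram using Notation \ref{not:matrices} together with Lemma \ref{lem:norms}, Lemma \ref{lem:epsilons}, Lemma \ref{lem:innerprods}, Lemma \ref{lem:duals2}, and Lemma \ref{lem:dualinnerprods}. Your entry-by-entry bookkeeping (and the two sign sources you isolate, $\tilde{\rho}=-\rho\gamma^{2}$ and $C^{\top}=-\tilde{C}$) is just a more explicit write-up of that same verification.
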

\begin{proof}
Use Notation \ref{not:matrices} and verify using Lemma \ref{lem:norms}, Lemma \ref{lem:epsilons}, Lemma \ref{lem:innerprods}, Lemma \ref{lem:duals2}, and Lemma \ref{lem:dualinnerprods}.
\end{proof}

\begin{lemma} In the diagram below, we display the transition matrices between the four bases in {\rm(\ref{eq:bases})} using Notation \ref{not:matrices}:
\begin{equation*}
\xymatrix@C=16pc@R=12pc{
\{\epsilon_i\}_{i \in \X}
        \ar@<.5ex>[r]^{-\gamma C\tilde{A}}
        \ar@<.5ex>[dr]^(.75){\rho^{-1}\gamma^{-1}C}
        \ar@<.5ex>[d]^{\rho^{-1}A^{-1}}
    & \{\tilde{\epsilon}_i\}_{i \in \X}
        \ar@<.5ex>[l]^{-\gamma^{-1}\tilde{C}A}
        \ar@<.5ex>[dl]^(.75){-\rho^{-1}\gamma^{-1}\tilde{C}}|\hole
        \ar@<.5ex>[d]^{-\rho^{-1}\gamma^{-2}\tilde{A}^{-1}}\\
    \{\epsilon_i^\ast\}_{i \in \X}
        \ar@<.5ex>[u]^{\rho A}
        \ar@<.5ex>[ur]^(.75){-\rho\gamma A C\tilde{A}}|\hole
        \ar@<.5ex>[r]^{\gamma^{-1}AC}
    & \{\tilde{\epsilon}_i^\ast\}_{i \in \X}
        \ar@<.5ex>[l]^{\gamma\tilde{A}\tilde{C}}
        \ar@<.5ex>[ul]^(.75){\rho\gamma\tilde{A}\tilde{C}A}
        \ar@<.5ex>[u]^{-\rho\gamma^{2}\tilde{A}}
}
\end{equation*}
where $\{u_i\}_{i \in \X} \displaystyle\mathrel{\mathop{\longrightarrow}^{\mathrm{B}}}\{v_i\}_{i \in \X}$ means $v_j = \sum_{i \in \X} B_{ij}u_i$ for all $j \in \X$.
\end{lemma}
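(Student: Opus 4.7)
The plan is to verify each of the twelve arrows in the diagram by converting the labeled matrix product into its entrywise form via Notation \ref{not:matrices} and matching it against a transition-matrix formula already derived in Sections 8, 10, or 11. Recall that $A$ and $\tilde{A}$ are diagonal with $A_{ii}=\alpha_i$, $\tilde{A}_{ii}=\tilde{\alpha}_i$, while $C_{ij}=(x_i-\tilde{x}_j)^{-1}$ and $\tilde{C}_{ij}=(\tilde{x}_i-x_j)^{-1}$. Hence, for example, $(C\tilde{A})_{ij}=\tilde{\alpha}_j/(x_i-\tilde{x}_j)$, so the entry of $-\gamma C\tilde{A}$ agrees with the formula $T_{ij}=-\gamma\tilde{\alpha}_j/(x_i-\tilde{x}_j)$ from Theorem \ref{thm:transmat}(i); and since $A_{jj}=\alpha_j$, the entry of $-\gamma^{-1}\tilde{C}A$ agrees with $\tilde{T}_{ij}=-\gamma^{-1}\alpha_j/(\tilde{x}_i-x_j)$ from Theorem \ref{thm:transmat}(ii). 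The remaining arrows are to be checked by the same mechanism.

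I would partition the twelve arrows by their source lemma. The two horizontal arrows in the top row come from Theorem \ref{thm:transmat}. The two ``vertical'' arrows $\{\epsilon_i\}\to\{\epsilon_i^\ast\}$ and $\{\tilde{\epsilon}_i\}\to\{\tilde{\epsilon}_i^\ast\}$ (and their reverses, which are the inverses) come from Lemma \ref{lem:dualtransmat}; here the labels $\rho^{-1}A^{-1}$ and $-\rho^{-1}\gamma^{-2}\tilde{A}^{-1}$ are already diagonal matrices, so matching is immediate, and the reverse vertical arrows $\rho A$ and $-\rho\gamma^{2}\tilde{A}$ are simply the inverse diagonal matrices (using $\alpha_i,\tilde{\alpha}_i\ne 0$ from Corollary \ref{cor:nonzero}). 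The remaining six arrows — the two diagonals in each direction and the two horizontal arrows in the bottom row — correspond precisely to the three parts (i), (ii), (iii) of Lemma \ref{lem:transmatbig}; for each, multiplying out a product like $\gamma^{-1}AC$ or $-\rho\gamma\tilde{A}\tilde{C}A$ recovers the stated $(i,j)$-entry verbatim.

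As a consistency check, whenever an arrow and its reverse both appear in the diagram, one may verify directly that their labels are mutually inverse matrices. For instance, the composite $(-\gamma C\tilde{A})(-\gamma^{-1}\tilde{C}A)=C\tilde{A}\tilde{C}A$ should equal $I$, which follows from the identity $C^{-1}=\tilde{A}CA$ established in the preceding proposition together with $\tilde{C}=-C^{\top}$ and the observation that $(\tilde{A}CA)^{\top}=A C^{\top}\tilde{A}=-A\tilde{C}\tilde{A}$, yielding $\tilde{C}^{-1}=A\tilde{C}\tilde{A}$ as well. Alternatively, one may obtain each reverse arrow by invoking Definition \ref{def:tilde} to pass from $(X,\tilde{X})$ to $(\tilde{X},X)$, which interchanges $\alpha_i\leftrightarrow\tilde{\alpha}_i$, $C\leftrightarrow\tilde{C}$, $A\leftrightarrow\tilde{A}$, and (using $\tilde{\gamma}=\gamma^{-1}$ from the comment after Notation \ref{not:standard}) $\gamma\leftrightarrow\gamma^{-1}$.

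No substantive obstacle arises: the entire argument is a bookkeeping exercise. The only real risk is clerical — tracking the signs and the distinction among $\gamma$, $\gamma^{-1}$, and $\gamma^{-2}$ correctly across all twelve arrows, and being careful that the roles of $C$ versus $\tilde{C}$ and of $A$ versus $\tilde{A}$ are never conflated when applying the dual convention of Definition \ref{def:tilde}.
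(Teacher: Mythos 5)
Your proposal is correct and follows essentially the same route as the paper, whose proof is precisely to verify each arrow entrywise against Theorem \ref{thm:transmat}, Lemma \ref{lem:dualtransmat}, and Lemma \ref{lem:transmatbig} using Notation \ref{not:matrices}. The extra consistency checks (inverse pairs and the $\tilde{\phantom{x}}$-symmetry) are sound but not needed beyond the direct matching.
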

\begin{proof}
Use Notation \ref{not:matrices} and verify using Theorem \ref{thm:transmat}, Lemma \ref{lem:dualtransmat}, Lemma \ref{lem:transmatbig}.
\end{proof}

\section{Cauchy matrices as transition matrices}\label{sec:mainresults}

In this section, we complete our description of Cauchy matrices associated to Cauchy pairs.

\begin{theorem}\label{thm:cmfromcp}
Let $(X, \tilde{X})$ be a Cauchy pair on $V$ and let $C \in {\rm Mat}_\X(\K)$ be a Cauchy matrix associated to $(X, \tilde{X})$.  Then there exists an $X$-eigenbasis $\{\epsilon_i\}_{i \in \X}$ for $V$ and an $\tilde{X}$-eigenbasis $\{\tilde{\epsilon}^\ast_i\}_{i \in \X}$ for $V$ such that $C$ is the transition matrix from $\{\epsilon_i\}_{i \in \X}$ to $\{\tilde{\epsilon}^\ast_i\}_{i \in \X}$.
\end{theorem}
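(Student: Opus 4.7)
The plan is to recognize that nearly all the work has already been done in Lemma \ref{lem:transmatbig}(i), which computes the transition matrix from an $X$-standard basis $\{\epsilon_i\}_{i\in\X}$ to the dual $\tilde{X}$-standard basis $\{\tilde{\epsilon}_i^\ast\}_{i\in\X}$ as the matrix with $(i,j)$-entry $\rho^{-1}\gamma^{-1}(x_i-\tilde{x}_j)^{-1}$. Since $C$ has $(i,j)$-entry $(x_i-\tilde{x}_j)^{-1}$ for an appropriate choice of eigenvalue data, it suffices to exhibit a choice of $X$-standard basis, invariant form, and $\tilde{X}$-standard basis for which $\rho\gamma = 1$. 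The freedom to make this choice comes from Theorem \ref{thm:formexistance} (the invariant form is determined only up to a nonzero scalar) together with Lemma \ref{lem:freeindex} (the index $\gamma$ may be prescribed arbitrarily once an $X$-standard basis is fixed).

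Concretely, I would proceed as follows. First, since $C$ is associated to $(X,\tilde{X})$, Definition \ref{def:assoc} provides eigenvalue data $(\{x_i\}_{i\in\X},\{\tilde{x}_i\}_{i\in\X})$ for $(X,\tilde{X})$ such that $C_{ij}=(x_i-\tilde{x}_j)^{-1}$ for all $i,j\in\X$. This fixes the indexing of the eigenspaces $E_iV$ and $\tilde{E}_iV$. Next, pick any $X$-standard basis $\{\epsilon_i\}_{i\in\X}$ (with $\epsilon_i\in E_iV$) and any nonzero $(X,\tilde{X})$-invariant form $\langle\,,\,\rangle$ on $V$, which exists by Theorem \ref{thm:formexistance}. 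Let $\rho$ be the common value $\|\epsilon_i\|^2/\alpha_i$ from Lemma \ref{lem:norms}(i); note $\rho\ne 0$ since Lemma \ref{lem:nonzeronorms} and Corollary \ref{cor:nonzero} ensure both numerator and denominator are nonzero. Now apply Lemma \ref{lem:freeindex} with the prescribed index $\gamma=\rho^{-1}$ to obtain an $\tilde{X}$-standard basis $\{\tilde{\epsilon}_i\}_{i\in\X}$ realizing this index relative to $\{\epsilon_i\}_{i\in\X}$. Define $\{\tilde{\epsilon}_i^\ast\}_{i\in\X}$ as the basis dual to $\{\tilde{\epsilon}_i\}_{i\in\X}$ with respect to $\langle\,,\,\rangle$, as in Notation \ref{not:duals}.

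It remains to verify the two claims. By Lemma \ref{lem:duals}(ii), $\tilde{\epsilon}_i^\ast=\tilde{\epsilon}_i/\|\tilde{\epsilon}_i\|^2$ is a nonzero scalar multiple of $\tilde{\epsilon}_i$, hence lies in $\tilde{E}_iV$; so $\{\tilde{\epsilon}_i^\ast\}_{i\in\X}$ is an $\tilde{X}$-eigenbasis. By Lemma \ref{lem:transmatbig}(i), the transition matrix $B$ from $\{\epsilon_i\}_{i\in\X}$ to $\{\tilde{\epsilon}_i^\ast\}_{i\in\X}$ has $(i,j)$-entry
\[
B_{ij} \;=\; \rho^{-1}\gamma^{-1}\,\frac{1}{x_i-\tilde{x}_j} \;=\; \frac{1}{x_i-\tilde{x}_j} \;=\; C_{ij},
\]
by the choice $\gamma=\rho^{-1}$, so $B=C$.

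There is no serious obstacle here; the argument is essentially a normalization exercise, and the only thing that requires care is maintaining the correct indexing convention so that the fixed eigenvalue data for $(X,\tilde{X})$ matches the data for $C$ (which is why we must select the eigenvalue data before constructing the bases). Everything else is a direct appeal to the earlier lemmas.
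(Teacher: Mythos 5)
Your proposal is correct and follows essentially the same route as the paper: both reduce to making the prefactor $\rho^{-1}\gamma^{-1}$ in Lemma \ref{lem:transmatbig}(i) equal to $1$, the only (cosmetic) difference being that the paper sets $\gamma=1$ and rescales the form so that $\rho=1$, while you keep an arbitrary form and choose $\gamma=\rho^{-1}$ via Lemma \ref{lem:freeindex}.
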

\begin{proof}
By Definition \ref{def:assoc}, there exists eigenvalue data $(\{x_i\}_{i \in \X}, \{\tilde{x}_i\}_{i \in \X})$ for $(X, \tilde{X})$ such that $(\{x_i\}_{i \in \X}, \{\tilde{x}_i\}_{i \in \X})$ is data for $C$.  Let $\{\epsilon_i\}_{i \in \X}$ be an $X$-standard basis for $V$.  By Lemma \ref{lem:freeindex}, there exists an $\tilde{X}$-standard basis $\{\tilde{\epsilon}_i\}_{i \in \X}$ such that the index of $\{\epsilon_i\}_{i \in \X}, \{\tilde{\epsilon}_i\}_{i \in \X}$ is 1.  By Theorem \ref{thm:formexistance} and Lemma \ref{lem:norms}, there exists an $(X, \tilde{X})$-invariant form $\langle \ , \ \rangle$ on $V$ such that $||\epsilon_i||^2/\alpha_i = 1$ for all $i \in \X$.  Let $\{\tilde{\epsilon}^\ast_i\}_{i \in \X}$ denote the basis dual to $\{\tilde{\epsilon}_i\}_{i \in \X}$ with respect to $\langle \ , \ \rangle$.  Observe that $\{\tilde{\epsilon}^\ast_i\}_{i \in \X}$ is an $\tilde{X}$-eigenbasis for $V$ by Lemma \ref{lem:duals}. By Lemma \ref{lem:transmatbig}(i), the transition matrix from $\{\epsilon_i\}_{i \in \X}$ to $\{\tilde{\epsilon}^\ast_i\}_{i \in \X}$ has $(i,j)$-entry $(x_i - \tilde{x}_j)^{-1}$ for $i,j \in \X$.  By Definition \ref{def:data}, the transition matrix is equal to $C$.
\end{proof}

\begin{lemma}\label{lem:assoctoequiv}
Let $(X, \tilde{X})$ be a Cauchy pair on $V$ and let $C \in {\rm Mat}_\X(\K)$ be a Cauchy matrix associated to $(X, \tilde{X})$. Let $(Y, \tilde{Y})$ denote a Cauchy pair on $V$.  Suppose that $C$ is associated to $(Y, \tilde{Y})$.  Then $(X, \tilde{X})$ and $(Y, \tilde{Y})$ are equivalent.
\end{lemma}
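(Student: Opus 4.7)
The plan is to use Lemma \ref{lem:equivdata} to extract the shift parameter $\zeta$, and then use Theorem \ref{thm:cmfromcp} to realize $C$ as a transition matrix for both Cauchy pairs simultaneously, producing an isomorphism by matching eigenbases.

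First, by Definition \ref{def:assoc}, there exist eigenvalue data $(\{x_i\}_{i \in \X}, \{\tilde{x}_i\}_{i \in \X})$ for $(X, \tilde{X})$ and $(\{y_i\}_{i \in \X}, \{\tilde{y}_i\}_{i \in \X})$ for $(Y, \tilde{Y})$ such that both are data for $C$. By Lemma \ref{lem:equivdata}, there exists a common $\zeta \in \K$ with $x_i - y_i = \zeta$ and $\tilde{x}_i - \tilde{y}_i = \zeta$ for all $i \in \X$. By Lemma \ref{lem:affinetrans}, the pair $(Y + \zeta I, \tilde{Y} + \zeta I)$ is a Cauchy pair on $V$ with eigenvalue data $(\{x_i\}_{i \in \X}, \{\tilde{x}_i\}_{i \in \X})$, which is in particular data for $C$; hence $C$ is associated to this shifted pair.

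Next, apply Theorem \ref{thm:cmfromcp} to both $(X, \tilde{X})$ and $(Y + \zeta I, \tilde{Y} + \zeta I)$. This yields an $X$-eigenbasis $\{\epsilon_i\}_{i \in \X}$ with $X \epsilon_i = x_i \epsilon_i$ and an $\tilde{X}$-eigenbasis $\{\tilde{\epsilon}_i^\ast\}_{i \in \X}$ with $\tilde{X}\tilde{\epsilon}_i^\ast = \tilde{x}_i \tilde{\epsilon}_i^\ast$ for which $C$ is the transition matrix from $\{\epsilon_i\}_{i \in \X}$ to $\{\tilde{\epsilon}_i^\ast\}_{i \in \X}$. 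Analogously, it yields a $(Y+\zeta I)$-eigenbasis $\{\eta_i\}_{i \in \X}$ with $(Y+\zeta I)\eta_i = x_i \eta_i$ and a $(\tilde{Y}+\zeta I)$-eigenbasis $\{\tilde{\eta}_i^\ast\}_{i \in \X}$ with $(\tilde{Y}+\zeta I)\tilde{\eta}_i^\ast = \tilde{x}_i \tilde{\eta}_i^\ast$ for which $C$ is the transition matrix from $\{\eta_i\}_{i \in \X}$ to $\{\tilde{\eta}_i^\ast\}_{i \in \X}$.

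Finally, define the $\K$-linear map $\phi \in \text{End}(V)$ by $\phi(\epsilon_i) = \eta_i$ for all $i \in \X$; this is a vector space isomorphism. Because $C$ is the common transition matrix, $\tilde{\epsilon}_j^\ast = \sum_{i \in \X} C_{ij}\epsilon_i$ and $\tilde{\eta}_j^\ast = \sum_{i \in \X} C_{ij}\eta_i$, so $\phi(\tilde{\epsilon}_j^\ast) = \tilde{\eta}_j^\ast$. The identity $\phi X = (Y+\zeta I)\phi$ is immediate on the $X$-eigenbasis (both sides send $\epsilon_i$ to $x_i \eta_i$), and $\phi\tilde{X} = (\tilde{Y}+\zeta I)\phi$ is immediate on the $\tilde{X}$-eigenbasis (both sides send $\tilde{\epsilon}_j^\ast$ to $\tilde{x}_j \tilde{\eta}_j^\ast$). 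Thus $\phi$ is an isomorphism of Cauchy pairs from $(X, \tilde{X})$ to $(Y+\zeta I, \tilde{Y}+\zeta I)$, and by Definition \ref{def:equivalence}, $(X, \tilde{X})$ and $(Y, \tilde{Y})$ are equivalent.

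The only subtle point — and the reason this really needs Theorem \ref{thm:cmfromcp} rather than just matching eigenbases at random — is that the single matrix $C$ must simultaneously encode the change of basis for both $X$ and $\tilde{X}$; using the same Cauchy matrix on both sides is precisely what forces the would-be isomorphism $\phi$ defined via the $X$-eigenbasis to also respect $\tilde{X}$.
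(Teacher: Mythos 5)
Your proposal is correct and follows essentially the same route as the paper: invoke Theorem \ref{thm:cmfromcp} for both pairs to realize $C$ as a common transition matrix, match the eigenbases to define $\phi$, and extract $\zeta$ via Lemma \ref{lem:equivdata}. The only (cosmetic) difference is that you shift $(Y,\tilde{Y})$ to $(Y+\zeta I,\tilde{Y}+\zeta I)$ before applying Theorem \ref{thm:cmfromcp}, whereas the paper applies it to $(Y,\tilde{Y})$ directly and absorbs the shift when verifying the intertwining relations.
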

\begin{proof}
By Theorem \ref{thm:cmfromcp}, there exists an $X$-eigenbasis $\{\epsilon_i\}_{i \in \X}$ for $V$ and an $\tilde{X}$-eigenbasis $\{\tilde{\epsilon}^\ast_i\}_{i \in \X}$ for $V$ such that $C$ is the transition matrix from $\{\epsilon_i\}_{i \in \X}$ to $\{\tilde{\epsilon}^\ast_i\}_{i \in \X}$.  Also by Theorem \ref{thm:cmfromcp}, there exists a $Y$-eigenbasis $\{\omega_i\}_{i \in \X}$ for $V$ and a $\tilde{Y}$-eigenbasis $\{\tilde{\omega}^\ast_i\}_{i \in \X}$ for $V$ such that $C$ is the transition matrix from $\{\omega_i\}_{i \in \X}$ to $\{\tilde{\omega}^\ast_i\}_{i \in \X}$.
Let $\phi \in \text{End}(V)$ be the vector space isomorphism that sends $\epsilon_i$ to $\omega_i$ for all $i \in \X$.  Since the transition matrix from $\{\epsilon_i\}_{i \in \X}$ to $\{\tilde{\epsilon}^\ast_i\}_{i \in \X}$ is equal to the transition matrix from $\{\omega_i\}_{i \in \X}$ to $\{\tilde{\omega}^\ast_i\}_{i \in \X}$, $\phi$ sends $\tilde{\epsilon}^\ast_i$ to $\tilde{\omega}^\ast_i$ for all $i \in \X$.

\medspace
By Definition \ref{def:assoc}, there exists eigenvalue data $(\{x_i\}_{i \in \X}, \{\tilde{x}_i\}_{i \in \X})$ for $(X, \tilde{X})$ and eigenvalue data $(\{y_i\}_{i \in \X}, \{\tilde{y}_i\}_{i \in \X})$ for $(Y, \tilde{Y})$ such that $(\{x_i\}_{i \in \X}, \{\tilde{x}_i\}_{i \in \X})$ and $(\{y_i\}_{i \in \X}, \{\tilde{y}_i\}_{i \in \X})$ are data for $C$.  By Lemma \ref{lem:equivdata}, there exists $\zeta \in \K$ such that $x_i = y_i + \zeta$ and $\tilde{x}_i = \tilde{y}_i + \zeta$ for all $i \in \X$.

\medspace
By Lemma \ref{lem:affinetrans}, $(Y + \zeta I, \tilde{Y} + \zeta I)$ is a Cauchy pair on $V$.  We show that $\phi$ is an isomorphism of Cauchy pairs from $(X, \tilde{X})$ to $(Y + \zeta I, \tilde{Y} + \zeta I)$. It is routinely checked that $\phi X$ and $(Y + \zeta I) \phi$ agree on $\epsilon_i$ for all $i \in \X$, so $\phi X = (Y + \zeta I) \phi$.  It is also routinely checked that $\phi \tilde{X}$ and $(\tilde{Y} + \zeta I)\phi$ agree on $\tilde{\epsilon}^\ast_i$ for all $i \in \X$, so $\phi \tilde{X} = (\tilde{Y} + \zeta I)\phi$. Thus by Definition \ref{def:iso}, $\phi$ is an isomorphism of Cauchy pairs from $(X, \tilde{X})$ to $(Y + \zeta I, \tilde{Y} + \zeta I)$.

\medspace
Therefore $(X, \tilde{X})$ is equivalent to $(Y, \tilde{Y})$ by Definition \ref{def:equivalence}.
\end{proof}

\begin{theorem}
Consider the map that sends a Cauchy pair $(X, \tilde{X})$ on $V$ to the set of Cauchy matrices in ${\rm Mat}_\X(\K)$ associated to $(X, \tilde{X})$. This map induces a bijection from the set of equivalence classes of Cauchy pairs on $V$ to the set of permutation equivalence classes of Cauchy matrices in ${\rm Mat}_\X(\K)$.
\end{theorem}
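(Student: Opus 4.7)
The plan is to verify that the stated map is well-defined on equivalence classes and then check injectivity and surjectivity, using the preceding lemmas as building blocks; essentially every step has been prepared earlier in the paper, so the work is organizational.

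First, I would observe that for any Cauchy pair $(X,\tilde{X})$, Proposition \ref{prop:assoc} guarantees at least one associated Cauchy matrix, and Lemma \ref{lem:permequiv} tells us that the set of Cauchy matrices associated to $(X,\tilde{X})$ is precisely a single permutation equivalence class. So the map can be viewed as sending each Cauchy pair $(X,\tilde{X})$ to the permutation equivalence class of Cauchy matrices associated to it. Next I would check that this map is constant on equivalence classes of Cauchy pairs: if $(X,\tilde{X})$ and $(Y,\tilde{Y})$ are equivalent and $C$ is associated to $(X,\tilde{X})$, then Lemma \ref{lem:equivtoassoc} shows $C$ is also associated to $(Y,\tilde{Y})$; applying the same lemma in the reverse direction (using that equivalence is symmetric, since if $(X,\tilde{X})$ is isomorphic to $(Y+\zeta I,\tilde{Y}+\zeta I)$ then $(Y,\tilde{Y})$ is isomorphic to $(X-\zeta I, \tilde{X}-\zeta I)$) shows the two pairs determine the same set of associated Cauchy matrices. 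Thus the induced map on equivalence classes is well-defined.

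For injectivity, suppose that $(X,\tilde{X})$ and $(Y,\tilde{Y})$ are Cauchy pairs on $V$ whose associated permutation equivalence classes coincide. Then there is a Cauchy matrix $C$ associated to both pairs, and Lemma \ref{lem:assoctoequiv} immediately gives that $(X,\tilde{X})$ and $(Y,\tilde{Y})$ are equivalent, so their equivalence classes coincide.

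For surjectivity, let $[C]$ be a permutation equivalence class of Cauchy matrices in $\text{Mat}_\X(\K)$. Pick any representative $C$. By Theorem \ref{thm:cpfromcm}, there exists a Cauchy pair $(X,\tilde{X})$ on $V$ to which $C$ is associated; the equivalence class of $(X,\tilde{X})$ is then mapped to the permutation equivalence class of $C$, namely $[C]$. Hence the map is surjective.

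There is no real obstacle here: the technical content has all been packaged into Lemmas \ref{lem:permequiv}, \ref{lem:equivtoassoc}, and \ref{lem:assoctoequiv}, and into Theorem \ref{thm:cpfromcm}. The only point requiring any care is making sure that equivalence of Cauchy pairs is genuinely symmetric (so that the map is well-defined in both directions), but this follows at once from Definition \ref{def:equivalence} by replacing $\zeta$ with $-\zeta$ and inverting the isomorphism $\phi$.
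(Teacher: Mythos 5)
Your proposal is correct and follows essentially the same route as the paper's proof: well-definedness via Lemma \ref{lem:equivtoassoc} (together with Lemma \ref{lem:permequiv} to identify the image with a single permutation equivalence class), injectivity via Lemma \ref{lem:assoctoequiv}, and surjectivity via Theorem \ref{thm:cpfromcm}. Your explicit remark that equivalence of Cauchy pairs is symmetric is a small point the paper leaves implicit, but it does not change the argument.
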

\begin{proof} Let $\phi$ denote the map that sends a Cauchy pair $(X, \tilde{X})$ on $V$ to the set of Cauchy matrices in ${\rm Mat}_\X(\K)$ associated to $(X, \tilde{X})$.
By Lemma \ref{lem:permequiv}, $\phi$ is a map from the set of Cauchy pairs on $V$ to the set of permutation equivalence classes of Cauchy matrices in ${\rm Mat}_\X(\K)$. The induced map on the set of equivalence classes of Cauchy pairs on $V$ is well-defined by Lemma \ref{lem:equivtoassoc}, injective by Lemma \ref{lem:assoctoequiv}, and surjective by Theorem \ref{thm:cpfromcm}.
\end{proof}

\section{Acknowledgements}

This paper was written while the author was a graduate student at the University of Wisconsin-Madison. The author would like to thank her advisor, Paul Terwilliger, for offering many valuable ideas and suggestions.

\bibliography{Cauchy_Pairs_and_Cauchy_Matrices}{}
\bibliographystyle{plain}

Alison Gordon Lynch \hfil\break
Department of Mathematics \hfil\break
University of Wisconsin \hfil\break
480 Lincoln Drive \hfil\break
Madison, WI 53706-1325 USA \hfil\break
email: {\tt gordon@math.wisc.edu }\hfil\break
\end{document}